\documentclass[12pt]{article}
\usepackage{amsmath}     
\usepackage{amssymb}     
\usepackage{amsthm}
\usepackage{amsfonts}
\usepackage{comment}
\usepackage{hyperref}
\usepackage[font=small]{caption}
\usepackage{tikz}
\usepackage{rotating}
\usepackage[showonlyrefs]{mathtools}
\mathtoolsset{showonlyrefs}
\numberwithin{equation}{section}
\def\A{\operatorname{av}}
\def\Ai{\operatorname{Ai}}
\def\re{\operatorname{Re}}
\def\im{\operatorname{Im}}
\def\arg{\operatorname{arg}}
\def\N{\mathbb{N}}
\def\Z{\mathbb{Z}}
\def\R{\mathbb{R}}
\def\C{\mathbb{C}}
\def\CC{\widehat{\mathbb{C}}}
\def\D{\mathbb{D}}
\def\F{\mathcal{F}}
\def\O{\mathcal{O}}
\newtheorem{lemma}{Lemma}[section]
\newtheorem{theorem}{Theorem}[section]
\newtheorem{proposition}{Proposition}[section]
\newtheorem{thmx}{Theorem}

\theoremstyle{remark}
\newtheorem{remark}{Remark}[section]
\newtheorem{example}{Example}[section]
\newtheorem*{ack}{Acknowledgment}
\begin{document}  
\title{Meromorphic functions with three radially distributed values}
\author{Walter Bergweiler and Alexandre Eremenko\thanks{Supported by NSF grant DMS-1665115}}
\date{}
\maketitle
\centerline{\emph{Dedicated to the memory of Walter K.\ Hayman}}
\begin{abstract}
We consider transcendental meromorphic functions for which the zeros, $1$-points and
poles are distributed on three distinct rays.
We show that such functions exist if and only if the rays are equally spaced.
We also obtain a normal family analogue of this result.

\medskip

MSC 2020: 30D30, 30D35, 30D45.

\medskip

Keywords: meromorphic function, radially distributed value, normal family.
\end{abstract}
\section{Introduction and results}\label{intro}
Our starting point is the following result.
\begin{thmx} \label{thm-bier-mill}
There is no transcendental entire function for which all zeros lie on one ray and
all $1$-points lie on a different ray.
\end{thmx}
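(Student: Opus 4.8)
The plan is to reduce everything to the behaviour of $f$ in the two sectors cut out by the rays, and then to play a growth estimate coming from omitted values against a counting estimate coming from Nevanlinna theory. After a rotation I would assume the zeros lie on $L_0=\{z:\arg z=0\}$ and the $1$-points on $L_1=\{z:\arg z=\theta\}$ with $\theta\in(0,2\pi)$. The two rays divide $\C$ into open sectors $S_1,S_2$ of openings $\theta$ and $2\pi-\theta$; since every zero and every $1$-point lies on $L_0\cup L_1$, the function $f$ takes neither the value $0$ nor the value $1$ in $S_1\cup S_2$. Equivalently, $R=f/(f-1)$ maps each $S_j$ into $\C\setminus\{0,1\}$, because $R$ omits $1$ identically and has its zeros and poles confined to $L_0$ and $L_1$. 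Thus the whole burden is carried by a function omitting two values in each sector.

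The analytic engine is that omission of two values forces normality. On each $S_j$ the map $f$ is normal, so the hyperbolic metric of $\C\setminus\{0,1\}$ — equivalently, lifting $f$ through the modular covering $\lambda\colon\mathbb{H}\to\C\setminus\{0,1\}$ and then into $\D$ — yields the Lehto--Virtanen bound $f^\#(z)=O\!\left(1/\operatorname{dist}(z,\partial S_j)\right)=O(1/|z|)$ on any subsector interior to $S_j$. Applying Phragm\'en--Lindel\"of to the resulting bounded lift, I would deduce that in each sector $\log^+|f|$ grows at most polynomially, so that $f$ has finite order $\rho$, and that along rays interior to the sectors $f$ tends to one of the asymptotic values $0,1,\infty$.

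Finally I would extract a contradiction from Nevanlinna's Second Main Theorem. Since $f$ is entire, $N(r,\infty,f)=0$, so
\[
 T(r,f)\le N(r,0,f)+N(r,1,f)+S(r,f),
\]
and hence the zeros and $1$-points, all confined to the two rays, must account for essentially the entire characteristic. On the other hand the sectorial growth bound, together with the Denjoy--Carleman--Ahlfors estimate that the number of disjoint tracts over $\infty$ is at most $2\rho$, limits how much $f$ can grow and therefore how many zeros and $1$-points it can carry on those rays, forcing $T(r,f)=O(\log r)$ and so $f$ rational, contrary to transcendence. The main obstacle I anticipate is making the Phragm\'en--Lindel\"of step rigorous in the wide sector, whose opening may exceed $\pi$, and then carrying out the bookkeeping that reconciles the abundance of $a$-points demanded on the rays by the Second Main Theorem with the scarcity of growth permitted by the omitted values in the sector interiors; controlling the boundary behaviour of $f$ on the rays themselves, where the zeros and $1$-points accumulate, is the delicate point.
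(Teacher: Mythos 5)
Your reduction to the two open sectors and the normality/hyperbolic-metric estimate there are sound: in any subsector strictly interior to $S_j$ one indeed gets $f^\#(z)=O(1/|z|)$, and hence $\log\log|f(z)|=O(\log|z|)$, because $f$ omits $0$, $1$ and $\infty$ there. But the next step --- Phragm\'en--Lindel\"of to conclude that $\log^+|f|$ grows polynomially in each \emph{closed} sector, hence that $f$ has finite order --- is a genuine gap, and it is exactly the one you flagged without closing. Your estimates degenerate near the boundary rays $L_0$ and $L_1$, where the zeros and $1$-points accumulate: in a narrow sector around $L_0$ the function omits only the value $1$ (and near $L_1$ only the value $0$), and a single omitted value gives no growth control at all. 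Phragm\'en--Lindel\"of cannot bridge this, because it runs in the opposite direction: it needs bounds on the boundary of a region \emph{plus} an a priori growth restriction inside in order to produce bounds inside, whereas you have bounds in the interior subsectors and no a priori control whatsoever in the narrow sectors containing the rays, where $f$ could a priori have infinite order. (Compare $e^{e^z}$, which is bounded on the rays $\arg z=\pm 3\pi/4$ yet grows with infinite order between them; boundary bounds alone conclude nothing.) The statement you are trying to wave through --- zeros and $1$-points on finitely many rays imply finite order --- is precisely Edrei's theorem \cite{Edrei1955}, which the paper cites for exactly this purpose, and its proof is a substantial piece of work, not a corollary of Schottky/normality plus Phragm\'en--Lindel\"of.

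There is a second gap in the endgame: even granting finite order $\rho$, the claim that the Second Main Theorem together with the Denjoy--Carleman--Ahlfors theorem ``forces $T(r,f)=O(\log r)$'' is an assertion without a mechanism. Nothing you have said prevents $f$ from carrying on the order of $r^{\rho}$ zeros on $L_0$ and as many $1$-points on $L_1$, which is exactly what the Second Main Theorem would then be balanced against; ruling this out is the actual content of the Biernacki--Milloux theorem \cite{Biernacki1929,Milloux1927} in the finite-order case, and the known proofs (see also \cite{BEH}) require a genuine argument comparing the behaviour of $\log|f|$ across the rays, for instance via harmonic measure or via the dichotomy of Lemma~\ref{lemma2} (along a line of non-normality, $f\to 0$ on one side and $f\to\infty$ on the other). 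For comparison, the paper does not reprove Theorem~\ref{thm-bier-mill} at all: it quotes Biernacki--Milloux for finite order and Edrei's theorem to remove the order restriction, and it notes that Theorem~\ref{thm-bier-mill} also follows from Theorem~\ref{thm-norm} applied to the family $\{f(rz)\colon r>0\}$, which fails to be normal somewhere in $\C\setminus\{0\}$ for every transcendental entire~$f$. Your opening moves are in the spirit of that second route, but each of the two bridges above is a theorem-sized hole rather than a technical detail.
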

This was proved by Biernacki \cite[p.~533]{Biernacki1929} and Milloux~\cite{Milloux1927}
for functions of finite order; see also~\cite{BEH}.
The restriction on the order can be omitted
by a later result of  Edrei~\cite{Edrei1955}. This result yields that
if all zeros and $1$-points of an entire function $f$ lie on finitely many rays,
then $f$ has finite order.

The following normal family analogue of Theorem~\ref{thm-bier-mill} was proved 
in~\cite[Theorem~1.1]{BE2019}.
Here $\D$ denotes the unit disk.
\begin{thmx}\label{thm-norm}
Let $L_0$ and $L_1$ be two distinct rays emanating from the origin and
let $\F$ be the family of all functions holomorphic in $\D$ for which all zeros
lie on $L_0$ and all $1$-points lie on $L_1$.
Then $\F$ is normal in $\D\backslash\{0\}$.
\end{thmx}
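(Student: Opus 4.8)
The plan is to argue by contradiction via Zalcman's rescaling lemma. Suppose $\F$ fails to be normal at some $z_0\in\D\setminus\{0\}$. Then there exist $f_n\in\F$, points $z_n\to z_0$, and scales $\rho_n\to 0^+$ such that $g_n(\zeta):=f_n(z_n+\rho_n\zeta)$ converges locally uniformly in the spherical metric on $\C$ to a nonconstant limit $g$. Since the $f_n$, hence the $g_n$, are holomorphic, the spherical limit is either holomorphic or identically $\infty$; as $g$ is nonconstant, $g$ is a nonconstant entire function. The first task is to read off the value distribution of $g$ from the hypotheses using Hurwitz's theorem: the zeros of $g_n$ lie on $\{\zeta: z_n+\rho_n\zeta\in L_0\}$ and its $1$-points on $\{\zeta: z_n+\rho_n\zeta\in L_1\}$, each being the preimage of a ray under an affine map and hence itself a ray in the $\zeta$-plane.

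Next I would analyze how these rays behave as $\rho_n\to 0$. Because $z_0\neq 0$, near $z_0$ each ray looks like a straight line, so on every compact set the rescaled rays either escape to infinity or converge to a line parallel to the corresponding $L_j$, the endpoint of the ray being pushed to $\infty$. This gives three cases. If $z_0$ lies on neither $L_0$ nor $L_1$, both rescaled rays escape, so $g$ has neither zeros nor $1$-points; an entire function omitting $0$ and $1$ is constant by Picard's theorem, contradicting nonconstancy. If $z_0$ lies on exactly one ray, say in $L_0\setminus L_1$, then the $1$-points escape while the zeros persist, so $g$ omits the value $1$ and has all its zeros on a line $\ell_0$ parallel to $L_0$. (The remaining possibility, that $z_0$ is the common endpoint $0$, is exactly the case not claimed: there both rescaled rays survive as genuine rays, and $g$ is a nonconstant entire function with zeros on one ray and $1$-points on another. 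Theorem~\ref{thm-bier-mill} forbids this only in the transcendental case, so $g$ could be a polynomial such as $g(\zeta)=\zeta$, and non-normality at the origin may indeed occur — which is why the origin must be excluded.)

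The main obstacle is the single-ray case, where rescaling alone does not close the argument: writing $g=1+e^{h}$ from the omitted value $1$, the zeros of $g$ can lie on a full line, as $g(\zeta)=1+e^{a\zeta}$ shows, so no contradiction is visible at the level of the limit. The point I would exploit is that this line structure is a local artifact of rescaling at an \emph{interior} point of $L_0$, whereas the zeros of each $f_n$ are globally confined to a single ray emanating from the origin, with no $1$-points off $L_1$. To turn the line information back into ray information I would refine the choice in Zalcman's lemma so that the centers $z_n$ lie on $L_0$, then track the half-plane bounded by $\ell_0$ on which $g$ tends to $1$ and compare it with the position of the omitted ray $L_1$ and of the origin; the goal is to show that the one-sided accumulation of zeros of $f_n$ along $L_0$ cannot coexist with the total absence of $1$-points near $z_0$. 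Concretely, I expect this to reduce to a local, quantitative form of the Biernacki–Milloux mechanism underlying Theorem~\ref{thm-bier-mill} — that zeros crowded on one ray force $1$-points to spread off any second ray — applied to $g$ or to a further rescaling of it. This is the step I expect to be genuinely hard, and the one where the ray geometry, rather than mere line geometry, must be decisive.
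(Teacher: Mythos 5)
Your reduction is fine as far as it goes, but it stops exactly where the theorem begins. Off the rays, normality is immediate from Montel's theorem (no rescaling needed: near such a point every $f\in\F$ omits both $0$ and $1$), so your Picard argument there is correct but overelaborate. The entire content of Theorem~\ref{thm-norm} is normality at the points of $(L_0\cup L_1)\setminus\{0\}$, and there your argument produces, as you yourself note, a nonconstant entire limit $g$ omitting $1$ with all zeros on a line --- which is no contradiction at all, since $g(\zeta)=1+e^{a\zeta}$ is exactly such a function. This is not a technical loose end that a cleverer choice of $z_n$ can patch: Zalcman rescaling at an interior point of $L_0$ necessarily pushes the endpoint of the ray (the origin) and the second ray $L_1$ off to infinity, so the limit retains only line geometry, and every piece of information distinguishing the hypothesis from the genuinely realizable line configuration is erased in the limit. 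Your proposed repair (centers on $L_0$, ``tracking the half-plane,'' a ``quantitative Biernacki--Milloux mechanism'') is a restatement of the difficulty rather than an argument; nothing in the proposal shows how the global position of $L_1$ and of the origin re-enters after the rescaling has destroyed them. So the case that constitutes the theorem is missing.

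For comparison, the proof this paper appeals to (in \cite{BE2019}, whose key ingredients reappear here as Lemma~\ref{lemma2} and Lemma~\ref{lemma1}, and in the proof of Theorem~\ref{theorem1}) does not take a spherical limit of the $f_k$ at all. One first proves a dichotomy by harmonic-measure arguments: if the family is not normal at a point of a line carrying the $1$-points of zero-free functions, then along a subsequence $f_k\to 0$ on one side of the line and $f_k\to\infty$ on the other --- a statement about the functions themselves near the line, not about a rescaled limit. One then studies the normalized logarithms $u_k=\log|f_k|/\log|f_k(\xi)|$, shows via Harnack and two-constant estimates that they are locally bounded and converge to harmonic limits, and derives a contradiction from the reflection (anti-)symmetry these harmonic limits must have across the rays. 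The mechanism is thus potential theory applied to $\log|f_k|$ on the whole punctured disk, where both rays and the origin remain visible, rather than value distribution of a local rescaling limit; that difference is precisely why the published argument closes and the rescaling framework, by itself, cannot.
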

The purpose of this paper is to consider analogues of these results for meromorphic
functions, with poles being distributed on some further ray. First we note that 
there exist meromorphic functions for which zeros, $1$-points and poles lie on
three distinct rays.
Such a function is given by the following example.
Recall here that the Airy function $\Ai$ is an entire function which satisfies the 
differential equation $\Ai''(z)=z\Ai(z)$; see, e.g.,~\cite[\S 9.2]{DLMF}.
\begin{example}\label{example1}
Let
\begin{equation} \label{ex1}
f(z)=e^{\pi i/3}\frac{\Ai(e^{2\pi i/3}z)}{\Ai(e^{-2\pi i/3}z)} .
\end{equation}
Then all zeros of $f$ are on the ray $\{re^{i\pi/3}\colon r>0\}$, all poles of $f$ are on
$\{re^{-i\pi/3}\colon r>0\}$ and all $1$-points of $f$ are on the negative real axis.
\end{example}
We will verify at the beginning of Section~\ref{sec-proof1} that the function $f$ defined 
in Example~\ref{example1} has the properties stated there.

We note that in Example~\ref{example1} the rays are equally spaced.
If the rays are not equally spaced, then we have analogues of Theorems~\ref{thm-bier-mill}
and~\ref{thm-norm}.
\begin{theorem}\label{theorem2}
Let $L_0$, $L_1$ and $L_\infty$ be three distinct rays emanating from the origin. 
If the rays are not equally spaced, then 
there is no transcendental meromorphic 
function for which all but finitely many zeros lie on $L_0$, all but finitely 
many $1$-points lie on $L_1$ and all but finitely many poles lie on $L_\infty$.
\end{theorem}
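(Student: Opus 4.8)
The plan is to argue by contradiction: assume a transcendental meromorphic $f$ exists with all but finitely many zeros on $L_0$, $1$-points on $L_1$ and poles on $L_\infty$, and deduce that the three rays must be equally spaced. First I would reduce to finite order: since the zeros, $1$-points and poles lie on finitely many rays, the meromorphic analogue of the finite-order conclusion quoted after Theorem~\ref{thm-bier-mill} (an Edrei-type argument) gives that $f$ has finite order $\rho$, and $\rho>0$ because $f$ is transcendental. Let $S_1,S_2,S_3$ be the three open sectors cut out by the rays, with openings $\alpha_1,\alpha_2,\alpha_3$ satisfying $\alpha_1+\alpha_2+\alpha_3=2\pi$. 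After discarding a bounded disc and the finitely many exceptional points, $f$ omits $0$, $1$ and $\infty$ on each $S_j$, so $\log|f|$ is harmonic there. I would then show that the indicator $h(\theta)=\limsup_{r\to\infty}r^{-\rho}\log|f(re^{i\theta})|$ has constant sign on each open sector, so that in $S_j$ the function $f$ tends to a single value $a_j\in\{0,1,\infty\}$; accordingly exactly one of the positive harmonic functions $\log|f|$, $\log|1/f|$, $\log|1/(f-1)|$ describes the behaviour in $S_j$.

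The core estimate is a two-sided Phragmén--Lindelöf bound for a sector. Mapping $S_j$ conformally to the upper half-plane by $\zeta=z^{\pi/\alpha_j}$ and using that a positive harmonic function in a half-plane grows at most linearly (Herglotz), one gets that the sectorial order of $f$ is at most $\pi/\alpha_j$. In the reverse direction, if the relevant potential is $o(r^\rho)$ on \emph{both} rays bounding $S_j$ and has sectorial order $\rho$, then the Herglotz representation forces it to be a multiple of the first harmonic $r^{\pi/\alpha_j}\sin(\pi(\theta-\theta_j)/\alpha_j)$, so its order is exactly $\pi/\alpha_j$; hence $\alpha_j=\pi/\rho$. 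Granting these regularity inputs in every sector, summing $\alpha_1+\alpha_2+\alpha_3=2\pi$ yields $3\pi/\rho=2\pi$, i.e. $\rho=3/2$ and $\alpha_1=\alpha_2=\alpha_3=2\pi/3$. This is the desired contradiction with the hypothesis that the rays are not equally spaced.

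The main obstacle is establishing exactly the regularity inputs used above, namely that $f$ has sectorial order $\rho$ in \emph{each} sector and that $\log|f|$ and $\log|f-1|$ are $o(r^\rho)$ on the three rays, i.e. that their indicators vanish at $\beta_0$, $\beta_1$ and $\beta_\infty$. These facts are not free: on each ray $f$ attains its value infinitely often and is highly oscillatory, and the indicator on, say, the zero ray need not vanish for an arbitrary canonical product. To control this I would use that the canonical products formed from the zeros and from the poles each have all zeros on a single ray with power-law density, hence are of completely regular growth, so that $\log|f|$ and $\log|f-1|$ possess genuine indicators that are $\rho$-trigonometric on the arcs between their respective critical rays, with corners of prescribed sign (upward at a ray carrying zeros, downward at the pole ray) determined by the angular densities.

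The mechanism that pins the indicators down is the identity $h^+=h_1^+$, where $h_1$ is the indicator of $\log|f-1|$: from $|f|\le|f-1|+1$ one gets $h\le h_1^+$, and symmetrically $h_1\le h^+$, whence $h^+=h_1^+$, so $h$ and $h_1$ agree wherever $f\to\infty$. Matching the trigonometric pieces of $h$ and $h_1$ across the corner rays, and using continuity together with the fixed signs of the corners, should force the auxiliary trigonometric function $h_1-h$ to vanish at all three rays and exhibit them as consecutive zeros of a single sinusoid, which is precisely the statement that the indicators vanish at $\beta_0,\beta_1,\beta_\infty$ and that every sectorial order equals $\rho$. I expect this matching argument---and the bookkeeping needed to exclude sectors in which $f$ fails to tend to a single value---to be the hardest and most technical part of the proof, with the Phragmén--Lindelöf pinch of the second paragraph being comparatively routine once it is in place.
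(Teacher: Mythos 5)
Your overall strategy (indicators, a Phragm\'en--Lindel\"of pinch in each sector, and the count $3\pi/\rho=2\pi$) is a genuinely different route from the paper's, and it captures the right heuristic picture; but as written it has two gaps that are not technicalities. The first is the reduction to positive order. Even granting the Edrei-type reduction to finite order, your assertion that $\rho>0$ ``because $f$ is transcendental'' is false: there exist transcendental meromorphic functions of order zero, and among them are precisely Ostrowski's Julia-exceptional (normal) functions, for which the family $\{f(rz)\colon r>0\}$ is normal in $\C\setminus\{0\}$ and every rescaling or indicator argument degenerates --- with respect to any positive order there are no nontrivial sinusoids to match. This is not a corner case: it is exactly the case the paper isolates in Proposition~\ref{proposition1}, which rules out order $<1$ by an entirely different mechanism, namely writing $f=f(0)\Pi_0/\Pi_\infty$ and $f-1=C\Pi_1/\Pi_\infty$ as quotients of canonical products and contradicting the linear relation $f(0)\Pi_0=C\Pi_1+\Pi_\infty$ via an averaging argument about the arguments of Taylor coefficients (Lemmas~\ref{lemma3}--\ref{lemma5}, \ref{lemma13} and~\ref{lemma15}). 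Your proposal contains no substitute for this step; for order $\geq 1$ the paper then simply quotes Ostrowski to get non-normality of $\{f(rz)\}$ and concludes with Theorem~\ref{theorem1}.

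The second gap is that the regularity inputs you rely on are assumed rather than proved, and they are essentially equivalent to what must be shown. Zeros lying on a single ray give no control whatsoever on their density: the counting function may be lacunary or otherwise irregular, in which case the canonical product is \emph{not} of completely regular growth and need not possess a $\rho$-trigonometric indicator, so ``power-law density, hence completely regular growth'' is not available as a hypothesis. The same applies to the claims that $f$ tends to a single value in each open sector and that the indicators of $\log|f|$ and $\log|f-1|$ vanish on the three rays: you correctly identify these as the crux, but the mechanism you propose ($h^+=h_1^+$ together with prescribed corner signs) already presupposes indicators with piecewise-sinusoidal structure, which is circular. The paper's proof of Theorem~\ref{theorem1} is built to avoid exactly this: it never analyzes $f$ at infinity directly, but only subsequential limits of the rescalings $f(r_kz)$ (via the normal-family Lemma~\ref{lemma2} and a harmonic-measure estimate), producing limit harmonic functions $u$, $v$, $w$ glued into a single function $h$ on a double cover; the equality of the three opening angles then follows from the reflection anti-symmetry $h(\sigma_L(z))=-h(z)$, with no a priori regularity of the zero distribution needed. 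Until you can either treat the order-zero/normal-function case and justify the completely-regular-growth and single-limit-per-sector claims, the proposal does not constitute a proof.
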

\begin{theorem}\label{theorem1}
Let $L_0$, $L_1$ and $L_\infty$ be three distinct rays emanating from the origin and let 
$0\leq r<R\leq \infty$. Let
$\F$ be the family of all functions meromorphic in $\{z\in\C\colon r<|z|<R\}$ for which 
all zeros are on $L_0$, all $1$-points are on $L_1$ and all poles are on~$L_\infty$.

Then $\F$ is normal if and only if the rays are not equally spaced.
\end{theorem}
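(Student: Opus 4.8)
The plan is to prove the two implications separately and to localize the whole question to the three rays. The first observation is that in each of the three open sectors bounded by $L_0$, $L_1$ and $L_\infty$ every $f\in\F$ omits the three values $0$, $1$ and $\infty$, so by Montel's fundamental normality criterion $\F$ is automatically normal on the annulus minus the three rays. Thus only points lying on a ray can be points of non-normality, and the entire difficulty is concentrated there. A second structural remark, used throughout, is the scaling symmetry: multiplication by a positive constant fixes each ray, so $f\in\F$ forces $f(\tau z)\in\F$ for $\tau>0$ (on the correspondingly scaled annulus). Non-normality is therefore a scale-invariant phenomenon along each ray, a fact that becomes transparent after the logarithmic change of variable used below.

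For the ``only if'' direction I would exhibit an explicit non-normal sequence. Suppose $L_0$, $L_1$ and $L_\infty$ are equally spaced. After a rotation, and after possibly replacing $f$ by $1/f$ to fix the cyclic orientation, the function $f$ of Example~\ref{example1} has its zeros on $L_0$, its $1$-points on $L_1$ and its poles on $L_\infty$. Put $f_k(z)=f(t_kz)$ with $t_k\to\infty$; then $f_k\in\F$ because positive scalings preserve the rays. Using the asymptotics of the Airy zeros, the number of zeros of $f$ on $L_0$ in an annulus $\{t_k\rho_1\le|z|\le t_k\rho_2\}$ grows like $t_k^{3/2}$, and likewise for the poles on $L_\infty$. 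Hence in any fixed open subannulus $U$ of $A$ both the zeros and the poles of $f_k$ become dense as $k\to\infty$. If some subsequence of $(f_k)$ converged spherically and locally uniformly on $U$ to a limit $F$, then the accumulating zeros would, by Hurwitz's theorem, force $F\equiv 0$, while the accumulating poles would force $F\equiv\infty$ on the same connected set $U$, a contradiction. Therefore $\F$ is not normal.

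The ``if'' direction is the heart of the matter, and the plan is to reduce it to Theorem~\ref{theorem2}. Assuming $\F$ is not normal while the rays are not equally spaced, I would extract from a non-normal sequence a rescaled limit that is a \emph{transcendental} meromorphic function whose zeros, $1$-points and poles still lie on $L_0$, $L_1$ and $L_\infty$ respectively; Theorem~\ref{theorem2} then yields the contradiction. The main obstacle is that the naive rescaling fails. By the localization above, non-normality occurs at an interior point $z_0$ of some ray, say $z_0\in L_0$, and a small disk about $z_0$ meets only $L_0$; hence a standard Zalcman blow-up at $z_0$, with scales tending to $0$, sends $L_1$ and $L_\infty$ off to infinity and yields only an entire limit of the harmless form $1+e^{a\zeta+b}$, which contradicts nothing. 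To retain all three rays one must rescale at the macroscopic, radial scale rather than with a vanishing one. Here I would pass to the logarithmic variable $w=\log z$, in which the rays become three horizontal lines, the members of $\F$ become $2\pi i$-periodic, and the radial scaling symmetry becomes translation in the real direction. The task is then to use this translation structure to produce a nonconstant limit that sees all three value-lines, and to ensure that it corresponds, back in the $z$-plane, to a genuinely transcendental function on $\C$ (or one with at most finitely many exceptional points, so that Theorem~\ref{theorem2} applies). Carrying out this radial rescaling---upgrading the convergence guaranteed by Montel in the sectors to convergence across the lines, and excluding degeneration of the limit to a constant or to a function omitting one of the three values---is the step I expect to be genuinely delicate.
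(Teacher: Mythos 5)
Your ``only if'' half is correct and coincides with the paper's: rotate the Airy-quotient of Example~\ref{example1} (replacing $f$ by $1/f$ if needed), dilate by $k$, and observe non-normality; your Hurwitz/zero-counting argument just fills in what the paper dismisses as ``easy to see.'' The ``if'' half, however, contains a genuine gap, and that half is the substance of the theorem.

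Two concrete problems. First, your plan is circular relative to the logical structure of the paper: you propose to deduce Theorem~\ref{theorem1} from Theorem~\ref{theorem2}, but the paper proves Theorem~\ref{theorem2} \emph{from} Theorem~\ref{theorem1} (via Proposition~\ref{proposition1} and Ostrowski's theorem that normal functions have order zero). Since you supply no independent proof of Theorem~\ref{theorem2} -- and a direct one must somehow dispose of Julia exceptional (normal) functions, which is exactly the difficulty Proposition~\ref{proposition1} exists to handle -- the reduction proves nothing. Second, and more fundamentally, the extraction step you defer as ``genuinely delicate'' is not merely delicate but cannot be carried out in the stated generality. The family $\F$ lives on a \emph{fixed} annulus $\{z\colon r<|z|<R\}$, which may be bounded; for $f\in\F$ and $\tau\neq 1$ the function $z\mapsto f(\tau z)$ belongs to a family on a \emph{different} annulus, so $\F$ has no scaling (equivalently, no translation-in-$\log z$) invariance whatsoever, and a non-normal sequence $(f_k)\subset\F$ consists of unrelated functions to which no rescaling can be applied. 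The only limits actually available are degenerate: by Lemma~\ref{lemma2} one gets $f_k\to 0$, $1$, $\infty$ in the three sectors as in~\eqref{a9a}, and, as you yourself observe, Zalcman limits at ray points are entire functions omitting two values. So no transcendental meromorphic function on $\C$ with radially distributed values can be produced this way.

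What replaces your missing step in the paper is a different mechanism altogether: from~\eqref{a9a} one forms the normalized logarithms $u_k=\log|f_k|/\log|f_k(\xi)|$, $v_k=\log|f_k-1|/\log|f_k(\xi)|$ and $w_k=u_k-v_k$, proves local boundedness by the Harnack-type Lemma~\ref{lemma1}, passes to harmonic limits $u$, $v$, $w$ on the three overlapping regions $T_1$, $T_0$, $T_\infty$, and glues them, after pulling back by $z\mapsto z^2$, into a single harmonic function $h$ as in~\eqref{a18}. This $h$ is positive and negative on alternating sectors and satisfies $h(\sigma_L(z))=-h(z)$ for the reflection $\sigma_L$ in each of the six preimage rays; the reflection principle then forces all six sectors to have equal opening, i.e.\ the original rays are equally spaced. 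This harmonic gluing argument -- not a rescaled limit function -- is the idea your proposal is missing.
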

One can deduce Theorem~\ref{thm-bier-mill} from Theorem~\ref{thm-norm} by
considering the family $\{f(rz)\colon r>0\}$.
Given any transcendental entire function~$f$, 
this family is not normal at some point in $\C\setminus\{0\}$.
This approach does not suffice to deduce 
Theorem~\ref{theorem2} from Theorem~\ref{theorem1}, 
since there are meromorphic functions $f$ for which the family $\{f(rz)\colon r>0\}$
is normal in $\C\setminus\{0\}$.
Such functions were called \emph{Julia exceptional functions} by
Ostrowski~\cite[Kapitel II]{Ostrowski1925} who studied them in detail. 
They are also called \emph{normal functions}.
Lehto and Virtanen~\cite{Lehto1957} introduced this terminology for functions meromorphic in
a domain~$G$, but in  the case that $G=\C\setminus\{0\}$ it reduces to the 
property stated above; 
see also~\cite{Eremenko2007,Eremenko2010} for a discussion of normal functions.

The differential equation satisfied by the Airy function 
implies that the function $f$ given by~\eqref{ex1}
satisfies the differential equation
\begin{equation} \label{i1}
S(f)(z)=-2z,
\end{equation}
where 
\begin{equation} \label{i2}
S(f)=\left(\frac{f''}{f'}\right)'- \frac12 \left(\frac{f''}{f'}\right)^2
=\left(\frac{f'''}{f'}\right)'- \frac32 \left(\frac{f''}{f'}\right)^2
\end{equation}
denotes the Schwarzian derivative.

The following result says that  -- in some sense --
all meromorphic functions for which zeros, $1$-points and poles are distributed 
on three rays are related to the function of Example~\ref{example1}.
\begin{theorem}\label{theorem3}
Let $L^1$, $L^2$ and $L^3$ be three equally spaced rays and let $f$ be a
transcendental meromorphic function. Then there exist distinct values  $a_1$, $a_2$
and $a_3$ such that all but finitely many $a_j$-points are on $L^j$ if
and only if 
\begin{equation} \label{i3}
S(f)(z)=e^{3\theta i} z R(z^3),
\end{equation}
where $\theta$ is the argument of one of the rays $L^j$ and $R$ is a real
rational function satisfying $0 < R(\infty) < \infty$.
\end{theorem}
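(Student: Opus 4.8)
The plan is to exploit the invariance of the Schwarzian derivative under post-composition with Möbius transformations, $S(M\circ f)=S(f)$. Applying the Möbius transformation that sends $a_1,a_2,a_3$ to $0,1,\infty$ alters neither the hypothesis on the distribution of the $a_j$-points nor the asserted form of $S(f)$, so I may assume that the zeros of $f$ lie on $L^1$, the $1$-points on $L^2$ and the poles on $L^3$. Writing $\omega=e^{2\pi i/3}$, equal spacing means that $L^j$ has argument $\theta+2\pi(j-1)/3$, and the configuration of rays carries two symmetries: the rotation $z\mapsto\omega z$, which cyclically permutes the rays, and the reflection in the axis $\arg z=\theta$, which fixes $L^1$ and interchanges $L^2$ and $L^3$. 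Setting $P=\tfrac12 S(f)$ and writing $f=w_1/w_2$ as a quotient of linearly independent solutions of $w''+Pw=0$, the zeros, poles and $1$-points of $f$ become the zeros of $w_1$, $w_2$ and $w_1-w_2$, so the problem turns into one about three solutions of a linear differential equation whose zeros are confined to three equally spaced rays.

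For the ``if'' direction I would begin from $S(f)(z)=e^{3i\theta}zR(z^3)$. The algebraic identity $S(f)(z)=zG(z^3)$ is equivalent to the functional equation $S(f)(\omega z)=\omega^{-2}S(f)(z)$, which says exactly that $z\mapsto\omega z$ transforms $w''+Pw=0$ into itself up to scaling of $w$ and hence cyclically permutes a distinguished triple of solutions; the reality of $R$ supplies the corresponding reflection symmetry. Since $0<R(\infty)<\infty$, one has $P(z)\sim cz$ as $z\to\infty$, so the solutions have order $3/2$ and a Liouville--Green (WKB) analysis shows that their zeros accumulate only along the three anti-Stokes rays, which the normalization places at $L^1,L^2,L^3$. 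The two exact symmetries then force the zeros of $w_1$ onto $L^1$, those of $w_2$ onto $L^3$ and those of $w_1-w_2$ onto $L^2$, up to the finitely many zeros produced by the poles of $R$; this is the same computation by which Example~\ref{example1} is verified.

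For the converse I would first establish that $f$ has finite order. In each of the three open sectors of opening $2\pi/3$ the function $f$ omits $0,1,\infty$ apart from finitely many points, so Montel's theorem bounds the spherical derivative by $f^\#(z)\le C/d(z)$, where $d(z)$ is the distance from $z$ to the two bounding rays; integrating this estimate (the mechanism behind the finite-order statement attributed to Edrei in the introduction) yields finite order, and in fact order $3/2$. One then shows that $S(f)$ is rational: it is holomorphic away from the multiple $a_j$-points, of which there are only finitely many, and the finite order together with a Phragmén--Lindelöf argument in the three sectors rules out an exponential factor. Finally the anti-Stokes structure of $w''+Pw=0$ identifies the form of $P$: for rational $P$ with $P(z)\sim cz^m$ the zeros of solutions cluster along $m+2$ equally spaced rays, and confining the zeros of three independent solutions to three rays forces $m=1$, while the rotation and reflection equivariance of the configuration organizes the lower-order terms into $S(f)(z)=e^{3i\theta}zR(z^3)$ with $R$ real and $0<R(\infty)<\infty$.

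The main obstacle is this last, converse, half. The delicate point is that $f$ and its rotate $f(\omega z)$ carry their three values on the same rays but not at the same points, so one cannot simply declare them Möbius-equivalent and read off $S(f)(\omega z)=\omega^{-2}S(f)(z)$; the functional equation must instead be extracted from a uniqueness property of the asymptotic problem, namely that a rational Schwarzian is determined by the anti-Stokes rays carrying the zeros of its solution triple. Turning the asymptotic clustering of zeros into their exact confinement to the rays, excluding spiralling anti-Stokes lines and an exponential factor in $S(f)$, and controlling the finitely many multiple points, is where I expect the real analytic work to lie.
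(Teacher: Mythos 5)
Your proposal has the right global architecture (reduce to the values $0,1,\infty$, pass to $w''+Pw=0$, prove $S(f)$ rational via finite order, then do asymptotic/symmetry analysis), but both halves break down at exactly the points where the paper does its hardest work. In the direction ``rays $\Rightarrow$ \eqref{i3}'', your finite-order argument does not work: Montel's theorem gives $f^\#(z)\le C/d(z)$ only on compact subsets of the open sectors, the function $1/d(z)^2$ is not integrable up to the bounding rays, so this estimate cannot bound the Ahlfors--Shimizu characteristic --- and all the points you must count lie precisely on the rays, where the estimate says nothing. The paper obtains the order (exactly $3/2$) by a much heavier mechanism: Proposition~\ref{proposition1} gives order $\geq 1$; the harmonic-limit machinery from the proof of Theorem~\ref{theorem1} together with Lemma~\ref{lemma12} yields $\log f(r_kz)\sim c_k z^{3/2}$ along any non-normal rescaling sequence, hence $n(tr_k)-A_k\sim B_kt^{3/2}$; and the Drasin--Shea theorem on P\'olya peaks (Lemma~\ref{lemma6}) converts these local asymptotics into $\rho_*=\rho^*=3/2$. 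Your rationality step has a second gap: $S(f)$ has poles at \emph{all} critical points of $f$, not only at multiple $a_j$-points (a zero of $f'$ need not be an $a_j$-point), and finiteness of the critical set is not free --- the paper proves it by differentiating the asymptotics $\log f(r_kz)\sim c_kz^{3/2}$; rationality then follows from the lemma on the logarithmic derivative, not from Phragm\'en--Lindel\"of. Finally, your proposed mechanism for organizing the lower-order terms --- that a rational Schwarzian is ``determined by the anti-Stokes rays of its solution triple'' --- is false as stated, since the rays only see the top-degree term of $Q$. The paper instead uses Gundersen's theorem (Lemma~\ref{lemma8}): infinitely many $a$-points of $f$ on the line containing $L^2$ force $Q$ real, the same lemma applied to $f(\omega z)$ forces $\omega^2Q(\omega z)$ real, and these two reality statements annihilate every Laurent coefficient of index $\not\equiv 1\pmod 3$, giving $Q(z)=-zR(z^3)$ with $R$ real.

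In the direction ``\eqref{i3} $\Rightarrow$ rays'', the sentence ``the two exact symmetries then force the zeros of $w_1$ onto $L^1$\dots'' is a genuine gap, which you half-acknowledge at the end. The reflection symmetry $1/f(z)=\overline{f(\overline z)}$ (which itself needs an argument --- the paper derives it from the uniqueness clause of Lemma~\ref{lemma11} by matching asymptotic values) only shows that the $1$-points come in conjugate pairs; it does not exclude conjugate pairs off the axis, and the Liouville--Green clustering gives ``within angle $\varepsilon$ of the ray'', never ``on the ray''. The paper's decisive tool here is Lemma~\ref{lemma14}, Lommel's integral identity for $u''+Au=0$, which shows that a solution real on the real axis has only real zeros in a suitable sector beyond some point $x_1$. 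Your appeal to ``the same computation by which Example~\ref{example1} is verified'' is circular: that verification rests on the known fact that the Airy zeros are negative, which is itself a Lommel-type theorem, not a consequence of symmetry.
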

If $L$ is a linear fractional transformation, then $S(L\circ f)=S(f)$.
One may choose $L$ such that $a_1$, $a_2$ and $a_3$ are mapped to $0$, $1$ and~$\infty$.

The rational functions $Q$ for which the equation $S(f)=Q$ has a meromorphic solution $f$ 
have been classified by Elfving~\cite[Kapitel IV]{Elfving1934};
see also~\cite{Cui2016}, \cite[Theorem~ 6.7]{Laine1993} and~\cite{Laine1986}.

An example of 
a rational function $R$ with poles such that~\eqref{i3} has a solution $f$
for which all (and not only all but finitely many) zeros, $1$-points and poles are on the rays
will be given in Remark~\ref{Rpoles}.

Nevanlinna~\cite{Nevanlinna1966} raised the following interpolation problem:
Given points $c_1,\dots,c_q$ on the Riemann sphere and $q$ sequences 
$(z_{1,k})_{k\in\N},\dots, (z_{q,k})_{k\in\N}$ in $\C$,
when does there exists a meromorphic function $f$ such that the $c_j$-values 
are precisely the points  $z_{j,k}$?
For $q=2$ such a function exists by the Weierstra{\ss} factorization theorem,
so the problem addresses the case that $q\geq 3$.
Theorems~\ref{theorem2} and~\ref{theorem3} may also be considered as a 
contribution to this problem of Nevanlinna.

\begin{ack}
We thank Fedor Nazarov for suggesting the proof of Proposition~\ref{proposition1}.
We also thank four referees and the editor, David Drasin, for many helpful suggestions.
\end{ack}

\section{Proof of Theorem~\ref{theorem1}}\label{sec-proof1}
As we will use Example~\ref{ex1} in one direction of the proof, we begin
by verifying the properties of this example.
\begin{proof}[Verification of Example~\ref{ex1}]
The zeros of the Airy function are all negative~\cite[\S 9.9]{DLMF}.
This implies that all zeros of $f$ are on $\{re^{i\pi/3}\colon r>0\}$
and  all poles of $f$ are on $\{re^{-i\pi/3}\colon r>0\}$.
By~\cite[Equation 9.2.12]{DLMF} we have
\begin{equation} \label{a1}
\Ai(z)+e^{-2\pi i/3} \Ai(e^{-2\pi i/3}z) +e^{2\pi i/3} \Ai(e^{2\pi i/3}z)=0.
\end{equation}
This implies that 
\begin{equation} \label{a1a}
\begin{aligned}
f(z)-1
&=\frac{e^{\pi i/3}\Ai(e^{2\pi i/3}z)-\Ai(e^{-2\pi i/3}z)}{\Ai(e^{-2\pi i/3}z)}
\\ &
=e^{-\pi i/3}\frac{e^{2 \pi i/3}\Ai(e^{2\pi i/3}z)+e^{-2\pi i/3}\Ai(e^{-2\pi i/3}z)}{\Ai(e^{-2\pi i/3}z)}
\\ &
=-e^{-\pi i/3}\frac{\Ai(z)}{\Ai(e^{-2\pi i/3}z)}.
\end{aligned}
\end{equation}
Hence the $1$-points of $f$ are all negative.
\end{proof}

Let $\overline{D}(a,r)$ denote the closed disk of radius $r$ around a point~$a$.
The following result was proved in~\cite[Theorem~1.3]{BE2019} 
and~\cite[Proposition~1.1]{BE2020}.
\begin{lemma}\label{lemma2}
Let $D$ be a domain and let $L$ be a straight line which divides
$D$ into two subdomains $D^+$ and $D^-$.
Let $\F$ be a family of functions holomorphic in $D$ which do not have zeros
in $D$ and for which all $1$-points lie on~$L$.

Suppose that $\F$ is not normal at $z_0\in D\cap L$ and let $(f_k)$ be a sequence
in $\F$ which does not have a subsequence converging in any neighborhood of~$z_0$.
Suppose that $(f_k|_{D^+})$ converges.
Then either $(f_k|_{D^+})\to 0$ and $(f_k|_{D^-})\to \infty$  or
$(f_k|_{D^+})\to \infty$ and $(f_k|_{D^-})\to 0$.

Let $z_0$ be as before and let $r>0$ with $\overline{D}(z_0,r)\subset D$.
Then for sufficiently large $k$ there exists a $1$-point $a_k$ of $f_k$ such that $a_k\to z_0$ and
if $M_k$ is the line orthogonal to $L$ which intersects $L$ at $a_k$, then
$|f_k(z)|\neq 1$ for all $z\in M_k\cap \overline{D}(z_0,r)\setminus \{a_k\}$.
\end{lemma}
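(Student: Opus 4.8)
The plan is to analyze the two one-sided families separately, establish the dichotomy for their limits, and then extract the required $1$-point by a local analysis at $L$. After an affine change of variable I may assume $L=\R$, that $D$ is a disk centered at $z_0=0$, and that $D^+$, $D^-$ are the upper and lower half-disks. Since every $f_k$ is holomorphic and zero-free on $D$ and omits the value $1$ on $D^+$, the restricted family $(f_k|_{D^+})$ omits $0$ and $1$ and is therefore normal in $D^+$ by Montel's theorem; the same holds on $D^-$. By hypothesis $(f_k|_{D^+})$ converges, and after passing to a subsequence I may assume $(f_k|_{D^-})$ converges as well, say $f_k\to g^+$ on $D^+$ and $f_k\to g^-$ on $D^-$, where each limit is either holomorphic or $\equiv\infty$. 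Hurwitz's theorem shows that a nonconstant finite limit again omits $0$ and $1$.

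Next I would prove the dichotomy $\{g^+,g^-\}=\{0,\infty\}$, with the two constants on opposite sides. The mechanism is \emph{propagation of convergence across $L$}: I claim that if a one-sided limit, say $g^+$, is either nonconstant, or a finite nonzero constant, or if $g^+=g^-$, then $(f_k)$ is in fact convergent at $0$, contradicting the hypothesis. The point is that the only obstruction to normality in a full disk about $0$ is created by the $1$-points, which are confined to $L$; away from the crossing points on $L$ the sequence is locally bounded and bounded away from $0$, so a Vitali/Morera gluing argument, together with the maximum principle applied to $1/f_k$ or $1/(f_k-1)$, extends the common limit holomorphically across $L$. Ruling out each of these cases leaves only $g^+\equiv 0,\ g^-\equiv\infty$ or $g^+\equiv\infty,\ g^-\equiv 0$. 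I expect the gluing across $L$ to be the main obstacle here: one must control the functions near the finitely many points where the boundary circle of a small disk meets $L$, and this is exactly where confinement of the $1$-points to $L$ has to be exploited.

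For the final assertion I would argue as follows, assuming (say) $f_k\to 0$ on $D^+$ and $f_k\to\infty$ on $D^-$. First, $1$-points must accumulate at $0$: if for a subsequence some fixed neighborhood $U$ of $0$ contained no $1$-points, then $f_k|_U$ would omit $0$ and $1$ and hence be normal there, contradicting non-normality at $0$; thus every neighborhood of $0$ contains a $1$-point of $f_k$ for all large $k$, giving $1$-points $a_k\to 0$. It remains to check that the orthogonal line $M_k$ through $a_k$ meets $\{|f_k|=1\}$ only at $a_k$ inside $\overline{D}(0,r)$. On the part of $M_k$ bounded away from $a_k$ this is immediate, since $|f_k|<1$ on compact subsets of $D^+$ and $|f_k|>1$ on compact subsets of $D^-$ for large $k$.

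The delicate point is local. Writing $u_k=\log|f_k|$ (harmonic, with $u_k(a_k)=0$), I must show that the normal derivative of $u_k$ along $M_k$ does not vanish at $a_k$, i.e. that the level set $\{|f_k|=1\}$ is transverse to $M_k$. Since $u_k\to-\infty$ on $D^+$ and $u_k\to+\infty$ on $D^-$, the level set $\{u_k=0\}$ is squeezed into an arbitrarily thin neighborhood of $L$ as $k\to\infty$, hence should be asymptotically parallel to $L$ and therefore transverse to the orthogonal direction of $M_k$. I would make this precise by rescaling $f_k$ at $a_k$ via Zalcman's lemma; the limit is a nonconstant zero-free function whose $1$-points lie on a line, and the relevant model is $e^{a\zeta+b}$, for which $\{|g|=1\}$ is a straight line meeting the orthogonal direction transversally with $|g'|\neq 0$ there. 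This transversality at the $1$-point, together with the gluing step, is the crux of the argument.
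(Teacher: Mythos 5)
The paper does not prove Lemma~\ref{lemma2} at all: it quotes the first assertion from \cite[Theorem~1.3]{BE2019} and the second from \cite[Proposition~1.1]{BE2020}. So your attempt has to be judged on its own, and judged that way it is a plan rather than a proof: the two steps that carry all of the difficulty are exactly the ones you leave open. For the dichotomy, what you actually establish (normality of $(f_k)$ on $D^{\pm}$ by Montel, the Hurwitz restrictions on the limits, the accumulation of $1$-points at $z_0$) is the routine part. Everything hinges on your claim that if $g^+$ is nonconstant, or a finite nonzero constant, or equal to $g^-$, then convergence propagates across $L$; for this you offer only the names of tools, and as described they do not work. The maximum principle cannot be applied on any disk centered at a point of $L$, because its boundary circle meets $L$ at two points near which you have no bound whatsoever on $|f_k|$ (locally uniform convergence on $D^+\cup D^-$ says nothing there); $1/(f_k-1)$ is not even holomorphic near the $1$-points, so the maximum principle does not apply to it; and a Vitali/Morera gluing presupposes precisely the local boundedness near $L$ that is in question. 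You acknowledge this (``the main obstacle''), but an acknowledged obstacle is a gap, and it is exactly where the real proofs invest their work --- in \cite{BE2019}, and likewise in this paper's proof of Lemma~\ref{lemma1}, via harmonic measure and the two-constants theorem, i.e.\ quantitative estimates valid on regions whose boundaries do touch $L$. (A smaller untreated point: the conclusion concerns the full sequence on $D^-$, while you only extract subsequential limits; this is fixable once all such limits are shown to be forced, but it should be said.)

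The second assertion has the same defect in sharper form. What must be proved is a statement at the fixed scale $r$: the whole segment $M_k\cap\overline{D}(z_0,r)$ meets $\{|f_k|=1\}$ only at $a_k$. Your controls live at two incomparable scales: the one-sided convergence handles $\{|\im z|\geq\delta\}$ for each fixed $\delta$ and large $k$, while a Zalcman rescaling describes $f_k$ only in a window of radius $O(\rho_k)$ around the blow-up point, which moreover need not be located at your $a_k$. In the intermediate range --- distance to $L$ tending to $0$ but large compared with $\rho_k$ --- you have no information at all, and the inference ``the level set lies in an arbitrarily thin strip around $L$, hence is asymptotically parallel to $L$, hence crosses $M_k$ only at $a_k$'' is a non sequitur: a curve confined to an arbitrarily thin strip can cross a given orthogonal segment arbitrarily many times, and local transversality at the single point $a_k$ (even granting the exponential model there) excludes none of those crossings. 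Finally, you fix an arbitrary sequence of $1$-points $a_k\to z_0$ and attempt to verify the property for it, whereas the lemma only asserts that \emph{some} $1$-point works; nothing in your sketch shows that every choice works, and a correct argument must both select $a_k$ suitably (for instance by analyzing the level curves of $|f_k|$ separating the region where $f_k\to 0$ from the region where $f_k\to\infty$) and then rule out further intersections of $M_k$ with $\{|f_k|=1\}$ at all scales, not merely in a shrinking neighborhood of $a_k$.
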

The following lemma is a simple consequence of Harnack's inequality for the disk;
see~\cite[Harnack's Inequality,~3.6]{Axler1992}
\begin{lemma}\label{lemma1a}
Let $D\subset\C$ be a domain and let $K\subset D$ be compact.
Then there exists $C>1$ such that for any positive harmonic
function $u\colon D\to \R$ we have
\begin{equation}\label{mc28a}
\max_{z\in K} u(z) \leq C \min_{z\in K} u(z) .
\end{equation}
\end{lemma}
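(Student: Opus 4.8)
The plan is to reduce the statement to the classical Harnack inequality on a single disk and then chain finitely many such estimates together, using the connectedness of $D$ to compensate for the fact that $K$ itself need not be connected.

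First, recall the disk version: if $v$ is positive and harmonic on $D(a,\rho)$, then for $|z-a|\le \rho/2$ one has $\tfrac13 v(a)\le v(z)\le 3v(a)$, so that $v(z)\le 9\,v(w)$ for all $z,w\in\overline{D}(a,\rho/2)$. The constant $9$ is independent of $v$; this is the only analytic input, and it is exactly the content cited from~\cite[3.6]{Axler1992}.

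Next I would prove a pointwise chaining version: if $\Gamma\subset D$ is compact and connected, then there is a constant $C_\Gamma>1$, depending only on $\Gamma$ and $D$, such that every positive harmonic $u\colon D\to\R$ satisfies $u(z)\le C_\Gamma\,u(w)$ for all $z,w\in\Gamma$. To see this, for each $a\in\Gamma$ choose $\rho_a>0$ with $\overline{D}(a,\rho_a)\subset D$ and cover $\Gamma$ by finitely many disks $D(a_1,\rho_{a_1}/2),\dots,D(a_n,\rho_{a_n}/2)$. On each such disk the previous paragraph gives a factor $9$; since $\Gamma$ is connected, any two of its points are joined by a chain of at most $n$ of these overlapping disks, and composing the estimates yields $u(z)/u(w)\le 9^{\,n}$. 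The exponent $n$ depends only on the covering of $\Gamma$, not on $u$, so $C_\Gamma:=9^{\,n}$ works uniformly.

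It remains to enclose $K$ in such a $\Gamma$. Since $D$ is open, compactness of $K$ yields finitely many disks $B_1,\dots,B_m$ with $K\subset\bigcup_j B_j$ and $\overline{B_j}\subset D$. Because $D$ is a domain, hence path-connected, I can join the centers of the $B_j$ by finitely many paths lying in $D$; let $\Gamma$ be the union of the closed disks $\overline{B_j}$ together with these paths. Then $\Gamma$ is compact, connected, and satisfies $K\subset\Gamma\subset D$. Taking $C:=C_\Gamma$ and restricting the chaining estimate on $\Gamma$ to $K$ gives $\max_{z\in K}u(z)\le C\min_{z\in K}u(z)$, as required. The one step to watch is precisely this construction of a connected compact envelope $\Gamma$, which is where the hypothesis that $D$ is connected enters; everything else is a finite composition of the single-disk inequality, with a constant manifestly independent of $u$.
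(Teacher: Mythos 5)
Your proof is correct and takes essentially the same route as the paper: the paper gives no written proof at all, stating only that the lemma is ``a simple consequence of Harnack's inequality for the disk'' (with a reference to Axler--Bourdon--Ramey), and your argument --- chaining the disk inequality $u(z)\le 9\,u(w)$ over a finite cover of a connected compact envelope $\Gamma\supset K$ --- is exactly the standard way to carry out that deduction. Both the constant $9^{\,n}$ and the construction of the connected compact set $\Gamma$ (using that a domain is path-connected) are sound, so there is nothing to correct.
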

\begin{lemma}\label{lemma1}
Let $D$ be a domain and let $L$ be a straight line.
Let $\xi\in D\setminus L$ and let $K$ be a compact subset of~$D$.
Then there exists $C>0$ such that if $f\colon D\to\C$ is a holomorphic
function satisfying $|f(\xi)|>2$ which has no zeros in $D$ and
for which all $1$-points lie on~$L$
then $\log|f(z)|\leq C \log|f(\xi)|$ for all $z\in K$.
\end{lemma}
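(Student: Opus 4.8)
The plan is to argue by contradiction and to reduce the assertion to a Harnack estimate for positive harmonic functions, i.e.\ to Lemma~\ref{lemma1a}. Since we may take $C\geq 1$ and since $\log|f(\xi)|>\log 2>0$, the desired inequality is automatic at any $z\in K$ with $|f(z)|\leq 1$; so only points with $|f(z)|>1$ matter. Suppose the conclusion fails. Then there are functions $f_n$ as in the statement and points $z_n\in K$ with $\log|f_n(z_n)|>n\log|f_n(\xi)|\geq n\log 2$; in particular $|f_n(z_n)|\to\infty$, and after passing to a subsequence we may assume $z_n\to z^*\in K$. On every component of $D\setminus L$ the functions $f_n$ omit the values $0$, $1$ and $\infty$, so by Montel's theorem together with a diagonal argument I would pass to a further subsequence converging locally uniformly on $D\setminus L$, on each component either to a holomorphic function or to~$\infty$.

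The main point is the following renormalisation. Write $u_n=\log|f_n|$, which is harmonic on $D$ because $f_n$ has no zeros, and set $m_n=u_n(\xi)=\log|f_n(\xi)|>\log 2$. Suppose first that $z^*$ lies in the same component $G$ of $D\setminus L$ as $\xi$. The blow-up $|f_n(z_n)|\to\infty$ with $z_n\to z^*$ forces the limit on $G$ to be $\infty$, so $f_n\to\infty$ locally uniformly on $G$ and $m_n\to\infty$. On a compact connected set $K'\subset G$ containing $\xi$ and $z^*$ in its interior we then have $u_n>0$ for large $n$, so $U_n:=u_n/m_n$ is a positive harmonic function on a neighbourhood of $K'$ with $U_n(\xi)=1$. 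By Lemma~\ref{lemma1a} there is a constant $C'$, depending only on $K'$, with $\max_{K'}U_n\leq C'\min_{K'}U_n\leq C'U_n(\xi)=C'$. For large $n$ we have $z_n\in K'$ and $U_n(z_n)=u_n(z_n)/m_n>n$, which contradicts this bound once $n>C'$. This is the mechanism that should yield the \emph{linear} (rather than merely Schottky-type) dependence claimed in the lemma: dividing by $m_n$ turns the hypothesis into a statement about a positive harmonic function, to which Harnack's inequality applies.

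It remains to treat the cases in which $z^*$ does not lie in the component of $\xi$, and this is where Lemma~\ref{lemma2} enters. If $z^*$ lies in a different component of $D\setminus L$, then $f_n\to\infty$ there, while the dichotomy in Lemma~\ref{lemma2}, applied at a point of $L$ separating this component from that of $\xi$, forces $f_n\to 0$ on the side of $\xi$; this gives $|f_n(\xi)|\to 0$, contradicting $|f_n(\xi)|>2$. The genuinely delicate case, and the step I expect to be the main obstacle, is $z^*\in L$. Here the family is not normal at $z^*$, so Lemma~\ref{lemma2} provides both the side dichotomy ($f_n\to 0$ on one side and $f_n\to\infty$ on the other) and $1$-points $a_n\to z^*$ together with the orthogonal segments $M_n$ on which $|f_n|\neq 1$. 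If $\xi$ lies on the $0$-side we again contradict $|f_n(\xi)|>2$; if it lies on the $\infty$-side, then $m_n\to\infty$, but now $\xi$ sits in the interior while $z_n\to z^*\in\partial G$, so the Harnack argument above cannot be applied directly.

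The plan in that last case is to exploit the behaviour near the $1$-points: since $f_n(a_n)=1$, one has $|f_n|\approx 1$ on a neighbourhood of $a_n$, and the aim is to show that $z_n$ is trapped in such a neighbourhood, which contradicts $|f_n(z_n)|\to\infty$. Making this quantitative---controlling the scale on which $|f_n|$ stays near $1$ about $a_n$ against the rate at which $z_n\to z^*$, using the orthogonal segments $M_n$ from Lemma~\ref{lemma2}---is the crux of the argument, and is the part I would expect to require the most care.
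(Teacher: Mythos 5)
Your renormalisation $U_n=\log|f_n|/\log|f_n(\xi)|$ and the Harnack mechanism are exactly right, and they coincide with the first half of the paper's proof; the easy cases are handled correctly. But the proposal has a genuine gap precisely at the case you yourself flag as the crux, $z^*\in L$, and the plan you sketch for it would not work. First, non-normality at $z^*$ does not follow from $|f_n(z_n)|\to\infty$: the sequence could tend to $\infty$ locally uniformly near $z^*$ (that sub-case must instead be killed by Harnack applied \emph{across} $L$, which is legitimate because $\log|f_n|$ is harmonic on all of $D$ -- only zeros matter, not $1$-points). Second, and more seriously, in the genuinely non-normal situation there is no definite scale on which $|f_n|\approx 1$ around the $1$-point $a_n$: non-normality means exactly that $|f_n|$ passes from arbitrarily large to arbitrarily small values inside arbitrarily small neighbourhoods of points of $L$, so $z_n$ cannot be ``trapped'' in a region where $|f_n|$ stays near $1$. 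The missing idea in the paper is a level-curve/harmonic-measure argument. Having shown (via Lemma~\ref{lemma2} and Harnack on the $\infty$-side) that $|f_k(z)|\le|f_k(\xi)|^M$ for $z\in K$ with $\im z\ge\varepsilon$, while $|f_k|<1$ below the line, one uses the maximum principle to produce a curve $\alpha_k$ from $\zeta_k$ to the circle $\{|z|=5\varepsilon\}$ on which $|f_k|\ge|f_k(\zeta_k)|>|f_k(\xi)|^M$; these bounds force $\alpha_k$ into the thin strip $\{|\im z|<\varepsilon\}$. Lemma~\ref{lemma2}, applied at two points of $L$, yields $1$-points $a_k\in(\varepsilon,2\varepsilon)$ and $b_k\in(3\varepsilon,4\varepsilon)$ whose orthogonal lines $M_k,N_k$ carry $|f_k|>1$ above the axis. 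The piece $\beta_k$ of $\alpha_k$ between $M_k$ and $N_k$, together with parts of these lines, horizontal segments at height $\varepsilon$ and a circular arc, bounds a domain $G_k$ containing $2i\varepsilon$ with $|f_k|>1$ on all of $\partial G_k$; a comparison with a fixed domain $H$ gives $\omega(2i\varepsilon,\beta_k,G_k)\ge\omega(2i\varepsilon,\gamma,H)>0$, and the Two-Constant Theorem then yields $M\log|f_k(\xi)|\ge\log|f_k(2i\varepsilon)|\ge\omega(2i\varepsilon,\gamma,H)\log|f_k(\zeta_k)|$, contradicting the blow-up of the ratio. Nothing in your sketch supplies a substitute for this step.

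A secondary gap: in your case of $z^*$ lying in a different component of $D\setminus L$ you invoke the dichotomy of Lemma~\ref{lemma2}, but that lemma presupposes non-normality at a point of $D\cap L$, which you have not established; if the sequence converges to $\infty$ locally uniformly on $D$, Lemma~\ref{lemma2} says nothing (and, in a general domain, two distinct components of $D\setminus L$ may even lie on the same side of $L$, so no ``separating'' point need exist). The clean repair -- and the paper's actual structure -- is to split cases not by the location of $z^*$ but by whether the sequence is normal on all of $D$: if it is, Harnack on a connected compact subset of $D$ containing $\{\xi\}\cup K$ finishes the proof, and if it is not, Lemma~\ref{lemma2} becomes applicable and the harmonic-measure argument above takes over.
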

\begin{proof}
Without loss of generality we may assume that $L=\R$ and $\im \xi>0$.
Suppose that the conclusion is not true.
Then there exists a sequence $(f_k)$ of functions holomorphic in $D$ which satisfy
the hypotheses of the lemma and a sequence $(\zeta_k)$ in $K$ such that 
\begin{equation} \label{a2}
\frac{\log|f_k(\zeta_k)|}{\log |f_k(\xi)|}\to\infty.
\end{equation}
Since the $f_k$ have no zeros and $1$-points in $D\setminus\R$, the sequence $(f_k)$ is
normal in $D\setminus\R$.

Suppose first that the sequence $(f_k)$ is normal in~$D$.
If $|f_k(\xi)|\not\to\infty$, then there exists a subsequence of $(f_k)$ 
which tends to a limit function holomorphic in~$D$. This contradicts~\eqref{a2}.
Thus $|f_k(\xi)|\to\infty$ and hence $f_k|_D\to\infty$.
But then for large $k$ the functions $u_k$ given by 
\begin{equation} \label{a2a}
u_k(z):=\frac{\log|f_k(z)|}{\log|f_k(\xi)|}
\end{equation}
are positive harmonic functions
in some connected neighborhood of~$K$, and a contradiction to~\eqref{a2} is
now obtained from 
Lemma~\ref{lemma1a}.

We may thus assume that $(f_k)$ is not normal in~$D$. In fact, with
\begin{equation} \label{a3}
D^+:=\{z\in D\colon \im z>0\}
\quad\text{and}\quad
D^-:=\{z\in D\colon \im z<0\}
\end{equation}
we may assume $(f_k)$ converges in $D^+$ but that there exists 
$a\in D\cap \R$ such that no subsequence of $(f_k)$ converges in any neighborhood of~$a$.
It follows from Lemma~\ref{lemma2} that 
either $(f_k|_{D^+})\to 0$ and $(f_k|_{D^-})\to \infty$  or
$(f_k|_{D^+})\to \infty$ and $(f_k|_{D^-})\to 0$.
The first possibility is ruled out since we assumed that $\xi\in D^+$ and $|f_k(\xi)|>2$.
Thus
\begin{equation} \label{a3a}
(f_k|_{D^+})\to \infty
\quad\text{and}\quad
(f_k|_{D^-})\to 0.
\end{equation}
We may assume that $\zeta_k\to\zeta_0\in K$.
Lemma~\ref{lemma1a}
implies that the functions $u_k$ given by~\eqref{a2a}
are bounded on any compact subset of~$D^+$,
with a bound depending on this compact subset, but not on~$k$.
Together with~\eqref{a2} this yields that $\zeta_0\in \R$.
Without loss of generality we assume that $\zeta_0=0$.
Choose $\varepsilon>0$ such  that $\overline{D}(0,10\varepsilon)\subset D$.
We may assume that $\overline{D}(0,10\varepsilon)\subset K$.

Put 
\begin{equation} \label{a3b}
K^+_\varepsilon:=\{z\in K\colon \im z\geq \varepsilon\}
\quad\text{and}\quad
K^-_\varepsilon:=\{z\in K\colon \im z\leq -\varepsilon\}.
\end{equation}
Then by 
Lemma~\ref{lemma1a}
the functions $u_k$ are uniformly bounded on $K^+_\varepsilon$.
This means that there exists $M>1$ such that 
$\log |f_k(z)|\leq  M\log|f_k(\xi)|$ and hence
\begin{equation} \label{a6}
|f_k(z)|\leq  |f_k(\xi)|^M
\quad\text{for}\ z\in K^+_\varepsilon.
\end{equation}
By~\eqref{a3a} we also have
\begin{equation} \label{a6a}
|f_k(z)|> 1
\quad\text{for}\ z\in K^+_\varepsilon.
\end{equation}
and
\begin{equation} \label{a6b}
|f_k(z)|< 1<  |f_k(\xi)|^M
\quad\text{for}\ z\in K^-_\varepsilon,
\end{equation}
provided $k$ is large enough.

On the other hand, we have $|f_k(\zeta_k)|>  |f_k(\xi)|^M$  for large $k$
by~\eqref{a2}. Since $\zeta_k\to \zeta_0= 0$ we also have $|\zeta_k|<\varepsilon$
for large~$k$.
By the maximum principle, there exists a curve $\alpha_k$ connecting 
$\zeta_k$ with the circle $\{z\colon |z|=5\varepsilon\}$ such that 
\begin{equation} \label{a6c}
|f_k(z)|\geq |f_k(\zeta_k)|>|f_k(\xi)|^M>1
\quad\text{for}\ z\in\alpha_k.
\end{equation}
It follows from~\eqref{a6}, \eqref{a6b} and~\eqref{a6c} that 
\begin{equation} \label{a6d}
\alpha_k\subset 
\overline{D}(0,5\varepsilon)\setminus( K^+_\varepsilon\cap K^-_\varepsilon)
=\{z\colon |z|\leq 5\varepsilon,\; |\im z|<\varepsilon\}.
\end{equation}
It is no loss of generality to assume that $\alpha_k$ connects $\zeta_k$ with a 
point on the right arc of the boundary of the latter set; that is, 
$\alpha_k$ connects $\zeta_k$ with the
arc $\{z\colon |z|= 5\varepsilon,\; |\im z|<\varepsilon,\; \re z>0\}$;
see Figure~\ref{G_kH}.

\begin{figure}[!htb]
\captionsetup{width=.9\textwidth}
\centering
\begin{tikzpicture}[scale=4.1,>=latex](-1.1,-0.25)(1.1,1.1)
\draw[thin,dashed,->] (-1.1,0) -- (1.1,0);
\draw[thin,dashed,->] (0,-0.25) -- (0,1.05);
\draw[dotted] (1,0) arc (0.:180:1) ;
\draw (-0.5pt,-0.2) -- (0.5pt,-0.2);
\node[right] at (0.5pt,-0.2){$-\varepsilon$};
\foreach \x in {-5,1,2,...,5}
   {
    \draw (0.2*\x,-0.5pt) -- (0.2*\x,0.5pt);
   }
\foreach \x in {-5,2,3,4,5}
   {
    \node[below] at (0.2*\x,-0.5pt) {$\x \varepsilon$};
   }
    \node[below] at (0.2,-0.5pt) {$\varepsilon$};
\filldraw[black] (0.31,0) circle (0.012);
\node[above left] at (0.33,0-0.6pt) {$a_k$};
\filldraw[black] (0.68,0) circle (0.012);
\node[above right] at (0.66,0-0.6pt) {$b_k$};
\draw[-] (-0.98,0.2) -- (0.31,0.2);
\draw[-] (0.68,0.2) -- (0.98,0.2);
\draw[-] (0.68,0.2) -- (0.68,0.11);
\draw[-] (0.31,0.2) -- (0.31,0.12);
\filldraw[black] (0.31,0.12) circle (0.012);
\node[above right] at (0.31cm-0.6pt,0.11cm-0.8pt) {$u_k$};
\filldraw[black] (0.68,0.11) circle (0.012);
\node[above left] at (0.66cm+0.8pt,0.11cm-0.8pt) {$v_k$};
\node[right] at (0.31,1) {$M_k$};
\node[right] at (0.68,1) {$N_k$};
\draw[-,dotted] (0.31,-0.25) -- (0.31,1.05);
\draw[-,dotted] (0.68,-0.25) -- (0.68,1.05);
\draw (0.98,0.2) arc (11.54:168.46:1) ;
\draw [black, thick, densely dotted] plot [smooth, tension=0.5] coordinates { (0.31,0.12) (0.27,0.16) (0.18,0.17) (0.1,0.14) (0.0,0.07) (-0.13,-0.04) };
\draw [black, thick, densely dotted] plot [smooth, tension=0.5] coordinates {(0.68,0.11) (0.73,0.15) (0.82,0.14) (0.9,0.10) (1.00,0.03)};
\filldraw[black] (-0.13,-0.04) circle (0.012);
\node[below] at (-0.13,-0.04) {$\zeta_k$};
\node[above] at (0.08,0.00) {$\alpha_k$};
\draw [black, ultra thick] plot [smooth, tension=0.5] coordinates { (0.31,0.12) (0.4,-0.04) (0.5,-0.05) (0.6,0.02) (0.68,0.11)};
\node[below] at (0.5,-0.05){$\beta_k$};
\draw[thin,dashed,->] (-1.1,-1.4) -- (1.1,-1.4);
\draw[thin,dashed,->] (0,-1.65) -- (0,-0.35);
\draw (-0.5pt,-1.6) -- (0.5pt,-1.6);
\node[right] at (0.5pt,-1.6){$-\varepsilon$};
\foreach \x in {-5,1,2,...,5}
   {
    \draw (0.2*\x,-0.5pt-1.4cm) -- (0.2*\x,0.5pt-1.4cm);
   }
\foreach \x in {-5,4,5}
   {
    \node[below] at (0.2*\x,-1.4cm-0.5pt) {$\x \varepsilon$};
   }
\node[below] at (0.35,-1.4cm-0.5pt) {$2\varepsilon$};
\node[below] at (0.65,-1.4cm-0.5pt) {$3\varepsilon$};
\node[below] at (0.2,-1.4cm-0.5pt) {$\varepsilon$};
\draw[-] (-0.98,0.2-1.4) -- (0.4,0.2-1.4);
\draw[-] (0.6,0.2-1.4) -- (0.98,0.2-1.4);
\draw[-] (0.6,0.2-1.4) -- (0.6,-0.2-1.4);
\draw[-] (0.4,0.2-1.4) -- (0.4,-0.2-1.4);
\draw[ultra thick] (0.4,-0.2-1.4) -- (0.6,-0.2-1.4);
\draw (0.98,0.2-1.4) arc (11.54:168.46:1) ;
\node[below] at (0.5,-1.6){$\gamma$};
\end{tikzpicture}
\caption{The curves $\alpha_k$ and $\beta_k$ and the domains $G_k$ (top) and $H$ (bottom).}
\label{G_kH}
\end{figure}
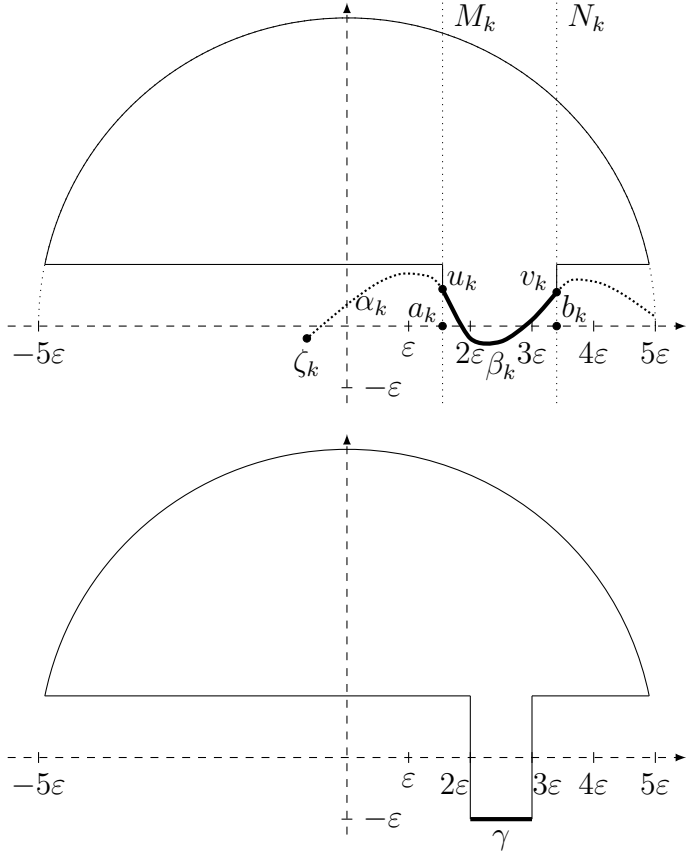

It follows from~\eqref{a3a} that no subsequence of $(f_k)$ is normal at any 
point of $D\cap \R$.
We may thus apply 
Lemma~\ref{lemma2} for any $z_0\in D\cap\R $. Since 
$D\cap\R\supset [-10\varepsilon,10\varepsilon]$ we may, in particular,
choose $z_0=3\varepsilon/2$ or $z_0=7\varepsilon/2$.
Doing so we find that if $k$ is sufficiently large, then 
there exist
$1$-points $a_k$ and $b_k$ of $f_k$ satisfying $\varepsilon<a_k<2\varepsilon$ 
and $3\varepsilon<b_k<4\varepsilon$ 
such that if $M_k$ and $N_k$ are the lines orthogonal to $\R$ which intersect
$\R$ in $a_k$ and $b_k$, respectively, then we have $|f_k(z)|\neq 1$ for 
$z\in M_k\cap\overline{D}(0,10\varepsilon)\setminus \{a_k\}$ and
$z\in N_k\cap\overline{D}(0,10\varepsilon)\setminus \{b_k\}$.
This implies that 
\begin{equation} \label{a6e}
|f_k(z)|>1
\quad\text{for}\ z\in (M_k\cup N_k)\cap \overline{D}(0,10\varepsilon)\cap D^+
\end{equation}
and $|f_k(z)|<1$ for $z\in (M_k\cup N_k)\cap \overline{D}(0,10\varepsilon)\cap D^-$.

The curve $\alpha_k$ intersects both lines $M_k$ and $N_k$. Let $\beta_k$ be the subcurve
of $\alpha_k$ which begins at the last intersection point of $\alpha_k$ with $M_k$ and 
ends at the first intersection point of $\alpha_k$ with $N_k$.
Note that by~\eqref{a6c}
the starting point $u_k$ of $\beta_k$ lies on $M_k\cap \{z\colon 0<\im z<\varepsilon\}$
while the end point $v_k$ lies on $N_k\cap \{z\colon 0<\im z<\varepsilon\}$.

We consider the domain $G_k$ bounded by the arc $\{z\colon |z|=5\varepsilon,\; \im z\geq\varepsilon\}$,
the horizontal line segments 
\begin{equation} \label{a6u}
\{x+i\varepsilon\colon -\sqrt{24}\varepsilon\leq x\leq a_k\}
\quad\text{and}\quad
\{x+i\varepsilon\colon  b_k\leq x\leq\sqrt{24}\varepsilon\},
\end{equation}
the vertical line segments 
\begin{equation} \label{a6v}
\{a_k+iy\colon \im u_k\leq y\leq\varepsilon\}
\quad\text{and}\quad
\{b_k+iy\colon \im v_k\leq y\leq\varepsilon\},
\end{equation}
and the curve $\beta_k$;
see Figure~\ref{G_kH}.

We want to show that the harmonic measure of $\beta_k$ in the domain 
$G_k$ at the point $2i\varepsilon$ is bounded away from~$0$.
This will be done by comparing it 
with the harmonic measure of the line segment
$\gamma:=\{x-i\varepsilon\colon 2\varepsilon\leq x\leq 3\varepsilon\}$ 
in the domain $H$ which is bounded by 
the horizontal line segments~$\gamma$,
\begin{equation} \label{a6x}
\{x+i\varepsilon\colon -\sqrt{24}\varepsilon\leq x\leq 2\varepsilon\}
\quad\text{and}\quad
\{x+i\varepsilon\colon  3\varepsilon\leq x\leq\sqrt{24}\varepsilon\},
\end{equation}
the vertical line segments 
\begin{equation} \label{a6y}
\{2\varepsilon+iy\colon -\varepsilon\leq y\leq\varepsilon\}
\quad\text{and}\quad
\{3\varepsilon+iy\colon -\varepsilon\leq y\leq\varepsilon\},
\end{equation}
and the arc $\{z\colon |z|=5\varepsilon,\; \im z\geq\varepsilon\}$;
see Figure~\ref{G_kH}.

For a bounded domain $G$ and a compact subset $A$ of $\partial G$, 
let $\omega(z,A,G)$ be the harmonic measure of $A$ at a point $z\in G$.
We want to show that 
\begin{equation} \label{a7}
\omega(2i\varepsilon, \gamma,H)\leq \omega(2i\varepsilon, \beta_k,G_k).
\end{equation}
In order to do so we note that (see~\cite[Corollary~ 4.3.9]{Ransford1995})
\begin{equation} \label{a7a}
\omega(z, \beta_k,G_k)
\geq \omega(z, \beta_k\cap \overline{H},G_k)
\geq \omega(z, \beta_k\cap \overline{H},G_k\cap H)
\end{equation}
for all $z\in G_k\cap H$. 

Another standard harmonic measure estimate (see~\cite[Lemma~ 2.6]{BE2020}) yields  that
\begin{equation} \label{a7b}
\omega(z, \beta_k\cap \overline{H},G_k\cap H)
\geq \omega(z, \gamma,H)
\end{equation}
for all $z\in G_k\cap H$. 
Combining this with~\eqref{a7a} we obtain~\eqref{a7}.

By~\eqref{a6a}, \eqref{a6c} and~\eqref{a6e} we have 
\begin{equation} \label{a7c}
|f_k(z)|> 1
\quad\text{for}\ z\in \partial G_k
\end{equation}
and large~$k$.
Since $\beta_k\subset \alpha_k$ we  have 
\begin{equation} \label{a6c1}
|f_k(z)|\geq |f_k(\zeta_k)|
\quad\text{for}\ z\in\beta_k
\end{equation}
by~\eqref{a6c}.
Using the Two-Constant Theorem \cite[Theorem~4.3.7]{Ransford1995}  we deduce
from~\eqref{a7c} and~\eqref{a6c1}  that 
\begin{equation} \label{a8}
\log|f_k(z)|\geq \omega(z,\beta_k,G_k) \log|f_k(\zeta_k)|
\end{equation}
for all $z\in G_k$.

Combining~\eqref{a8} with~\eqref{a6} and~\eqref{a7} we find that
\begin{equation} \label{a9}
\begin{aligned}
M\log|f_k(\xi)|
&\geq \log|f_k(2\varepsilon i)|
\\ &
\geq \omega(2\varepsilon i,\beta_k,G_k) \log|f_k(\zeta_k)|
\\ &
\geq \omega(2\varepsilon i,\gamma,H)  \log|f_k(\zeta_k)|.
\end{aligned}
\end{equation}
This contradicts~\eqref{a2}.
\end{proof}
\begin{proof}[Proof of Theorem~\ref{theorem1}]
Suppose first the the rays $L_0$, $L_1$ and $L_\infty$ are equally spaced.
Let $f$ be the function of Example~\ref{example1}. Then there exists $\theta\in\R$
such that either $g(z):=f(e^{i\theta}z)$ or $g(z):=1/f(e^{i\theta}z)$ defines a 
meromorphic function $g$ for which all zeros are on $L_0$, all $1$-points are 
on $L_1$ and all poles are on~$L_\infty$.
Thus for each $k\in\N$ the function $g_k$ defined by $g_k(z)=g(kz)$ is contained in $\F$.
It is easy to see that $\{g_k\colon k\in\N\}$ is not normal at
any point on any of the rays $L_0$, $L_1$ and $L_\infty$.
Thus $\F$ is not normal there.

Suppose now that $\F$ is not normal.
The rays $L_0$, $L_1$ and $L_\infty$ divide $A:=\{z\colon r<|z|<R\}$ into
three sectors which we denote by $S_0$, $S_1$ and~$S_\infty$.
Here $S_0$ is the sector ``opposite'' to $L_0$; that is, the sector bounded by 
$L_1$ and $L_\infty$. Similarly, $S_1$ and $S_\infty$ are the sectors opposite 
to $L_1$ and $L_\infty$, respectively.

By Montel's theorem, $\F$ is normal in 
$S_0\cup S_1\cup S_\infty =A\setminus(L_0\cup L_1\cup L_\infty)$.
Thus our assumption that $\F$ is not normal 
implies that $\F$ is not normal at some point in $A\cap(L_0\cup L_1\cup L_\infty)$.
Without loss of generality we may assume that $\F$ is not normal at some point
$z_1\in A\cap L_1$.
Let $(f_k)$ be a sequence in $\F$ which does not have a subsequence converging in any
neighborhood of~$z_1$. Since $\F$ is normal in $S_0$ and $S_\infty$, we may assume
that $(f_k)$ converges in $S_0$ and $S_\infty$. Lemma~\ref{lemma2} implies that 
either $(f_k|_{S_0})\to 0$ and $(f_k|_{S_\infty})\to \infty$  or
$(f_k|_{S_0})\to \infty$ and $(f_k|_{S_\infty})\to 0$.

This implies that $(f_k)$ is not normal at some point of $A\cap (L_0\cup L_\infty)$.
Assuming without loss of generality that $(f_k)$ is not normal at some point
$z_0\in A\cap L_0$, we deduce from Lemma~\ref{lemma2},
applied to $1-f_k$ instead of $f_k$,  that
either $(f_k|_{S_1})\to 1$ and $(f_k|_{S_\infty})\to \infty$  or
$(f_k|_{S_1})\to \infty$ and $(f_k|_{S_\infty})\to 1$.
The latter possibility contradicts our previous finding that either 
$(f_k|_{S_\infty})\to \infty$ or $(f_k|_{S_\infty})\to 0$.
Altogether we thus have 
$(f_k|_{S_0})\to 0$, $(f_k|_{S_1})\to 1$ and $(f_k|_{S_\infty})\to \infty$; that is,
\begin{equation} \label{a9a}
f_k(z)\to 
\begin{cases}
0 & \text{for}\ z\in S_0,\\
1 & \text{for}\ z\in S_1,\\
\infty & \text{for}\ z\in S_\infty.
\end{cases}
\end{equation}

Let now $\xi\in S_\infty$. Then $|f_k(\xi)|\to\infty$ as $k\to\infty$.  
Hence we may assume that $|f_k(\xi)|>2$ for all~$k\in\N$.
Lemma~\ref{lemma1} yields that the functions $u_k$ 
defined by
\begin{equation} \label{a10}
u_k(z):=\frac{\log|f_k(z)|}{\log|f_k(\xi)|}
\end{equation}
are locally bounded in $T_1:=S_0\cup S_\infty\cup (L_1\cap A)$.
Note that the $u_k$ are also harmonic in~$T_1$.
Passing to a subsequence if necessary we may thus assume that 
there exists a function $u$ harmonic in $T_1$ such that
\begin{equation} \label{a11}
u_k(z)\to u(z)
\quad\text{for}\ z\in T_1.
\end{equation}
Similarly, put 
\begin{equation} \label{a12}
v_k(z):=\frac{\log|f_k(z)-1|}{\log|f_k(\xi)|}.
\end{equation}
Then the $v_k$ are harmonic in $T_0:=S_1\cup S_\infty\cup (L_0\cap A)$.
Since $|f_k(\xi)|\to\infty$ we have $|f_k(\xi)-1|\to\infty$ and
$\log|f_k(\xi)|\sim \log|f_k(\xi)-1|$ as $k\to\infty$.
Using Lemma~\ref{lemma1} again we see that the functions
\begin{equation} \label{a12a}
z\mapsto \frac{\log|f_k(z)-1|}{\log|f_k(\xi)-1|}
\end{equation}
and hence the $v_k$ are locally bounded in $T_0$.
Passing to a subsequence if necessary we thus find 
that there exists a function $v$ harmonic in $T_0$ such that
\begin{equation} \label{a13}
v_k(z)\to v(z)
\quad\text{for}\ z\in T_0 .
\end{equation}
Moreover,
\begin{equation} \label{a14}
u(z)=v(z)
\quad\text{for}\ z\in T_0\cap T_1=S_\infty.
\end{equation}
We now consider the functions $w_k$ defined by 
\begin{equation} \label{a15}
w_k(z):=u_k(z)-v_k(z)=
\frac{1}{\log|f_k(\xi)|} \cdot \log\left|\frac{f_k(z)}{f_k(z)-1}\right|.
\end{equation}
These functions $w_k$ are harmonic in 
$T_\infty:=S_0\cup S_1\cup (L_\infty\cap A)$.
We have $w_k\to u$ in $S_0$ and $w_k\to -v$ in $S_1$. 
It follows that there is a function $w$ harmonic in $T_\infty$ such that 
\begin{equation} \label{a16}
w_k(z)\to w(z)
\quad\text{for}\ z\in T_\infty
\end{equation}
and
\begin{equation} \label{a17}
w(z)=
\begin{cases}
u(z) & \text{for}\ z\in S_0,\\
-v(z) & \text{for}\ z\in S_1.
\end{cases}
\end{equation}
Let now $S_\infty'$ and $S_\infty''$ be the two preimages of $S_\infty$ under $z\mapsto z^2$.
Analogously we define $S_0'$, $S_0''$, $S_1'$ and $S_1''$.
We may assume that these sectors are arranged in the cyclic order
$S_\infty',S_0',S_1',S_\infty'', S_0'',S_1''$.

We now define a function $h\colon A\to\R$ as follows:
\begin{equation} \label{a18}
h(z)=
\begin{cases}
v(z^2)=u(z^2) & \text{for}\ z\in S_\infty',\\
u(z^2)=w(z^2) & \text{for}\ z\in S_0', \\
w(z^2)=-v(z^2) & \text{for}\ z\in S_1', \\
-v(z^2)=-u(z^2) & \text{for}\ z\in S_\infty'', \\
-u(z^2)=-w(z^2) & \text{for}\ z\in S_0'', \\
-w(z^2)=v(z^2) & \text{for}\ z\in S_1''.
\end{cases}
\end{equation}
Here the two expressions used in the definition are equal by~\eqref{a14} and~\eqref{a17}.

It follows from~\eqref{a9a} that $u(z)\geq 0$ for $z\in S_\infty$ while
$u(z)\leq 0$ for $z\in S_0$. 
Since $u(\xi)=1$ we see that $u$ is non-constant and thus
$u(z)> 0$ for $z\in S_\infty$ while $u(z)< 0$ for $z\in S_0$. 
Analogously we see that 
$v(z)> 0$ for $z\in S_\infty$ while $v(z)< 0$ for $z\in S_1$. 
This implies that 
\begin{equation} \label{a19}
h(z)
\begin{cases}
>0 & \text{for}\ z\in S_\infty'\cup S_1'\cup S_0'',\\
<0 & \text{for}\ z\in S_0'\cup S_\infty''\cup S_1''.
\end{cases}
\end{equation}
Let $L$ be any ray separating two of the sectors
$S_\infty'$, $S_0'$, $S_1'$, $S_\infty''$, $ S_0''$ and $S_1''$. 
Thus $L$ is one of the preimages of $L_0$, $L_1$ or $L_\infty$
under $z\mapsto z^2$. Let $\sigma_L$ be the reflection in~$L$.
The reflection principle for harmonic functions~\cite[Theorem~ 1.2.9]{Ransford1995}
implies that $h(\sigma_L(z))=-h(z)$. This implies that all sectors 
$S_\infty'$, $S_0'$, $S_1'$, $S_\infty''$, $ S_0''$ and $S_1''$ have 
the same opening angle. 
It follows that $S_0$, $S_1$ or $S_\infty$ all have opening angle $2\pi/3$.
Thus the rays $L_0$, $L_1$ or $L_\infty$ are equally spaced.
\end{proof}
\section{Proof of Theorem~\ref{theorem2}} \label{sec-proof2}
As mentioned in the introduction,
normal functions cannot be dealt with by Theorem~\ref{theorem1}.
The results of Ostrowski~\cite{Ostrowski1925} already mentioned imply 
in particular that normal functions have order~$0$.
The following result actually covers functions of order less than~$1$.
\begin{proposition}\label{proposition1}
Let $L_0$, $L_1$ and $L_\infty$ be three distinct rays emanating from the origin. 
Then there is no transcendental meromorphic function of order less than~$1$
for which all but finitely many zeros lie on $L_0$, all but finitely many $1$-points lie on $L_1$ and all
but finitely many poles lie on $L_\infty$.
\end{proposition}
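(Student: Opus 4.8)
The plan is to cash in the hypothesis ``order less than $1$'' for a genus-$0$ Hadamard factorization, and then to play the argument of $f$ against the argument of $f-1$. First I would argue by contradiction: assume such an $f$ of order $\rho<1$ exists. The finitely many exceptional $a_j$-points affect none of the asymptotic estimates, so I may assume all zeros lie on $L_0$, all $1$-points on $L_1$ and all poles on $L_\infty$ (and work in $\{|z|>r_0\}$). Since $\rho<1$, the zeros, $1$-points and poles each have convergent sum of reciprocal moduli, and the factorization has genus $0$ with \emph{no} exponential factor. Writing $P_0,P_1,P_\infty$ for the genus-$0$ canonical products over the zeros, $1$-points and poles, this gives $f=c\,z^{m}P_0/P_\infty$ and $f-1=c'z^{m'}P_1/P_\infty$ with constants $c,c'$ and integers $m,m'$. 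Having honest products in place of an arbitrary factor $e^{g}$ is exactly what the order hypothesis buys.

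The mechanism I would use rests on the exact identity $\arg f-\arg(f-1)=\arg\frac{f}{f-1}=-\im\log(1-1/f)$, valid off the rays. Where $|f(z)|\ge 2$ the right-hand side is at most $2/(|f(z)|-1)$, so $\arg f$ and $\arg(f-1)$ nearly coincide wherever $|f|$ is large. On the other hand, in the difference of the two factorizations the identical pole terms cancel, leaving $\arg f-\arg(f-1)=\arg R+\operatorname{const}$, where $R:=P_0/P_1$ is a genus-$0$ function with all zeros on $L_0$ and all poles on $L_1$. Thus $\arg R$ is nearly constant on the set $\{|f|\ge 2\}$, which clusters along $L_\infty$ (the poles force $|f|$ to be large there, away from small disks about the poles) and, for a suitable sequence of radii, occupies a definite angular extent. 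The contradiction should come from $\partial_\theta\arg R=r\,\partial_r\log|R|$: on such a region this quantity is, up to a controllable error, a nonzero combination of the angular potentials generated by the two \emph{distinct} rays $L_0$ and $L_1$, whereas ``$\arg R$ nearly constant'' would force it to be essentially zero. An analytic nonconstant $R$ cannot have constant argument on an open set, and $L_0\neq L_1$ is what keeps the two potentials from cancelling.

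The hard part will be the last step: turning ``$\arg R\approx\operatorname{const}$'' into a quantitative impossibility \emph{without} assuming any regularity of the zero/pole distribution, and pinning down that $\{|f|\ge 2\}$ really carries enough angular mass along $L_\infty$. Here I would control $\log|f|$ in the narrow sectors by Phragm\'en--Lindel\"of together with the Harnack- and harmonic-measure estimates already used in Lemmas~\ref{lemma1a} and~\ref{lemma1} and in the proof of Theorem~\ref{theorem1}; the assumption $\rho<1$ is precisely what holds the relevant positive harmonic functions below the critical growth exponent of these sectors, so that boundary control propagates inward. Conceptually, order $<1$ is the right threshold for a Denjoy--Carleman--Ahlfors reason: three asymptotic values $0,1,\infty$ over three distinct direct singularities of $f^{-1}$ would force the order to be at least $3/2$ --- exactly the order of the Airy quotient in Example~\ref{example1} --- and Proposition~\ref{proposition1} records that below order $1$ there is simply no room for the three singularities.
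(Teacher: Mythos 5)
Your opening move coincides with the paper's: order less than $1$ buys a genus-zero factorization with no exponential factor, giving $f=f(0)\,\Pi_0/\Pi_\infty$ and $f-1=C\,\Pi_1/\Pi_\infty$ with canonical products over the zeros, $1$-points and poles. But from this point the paper proceeds in a completely different, purely coefficientwise way: subtracting the two factorizations yields the linear relation $f(0)\Pi_0=C\Pi_1+\Pi_\infty$, and the entire content of the proof is that such a relation is impossible. Indeed, a canonical product with all but finitely many zeros on a ray can be written as $aF(pz)$ with $|p|=1$ and $F$ having Taylor coefficients whose arguments tend to $0$ (Lemma~\ref{lemma15}), and three such functions $F(pz)$, $G(qz)$, $H(rz)$ with distinct unimodular $p,q,r$ are linearly independent (Lemma~\ref{lemma13}): a dependence forces $\alpha_n ap^n+\beta_n bq^n+\gamma_n=0$ for all $n$, while Lemma~\ref{lemma3} produces arbitrarily large $n$ for which the arguments of these three terms lie in an interval of length less than $\pi$, so they cannot sum to zero. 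No geometry of $f$ in the plane is used. You instead pivot to a geometric mechanism comparing $\arg f$ and $\arg(f-1)$ on the set $\{|f|\ge 2\}$, which is a genuinely different route --- but it is not carried through.

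The gap is concrete and, as far as I can see, fatal in the form proposed. Two claims carry all the weight: (i) that $\{|f|\ge 2\}$ ``clusters along $L_\infty$'' and, for suitable radii, ``occupies a definite angular extent''; (ii) that near-constancy of $\arg R$, $R=P_0/P_1$, on such a set is quantitatively impossible. You concede (ii) is ``the hard part,'' but (i) already has no proof and no plausible proof route. Poles make $|f|$ large only in small neighborhoods of themselves; nothing in the hypotheses prevents $\{|f|\ge 2\}$ from being a union of tiny islands around poles that may be very sparse (order $<1$ permits $n(r,\infty)=O(\log^2 r)$), and it is entirely possible that $m(r,f)=O(1)$, i.e.\ that $|f|$ is not large on any substantial arc of any large circle. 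Worse, the proposition allows $f$ to have finitely many poles, or to be entire (``all but finitely many poles'' is then vacuous), in which case $\{|f|\ge2\}$ bears no relation to $L_\infty$ at all and the mechanism collapses. Since the proposition says no such $f$ exists, you cannot read off its behaviour; (i) and (ii) must be derived from the hypotheses alone, and as stated they are of essentially the same depth as the proposition itself. Note also that pairwise distinctness of all three rays must enter somewhere: with $L_1=L_\infty$ the statement is false ($1/\cos\sqrt{z}$ has all $1$-points and poles on the positive axis and no zeros, order $1/2$), and with $L_0=L_1$ it is false ($\cos\sqrt{z}$), yet your sketch visibly uses only $L_0\ne L_1$. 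The auxiliary tools you invoke do not fill the hole: Lemma~\ref{lemma1} governs zero-free functions with $1$-points on a line, which is not the situation of $R$; and the Denjoy--Carleman--Ahlfors remark is only a heuristic, since your hypothetical $f$ is not known to have any asymptotic values, let alone three direct singularities.
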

To prove this proposition, we will use the following lemma.
This lemma may be known, but since we are not aware of any reference, we include 
a detailed proof.
\begin{lemma}\label{lemma3}
Let $a,b,p,q\in\C\setminus\{0\}$ and suppose that
$1$, $p$ and $q$ are distinct. Then there exists
$\delta\in(0,\pi)$ such that for some arbitrarily large $n\in\N$ the points
$1$, $ap^n$ and $bq^n$ lie in a sector opening angle~$\delta$.
\end{lemma}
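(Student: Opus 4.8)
The plan is to discard the moduli and reduce everything to the arguments of the three points, turning the statement into a recurrence question for a linear orbit on a torus. Writing $\alpha=\arg a$, $\beta=\arg p$, $\gamma=\arg b$, $\eta=\arg q$, a sector with vertex at the origin constrains only arguments, so I would track the three points
\[
\theta_1(n)=0,\qquad \theta_2(n)=\alpha+n\beta,\qquad \theta_3(n)=\gamma+n\eta
\]
on the circle $\R/2\pi\Z$. The three points $1$, $ap^n$, $bq^n$ lie in a common sector of opening angle $\delta<\pi$ exactly when $\theta_1(n),\theta_2(n),\theta_3(n)$ all lie in an arc of length $\delta$, and for a given $n$ such a $\delta<\pi$ exists precisely when the largest of the three gaps between consecutive $\theta_j(n)$ exceeds $\pi$. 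Thus the lemma is equivalent to saying that this largest gap exceeds $\pi$ for infinitely many $n$; a single admissible $\delta<\pi$ then serves all of them, since on a fixed compact subset of the relevant configuration space the gap is bounded below by a constant strictly greater than $\pi$.

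Next I would set up the dynamical picture. Consider the orbit $P_n=(\alpha+n\beta,\ \gamma+n\eta)$ in the torus $\mathbb{T}:=(\R/2\pi\Z)^2$. By Kronecker's and Weyl's theorems the closure of $\{P_n\}$ is a coset $x_0+\Gamma$ of the closed subgroup $\Gamma\subset\mathbb{T}$ generated by $(\beta,\eta)$, and $(P_n)$ is equidistributed on this coset for the Haar measure of $\Gamma$. The target set
\[
U=\{(x,y)\in\mathbb{T}\colon\ 0,\,x,\,y\ \text{lie in an arc of length}<\pi\}
\]
is open and nonempty (for instance $(\varepsilon,2\varepsilon)\in U$ for small $\varepsilon>0$). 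The crux is to show that $x_0+\Gamma$ actually meets $U$: once it does, equidistribution — equivalently, recurrence of the orbit inside its closed-subgroup coset — forces $P_n\in U$ for infinitely many $n$, and along such $n$ the largest gap stays bounded below by a fixed constant $>\pi$, producing the required $\delta$.

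This leaves a case analysis according to the dimension of $\Gamma$. If $\Gamma=\mathbb{T}$, the orbit is dense and automatically meets the nonempty open set $U$, so there is nothing to prove. The substantive cases are those with a rational relation among $\beta$, $\eta$ and $2\pi$, where $x_0+\Gamma$ is a finite union of parallel subcircles, or a finite set of points. Here I would parametrize the relevant subcircle (or inspect the finite orbit) and check directly that it cannot be swallowed by the closed set $\mathbb{T}\setminus U$. I expect this degenerate regime to be the main obstacle: the generic equidistributed case is immediate, whereas the rational cases require genuine bookkeeping, and it is exactly here that the hypothesis that $1,p,q$ are distinct has to be invoked to exclude the perfectly symmetric configurations (such as the three arguments remaining frozen at $0,\,2\pi/3,\,4\pi/3$) for which the points stay forever spread out and no admissible $\delta$ can exist. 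The key point to nail down is therefore the implication from ``$1,p,q$ distinct'' to ``the coset $x_0+\Gamma$ enters $U$''.

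As a fallback, should the torus bookkeeping become unwieldy, I would try a more hands-on pigeonhole argument: among the consecutive pairs of arguments one can look for a long run where $\theta_2(n)$ and $\theta_3(n)$ drift to the same side of $0$, which again reduces to controlling the increments $\beta$ and $\eta$ modulo $2\pi$ and leads back to the same dichotomy between the dense and the rational cases.
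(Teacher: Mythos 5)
Your reduction to arguments and the gap criterion is fine, and so is the general framework: the orbit $P_n=(\arg(ap^n),\arg(bq^n))$ closes up to a coset of a closed subgroup $\Gamma$ of $(\R/2\pi\Z)^2$, and equidistribution on that coset upgrades ``the coset meets the open set $U$'' to ``$P_n\in U$ for infinitely many $n$''. The problem is that your proof stops exactly where the lemma begins. In the case $\Gamma=(\R/2\pi\Z)^2$ the hypothesis that $1$, $p$, $q$ are distinct is automatic: if $p=1$, $q=1$ or $p=q$, then the increment $(\arg p,\arg q)$ lies in a proper closed subgroup (an axis circle or the diagonal), so the orbit cannot be dense. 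Hence the dense case, which you handle, is the case with no content, and \emph{all} of the content sits in the degenerate cases (one or two rational relations among $\arg p$, $\arg q$, $2\pi$), which you defer with ``parametrize the subcircle and check directly''. That verification is not routine bookkeeping: one must show, for every rational slope and every offset, that the coset cannot hide inside the closed set where the three points never fit in less than a half-plane, and this is precisely where distinctness must be invoked. For instance, in the configuration $q=\overline{p}$, $b=-\overline{a}$, the orbit lies on the line $\{(t,\pi-t)\}$, which meets $U$ everywhere \emph{except} at the two points $(0,\pi)$ and $(\pi,0)$; to conclude, one has to rule out that the entire orbit sits on those two points, which forces $p=\pm1$ and only then contradicts the distinctness of $1$, $p$, $q$. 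Nothing in your sketch does this, and you acknowledge as much. As it stands, you have a correct plan plus the trivial case, not a proof.

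For contrast, the paper's argument avoids the rational/irrational dichotomy that your framework forces. It sets $x_n=\re(ap^n+bq^n)$, computes the Ces\`aro averages $\A((x_n))=0$ and $\A((x_n^2))$ by elementary trigonometric identities --- a computation insensitive to whether $\arg p$ is a rational multiple of $2\pi$ --- and uses a variance-type inequality (its Lemma 5) to produce arbitrarily large $n$ with $x_n\geq\A((x_n^2))/4>0$; a separate elementary estimate (its Lemma 4) converts a lower bound on $\re(ap^n+bq^n)$ into the desired sector. The average $\A((x_n^2))$ can vanish only in the single symmetric case $q=\overline p$, $b=-\overline a$ described above, which is then dispatched by the reflection argument. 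If you want to complete your route, note that on a finite orbit the natural thing to do is to average over one period, and on a subcircle to average over the circle --- at which point you are redoing the paper's computation coset by coset, i.e.\ more work rather than less.
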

Trivially, there is a half-plane (that is, a sector of opening $\pi/2$) containing~$1$,
$ap^n$ and $bq^n$. The point is that $\delta$ is strictly less than  $\pi/2$.

To prove Lemma~\ref{lemma3}, we will use several other lemmas.
\begin{lemma}\label{lemma4}
Let $A,B\in\partial \D$
 such that $\re (A+B)>0$. 
Then $1$, $A$ and $B$ lie on an arc of $\partial\D$ of length at most 
$\arccos(\re (A+B)-1)$.
\end{lemma}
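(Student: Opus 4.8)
The plan is to reduce the statement to the single sharp trigonometric inequality
\[
\re(A+B)\le 2\cos(L/2),
\]
where $L$ is the length of the shortest arc of $\partial\D$ containing the three points $1$, $A$ and $B$; this rearranges to the bound on the arc (see the last paragraph). Write $A=e^{i\alpha}$ and $B=e^{i\beta}$ with $\alpha,\beta\in(-\pi,\pi]$. I would first record that the shortest arc through three points has two of them as endpoints, the third lying between them, and that its length is $2\pi$ minus the largest of the three angular gaps. The hypothesis $\re(A+B)>0$ forces $L\le\pi$: otherwise the origin would lie inside the triangle with vertices $1,A,B$, which (after reflecting in $\R$, an operation fixing $1$ and changing neither $\re(A+B)$ nor $L$) forces $\alpha>0>\beta$ with $\alpha-\beta>\pi$, whence by the product identity below $\re(A+B)=2\cos\tfrac{\alpha+\beta}{2}\cos\tfrac{\alpha-\beta}{2}<0$ because $\tfrac{\alpha-\beta}{2}>\pi/2$ while $\tfrac{\alpha+\beta}{2}\in(-\pi/2,\pi/2)$. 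Thus $\cos(L/2)\ge 0$, and the inequalities below do not reverse.

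The argument then splits into three cases according to which of $1$, $A$, $B$ is the interior point of the shortest arc. The main (and extremal) case is when $1$ is interior; after the reflection above I may take $\beta<0<\alpha$, so that the endpoints are $A$ and $B$, the centre direction is $\mu=\tfrac{\alpha+\beta}{2}$, and $L=\alpha-\beta$. The key computation is the sum-to-product identity
\[
\re(A+B)=\cos\alpha+\cos\beta=2\cos\mu\,\cos\tfrac{L}{2}.
\]
Since $\mu\in(-\pi/2,\pi/2)$ gives $\cos\mu\in(0,1]$, this yields $\re(A+B)\le 2\cos(L/2)$, with equality exactly in the conjugate configuration $\beta=-\alpha$ (which is therefore the sharp case).

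In the two remaining cases one of $A$, $B$ — say $A$ — is interior, so $1$ and $B$ are the endpoints and $L=|\beta|$, with $\alpha$ between $0$ and $\beta$. Then $\cos\alpha\le 1$, and the double-angle formula $\cos\beta=2\cos^2\tfrac{L}{2}-1$ give
\[
\re(A+B)=\cos\alpha+\cos\beta\le 1+\bigl(2\cos^2\tfrac{L}{2}-1\bigr)=2\cos^2\tfrac{L}{2}\le 2\cos\tfrac{L}{2},
\]
using $0\le\cos\tfrac{L}{2}\le 1$. The case with $B$ interior is identical by the symmetry $A\leftrightarrow B$. In every case $\re(A+B)\le 2\cos(L/2)$.

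Finally, because $L\le\pi$ the map $\psi\mapsto\cos(\psi/2)$ is decreasing and invertible on $[0,\pi]$, so the inequality rearranges to $L\le 2\arccos\!\bigl(\tfrac12\re(A+B)\bigr)$; equivalently, the three points lie within angular distance $\arccos\!\bigl(\tfrac12\re(A+B)\bigr)$ of the common direction $\mu$, and the stated bound follows from the elementary comparison $\arccos\!\bigl(\tfrac12\re(A+B)\bigr)\le\arccos\bigl(\re(A+B)-1\bigr)$, valid since $\tfrac12\re(A+B)\ge\re(A+B)-1$ for $\re(A+B)\le 2$. I expect the only delicate point to be the preliminary step $L\le\pi$ together with the bookkeeping that keeps $\cos(L/2)\ge 0$ throughout, since without the hypothesis $\re(A+B)>0$ the origin could fall inside the triangle and reverse the estimates; once that sign is pinned down, the casework and the two trigonometric identities finish the proof routinely.
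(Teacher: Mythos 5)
Your argument is correct, and in fact sharp, up to the inequality $\re(A+B)\le 2\cos(L/2)$, equivalently $L\le 2\arccos\bigl(\tfrac12\re(A+B)\bigr)$: the preliminary reduction $L\le\pi$, the three-case analysis, and the two trigonometric estimates are all sound. The genuine gap is the very last step. Knowing that the three points lie within angular distance $\arccos\bigl(\tfrac12\re(A+B)\bigr)$ of the direction $\mu$ places them on an arc of length $2\arccos\bigl(\tfrac12\re(A+B)\bigr)$, and combining this with your (correct) comparison $\arccos\bigl(\tfrac12\re(A+B)\bigr)\le\arccos(\re(A+B)-1)$ yields only $L\le 2\arccos(\re(A+B)-1)$ --- twice the stated bound. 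The factor $2$ cannot be talked away: $2\arccos(s/2)\ge\arccos(s-1)$ for all $s\in(0,2]$ (with equality only at $s=2$), so no monotonicity argument will convert your bound into the stated one.

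Moreover, this gap is unfixable, because the statement you were asked to prove is false, and your intermediate inequality is the correct sharp replacement. Take $A=e^{i\pi/3}$, $B=e^{-i\pi/3}$: then $\re(A+B)=1>0$, but the shortest arc of $\partial\D$ containing $1$, $A$, $B$ has length $2\pi/3$, whereas $\arccos(\re(A+B)-1)=\arccos(0)=\pi/2<2\pi/3$. Your bound gives exactly $2\arccos(1/2)=2\pi/3$, so this symmetric configuration shows your estimate is attained. The paper's own proof commits the mirror image of your error: it correctly derives $\re(A+B)\ge 1+\cos\ell$, where $\ell$ is the length of the arc through $1$, $A$, $B$, i.e.\ $\cos\ell\le\re(A+B)-1$; but since $\arccos$ is decreasing this gives $\ell\ge\arccos(\re(A+B)-1)$, a lower bound on $\ell$, and the paper's ``the conclusion follows'' reverses the inequality. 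Fortunately, the only place Lemma~\ref{lemma4} is used (the proof of Lemma~\ref{lemma3}, which feeds into Lemma~\ref{lemma13} and Proposition~\ref{proposition1}) requires only that $\re(A+B)\ge c>0$ force the three points onto an arc of length at most a constant $\delta<\pi$; your bound supplies $\delta=2\arccos(c/2)<\pi$, so the paper's results survive once Lemma~\ref{lemma4} and the value of $\delta$ are restated with $2\arccos\bigl(\tfrac12\re(A+B)\bigr)$ in place of $\arccos(\re(A+B)-1)$. In short: you did not prove the statement, no one can, but you proved the statement that should stand in its place.
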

\begin{proof}
The hypothesis implies that $A\neq -1$ and $B\neq -1$.
We may assume that $\im B\geq 0$, since otherwise we can pass to the 
complex conjugates of $A$ and~$B$. We may thus 
write $A=e^{i\alpha}$ and $B=e^{i\beta}$ with $\alpha\in (-\pi,\pi)$
and $\beta\in [0,\pi)$. Then
\begin{equation} \label{c5a}
\re (A+B)=\cos\alpha+\cos\beta=2\cos\frac{\alpha+\beta}{2} \cos\frac{\beta-\alpha}{2}.
\end{equation}
Suppose first that $\alpha<0$. Then $-\pi<\alpha+\beta<\pi$ and thus
$\cos((\alpha+\beta)/2)>0$. Hence $\cos((\beta-\alpha)/2)>0$ so that
$0\leq|\alpha+\beta|\leq\beta-\alpha<\pi$.
Hence
\begin{equation} \label{c5b}
\re (A+B)\geq 2\cos^2\frac{\beta-\alpha}{2}=1+\cos(\beta-\alpha).
\end{equation}
As the arc on $\partial \D$
which connects $A$ with $B$ and contains $1$ has length $\beta-\alpha$, the
conclusion follows.

Suppose now that $\alpha\geq 0$. We may suppose that $\alpha\leq \beta$.
Then there is an arc on $\partial \D$ of 
length $\beta$ which contains $1$, $A$ and~$B$.
Now~\eqref{c5a} yields that
\begin{equation} \label{c5c}
\re (A+B)\geq 2\cos^2\frac{\alpha+\beta}{2}=1+\cos(\alpha+\beta).
\end{equation}
Thus $\beta\leq\alpha+\beta\leq \arccos(\re (A+B)-1)$, and 
again the conclusion follows.
\end{proof}
For a sequence $(z_n)_{n\in\N}$ in $\C$, we define the \emph{average} 
\begin{equation} \label{c6}
\A((z_n)_{n\in\N}) :=\lim_{n\to\infty}\frac{1}{n}\sum_{k=1}^nz_k,
\end{equation}
provided that the limit exists.
For $c\in\C$ and $\xi\in\partial \D\setminus\{1\}$ we have 
\begin{equation} \label{c7}
\A((c\xi^n)_{n\in\N}) =\lim_{n\to\infty}\frac{1}{n}c\sum_{k=1}^n\xi^n
=\lim_{n\to\infty}\frac{1}{n}c\,\frac{\xi-\xi^{n+1}}{1-\xi}=0.
\end{equation}
Taking the real part yields for $c=e^{i\gamma}$ and $\xi=e^{i\tau}$ that
\begin{equation} \label{c8}
\A((\cos(\gamma+n\tau))_{n\in\N}) =
\begin{cases}
0 & \text{if}\ \tau \not\equiv 0\pmod {2\pi},\\
\cos\gamma & \text{if}\ \tau \equiv 0\pmod {2\pi}.\\
\end{cases}
\end{equation}
We will use the following lemma.
\begin{lemma}\label{lemma5}
Let $(x_n)_{n\in\N}$ be a bounded real sequence
satisfying $\A((x_n))=0$.
If $\A((x_n^2))$ exists, then
\begin{equation} \label{c8b}
\limsup_{n\to\infty} x_n \cdot \limsup_{n\to\infty} |x_n| \geq \A((x_n^2)).
\end{equation}
\end{lemma}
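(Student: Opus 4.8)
The plan is to reduce everything to the single inequality $\A((x_n^2))\le -L\ell$, where $L:=\limsup_{n\to\infty}x_n$ and $\ell:=\liminf_{n\to\infty}x_n$ are both finite by boundedness. First I would record two sign facts forced by $\A((x_n))=0$: if we had $L<0$, then $x_n<L/2<0$ for all large $n$, so the Cesàro mean of $(x_n)$ would be negative, a contradiction; hence $L\ge 0$, and symmetrically $\ell\le 0$. I would also use the elementary identity $\limsup_{n\to\infty}|x_n|=\max(L,-\ell)$, which follows from $\limsup_{n\to\infty}\max(a_n,b_n)=\max(\limsup_{n\to\infty}a_n,\limsup_{n\to\infty}b_n)$ applied to $a_n=x_n$, $b_n=-x_n$.

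The heart of the argument is a quadratic comparison. Fix $\varepsilon>0$. By definition of $L$ and $\ell$ there is $N$ such that $\ell-\varepsilon<x_n<L+\varepsilon$ for all $n\ge N$, whence $(x_n-(\ell-\varepsilon))(x_n-(L+\varepsilon))\le 0$, that is,
\[
x_n^2\le (L+\ell)x_n-L\ell+\varepsilon(L-\ell)+\varepsilon^2\qquad(n\ge N).
\]
I would then form the deficiency $y_n:=x_n^2-(L+\ell)x_n+L\ell-\varepsilon(L-\ell)-\varepsilon^2$, which is $\le 0$ for $n\ge N$, so that $\frac1m\sum_{k=1}^m y_k\le\frac1m\sum_{k=1}^{N-1}y_k\to 0$ as $m\to\infty$. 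Since $\A((x_n))=0$ and $\A((x_n^2))$ exists by hypothesis, the left-hand side converges to $\A((x_n^2))+L\ell-\varepsilon(L-\ell)-\varepsilon^2$, giving $\A((x_n^2))\le -L\ell+\varepsilon(L-\ell)+\varepsilon^2$. Letting $\varepsilon\to 0$ yields $\A((x_n^2))\le -L\ell$.

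Finally I would close the gap between $-L\ell$ and the asserted product. Because $-\ell\le\max(L,-\ell)=\limsup_{n\to\infty}|x_n|$ and $L\ge 0$, multiplying by $L$ preserves the inequality, so $-L\ell=L\cdot(-\ell)\le L\cdot\limsup_{n\to\infty}|x_n|=\limsup_{n\to\infty}x_n\cdot\limsup_{n\to\infty}|x_n|$. Combining this with $\A((x_n^2))\le -L\ell$ gives the claim.

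I expect the only real obstacle to be bookkeeping in the averaging step: ensuring that the finitely many initial terms ($n<N$) are harmless (they are, being bounded, so their Cesàro contribution vanishes) and that the $\varepsilon$-error terms are uniform in $n$, so that the single limit $\varepsilon\to0$ is legitimate. Everything else is essentially forced once the sign conditions $L\ge0\ge\ell$ are in place.
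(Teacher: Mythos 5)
Your proof is correct and is essentially the same argument as the paper's: both trap $x_n$ in an interval for all large $n$, exploit the resulting nonpositive quadratic in $x_n$, and then take Ces\`aro averages, using $\A((x_n))=0$ to eliminate the linear term and the boundedness of the finitely many initial terms to discard them. The only cosmetic difference is your choice of interval $[\ell-\varepsilon,\,L+\varepsilon]$, which yields the slightly sharper intermediate bound $\A((x_n^2))\leq -L\ell$ and requires your final sign argument ($L\geq 0\geq \ell$, $\limsup|x_n|=\max(L,-\ell)$), whereas the paper works with $[-\beta,\alpha]$ for $\alpha,\beta$ just above $\limsup x_n$ and $\limsup|x_n|$, writing the same quadratic inequality in the form $(x_n-\alpha)^2\leq-(\alpha+\beta)(x_n-\alpha)$ and obtaining $\A((x_n^2))\leq\alpha\beta$ directly.
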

\begin{proof}
Let $\alpha,\beta>0$ and suppose that $x_n\leq \alpha$ and 
$|x_n|\leq \beta$ for all large $n\in\N$, say for $n\geq N$.
Then $(x_n-\alpha)^2\leq -(\alpha+\beta)(x_n-\alpha)$ for all $n\geq N$ and thus
\begin{equation} \label{c8a}
\begin{aligned}
&\ \quad 
\frac{1}{n}\sum_{k=N}^n x_k^2
-2\alpha \frac{1}{n}\sum_{k=N}^n x_k +\alpha^2 \frac{n-N+1}{n}
=\frac{1}{n}\sum_{k=N}^n (x_k-\alpha)^2
\\ &
\leq -(\alpha+\beta) \frac{1}{n}\sum_{k=N}^n (x_k-\alpha)
= (\alpha+\beta)\alpha\frac{n-N+1}{n}
-(\alpha+\beta) \frac{1}{n}\sum_{k=N}^n x_k .
\end{aligned}
\end{equation}
It follows that $\A((x_n^2))+\alpha^2\leq (\alpha+\beta)\alpha$ and hence that
\begin{equation} \label{c9}
\A((x_n^2)) \leq  \alpha\beta,
\end{equation}
from which the conclusion follows.
\end{proof}
\begin{proof}[Proof of Lemma~\ref{lemma3}]
Since the conclusion depends only on the arguments of $a$, $b$, $p$ and $q$, 
we may assume that $|a|=|b|=|p|=|q|=1$.
We write $a=e^{i\alpha}$, $b=e^{i\beta}$, $p=e^{i\phi}$ and $q=e^{i\psi}$,
with $\alpha,\beta,\phi,\psi\in(-\pi,\pi]$.
Since $1$, $p$ and $q$ are distinct we have
\begin{equation} \label{c9a}
\phi\neq 0,\quad \psi\neq 0\quad\text{and}\quad \phi\neq\psi.
\end{equation}
We will apply Lemma~\ref{lemma5} with
\begin{equation} \label{c10}
x_n:=\re(ap^n+bq^n) =\cos(\alpha+n\phi) +\cos(\beta+n\psi).
\end{equation}
It follows from~\eqref{c8} that $\A((x_n))=0$. We have
\begin{equation} \label{c11}
\begin{aligned} 
x_n^2
&=\cos^2(\alpha+n\phi)+\cos^2(\beta+n\psi)+2\cos(\alpha+n\phi)\cos(\beta+n\psi)
\\ &
=1+\frac12\cos(2\alpha+2n\phi)+\frac12\cos(2\beta+2n\psi)+\cos(\alpha+\beta+n(\phi+\psi))
\\ &
\qquad
+\cos(\alpha-\beta+n(\phi-\psi))
\end{aligned} 
\end{equation}
Suppose first that $2\phi\not\equiv 0 \pmod {2\pi}$ and
$2\psi\not\equiv 0 \pmod {2\pi}$. Equivalently, $\phi\neq\pi$ and $\psi\neq\pi$.
It follows from~\eqref{c8} and~\eqref{c9a} that 
\begin{equation} \label{c12}
\A((x_n^2))=
\begin{cases}
1 &\text{if}\ \phi\neq -\psi,\\
1+\cos(\alpha+\beta) &\text{if}\ \phi= -\psi.
\end{cases}
\end{equation}
Thus $\A((x_n^2))>0$ unless $\phi= -\psi$ and $\alpha+\beta\equiv \pi\pmod {2\pi}$.
Postponing this exceptional case, 
and noting that $\limsup_{n\to\infty}|x_n|\leq 2$ by~\eqref{c10},
we deduce from Lemma~\ref{lemma5} that there exist arbitrarily large $n\in\N$ such
that $x_n\geq \A((x_n^2))/4$.
Lemma~\ref{lemma4} implies that 
for such $n$ the points $1$, $ap^n$ and $bq^n$ are contained in 
an arc of length $\arccos(\A((x_n^2))/4-1)$.
In this case we may thus take $\delta=\arccos(\A((x_n^2))/4-1)$.

Suppose now that $2\phi\equiv 0 \pmod {2\pi}$. Then $\phi=\pi$ and thus $\phi\neq -\psi$.
Hence
\begin{equation} \label{c13}
\A((x_n^2))=
1+\frac12\cos(2\alpha)>0,
\end{equation}
and the conclusion follows as before.
The case that $2\psi\equiv 0 \pmod {2\pi}$ and thus $\psi=\pi$ is analogous.

It remains to consider the case that  $\phi= -\psi$ and $\alpha+\beta\equiv \pi\pmod {2\pi}$.
Then $q=\overline{p}$ and $b=-\overline{a}$.
Thus $ap^n$ and $bq^n$ are symmetric with respect to the imaginary axis
so that $\im(ap^n)$ and $\im(bq^n)$ have the same sign.
If  $\delta$ with the properties claimed does not exist, we thus must have
\begin{equation} \label{c14}
\min\{|ap^n+1|,|bq^n+1|\}=
\min\{|ap^n+1|,|ap^n-1|\}\to 0.
\end{equation}
Thus the only accumulation points of the sequence $(ap^n)$ are $\pm 1$.
This implies that $p=\pm 1$ and $q=\mp 1$, contradicting the hypothesis 
that $1$, $p$ and $q$ are distinct.
\end{proof}
\begin{lemma}\label{lemma13}
Let $F$, $G$ and $H$ be transcendental entire functions for which the arguments of
the Taylor coefficients tend to~$0$.
Let $p,q,r\in\partial\D$ be distinct.
Then $F(pz)$, $G(qz)$ and $H(rz)$ are linearly independent.
\end{lemma}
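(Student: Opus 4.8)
The plan is to argue by contradiction, reducing everything to a coefficient comparison and an angular estimate supplied by Lemma~\ref{lemma3}. Write $F(z)=\sum_n a_nz^n$, $G(z)=\sum_n b_nz^n$ and $H(z)=\sum_n c_nz^n$. For the arguments of the Taylor coefficients to tend to $0$ they must be defined for all large $n$, so the hypothesis is understood to mean that $a_n,b_n,c_n\neq0$ for all large $n$ with $\arg a_n,\arg b_n,\arg c_n\to0$. Suppose there were a nontrivial relation $\lambda_1F(pz)+\lambda_2G(qz)+\lambda_3H(rz)\equiv0$. Comparing coefficients of $z^n$ gives
\begin{equation*}
\lambda_1a_np^n+\lambda_2b_nq^n+\lambda_3c_nr^n=0
\end{equation*}
for all $n$. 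I would first record the elementary geometric fact that finitely many nonzero complex numbers all lying in a sector of opening less than $\pi$ have nonzero sum: bisecting the sector by a direction $e^{i\psi_0}$, each such number $w$ satisfies $\re(e^{-i\psi_0}w)>0$, hence so does the sum. Thus it suffices to produce arbitrarily large $n$ for which the nonzero terms among $\lambda_1a_np^n$, $\lambda_2b_nq^n$, $\lambda_3c_nr^n$ are confined to such a sector.

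Next I would dispose of the degenerate cases. If only one $\lambda_j$ is nonzero, the relation forces the corresponding coefficient sequence to vanish for large $n$, contradicting transcendentality. If exactly two are nonzero, say $\lambda_3=0$ but $\lambda_1,\lambda_2\neq0$, then $\lambda_1a_np^n=-\lambda_2b_nq^n$ for all large $n$; taking arguments and using $\arg a_n,\arg b_n\to0$ shows that $n(\arg p-\arg q)$ converges modulo $2\pi$. Since $p\neq q$ the increment $\arg p-\arg q$ is not congruent to $0$, so $n(\arg p-\arg q)$ cannot converge modulo $2\pi$ (its values are either dense or cyclic of period at least $2$), a contradiction.

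The main case is $\lambda_1,\lambda_2,\lambda_3\neq0$, and here Lemma~\ref{lemma3} is decisive. Because $\arg a_n,\arg b_n,\arg c_n\to0$, the arguments of the three terms differ from $\arg\lambda_1+n\arg p$, $\arg\lambda_2+n\arg q$ and $\arg\lambda_3+n\arg r$ by quantities tending to $0$. After dividing out the common rotation of argument $\arg\lambda_1+n\arg p$, I would apply Lemma~\ref{lemma3} with $a=\lambda_2/\lambda_1$, $b=\lambda_3/\lambda_1$ and with its ``$p$'' and ``$q$'' taken to be $q/p$ and $r/p$; the three numbers $1$, $q/p$, $r/p$ are distinct precisely because $p$, $q$, $r$ are distinct. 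The lemma then yields some $\delta\in(0,\pi)$ and arbitrarily large $n$ for which the three idealized directions lie in a sector of opening $\delta$. For such $n$ large enough, the vanishing perturbations $\arg a_n,\arg b_n,\arg c_n$ keep the three \emph{actual} terms inside a sector of opening $\delta'$ with $\delta<\delta'<\pi$, and then the geometric fact of the first paragraph contradicts the vanishing of their sum. Hence no nontrivial relation exists, proving the asserted linear independence.

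I expect the main obstacle to be this last step: correctly normalizing so that the hypotheses of Lemma~\ref{lemma3} are met, and then controlling the arguments of the true terms $\lambda_j(\cdot)_n(\cdot)^n$ — rather than the idealized directions $e^{i(\arg\lambda_j+n\arg(\cdot))}$ — uniformly over the infinite set of good $n$ that the lemma supplies, so that the $o(1)$ perturbation coming from $\arg a_n,\arg b_n,\arg c_n\to0$ can be absorbed while the opening angle stays strictly below $\pi$.
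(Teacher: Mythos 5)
Your proof is correct and follows essentially the same route as the paper: compare Taylor coefficients to get $\lambda_1a_np^n+\lambda_2b_nq^n+\lambda_3c_nr^n=0$, invoke Lemma~\ref{lemma3} (after a harmless normalization) to place the three idealized directions in a sector of opening $\delta<\pi$, and absorb the $o(1)$ perturbations from $\arg a_n,\arg b_n,\arg c_n\to 0$ to contradict the vanishing of the sum. The only differences are cosmetic: you normalize by dividing out $\lambda_1p^n$ where the paper sets $c=1$, $r=1$, and you write out the one- and two-term degenerate cases that the paper dismisses as easy.
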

\begin{proof}
Let
\begin{equation} \label{f3}
F(z)=\sum_{n=0}^\infty \alpha_n z^n,
\quad
G(z)=\sum_{n=0}^\infty \beta_n z^n
\quad\text{and}\quad
H(z)=\sum_{n=0}^\infty \gamma_n z^n.
\end{equation}
The hypothesis says that 
\begin{equation} \label{f4}
\arg \alpha_n\to 0, \quad \arg \beta_n\to 0
\quad\text{and}\quad \arg \gamma_n\to 0 
\end{equation}
as $n\to\infty$.
Suppose now that 
\begin{equation} \label{f5}
aF(pz)+bG(qz)+cH(rz)=0,
\end{equation}
with $a,b,c\in \C$.
If $c=0$, then we easily obtain $a=b=0$. 
Thus suppose that $c\neq 0$. Without loss of generality we may assume that $c=1$.
We may also assume that $r=1$.
It follows that 
\begin{equation} \label{f6}
\alpha_n a p^n + \beta_n b q^n + \gamma_n =0
\end{equation}
for all $n\geq 0$. 
Lemma~\ref{lemma3} implies that there exists arbitrarily large $n$ such that 
the arguments of $a p^n$, $b q^n$ and $1$ lie in an interval of length at most~$\delta$.
It thus follows from~\eqref{f4} that the 
 arguments of $\alpha_n a p^n$, $\beta_n b q^n$ and $\gamma_n$ lie in an interval of length 
less than~$\pi$.
This contradicts~\eqref{f6}.
\end{proof}
\begin{lemma}\label{lemma15}
Let $F$ be an entire function of the form
\begin{equation} \label{g1}
F(z)=P(z)\prod_{k=1}^\infty \left(1+\frac{z}{x_k}\right),
\end{equation}
where $(x_k)$ is a sequence of positive numbers tending to $\infty$ and 
where $P$ is a polynomial with positive leading coefficient.
Then the arguments of the Taylor coefficients of $F$ tend to~$0$.
\end{lemma}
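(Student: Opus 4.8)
The plan is to separate the two factors of $F$ and reduce everything to one asymptotic fact about the canonical product. Write $G(z):=\prod_{k=1}^\infty(1+z/x_k)=\sum_{m=0}^\infty c_m z^m$. Since $F$ is entire, the product converges, which forces $\sum_k 1/x_k<\infty$; hence $G$ is entire and its Taylor coefficient $c_m=e_m(1/x_1,1/x_2,\dots)$ is the $m$-th elementary symmetric function of the positive numbers $1/x_k$, so $c_m>0$ for every $m\geq 0$ (and $c_0=1$). Writing $P(z)=\sum_{j=0}^d p_j z^j$ with $p_d>0$, the Taylor coefficients of $F=PG$ are $a_n=\sum_{j=0}^d p_j c_{n-j}$, with the convention $c_m=0$ for $m<0$. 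Since $p_d c_{n-d}>0$, after dividing by this positive quantity it suffices to show that all the remaining terms become negligible; this reduces the whole statement to proving that the ratios $r_m:=c_{m+1}/c_m$ tend to~$0$.

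The key step, which I expect to be the main obstacle, is establishing $r_m\to 0$, and for this I would use the log-concavity of $(c_m)$ coming from Newton's inequalities. Applying Newton's inequalities to the partial product $G_N(z)=\prod_{k=1}^N(1+z/x_k)=\sum_{m=0}^N c_m^{(N)}z^m$, a degree-$N$ real polynomial all of whose zeros are real (namely $-x_1,\dots,-x_N$), gives
\[
\bigl(c_m^{(N)}\bigr)^2\geq c_{m-1}^{(N)}c_{m+1}^{(N)}\cdot\frac{m+1}{m}\cdot\frac{N-m+1}{N-m}
\qquad(1\leq m\leq N-1).
\]
The delicate point is the passage to the limit: for fixed $m$ the coefficients $c_m^{(N)}$ converge to $c_m$ (the products $G_N$ converge locally uniformly to $G$), and the last factor tends to $1$, so letting $N\to\infty$ yields $c_m^2\geq c_{m-1}c_{m+1}$. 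Consequently $r_m=c_{m+1}/c_m$ is non-increasing and converges to some $L\geq 0$. Since $r_m\geq L$ for all $m$, we obtain $c_m=\prod_{j=0}^{m-1}r_j\geq L^m$, hence $c_m^{1/m}\geq L$. But $G$ is entire, so $\lim_{m\to\infty}c_m^{1/m}=0$ by Cauchy--Hadamard; therefore $L=0$, that is, $r_m\to 0$.

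Finally I would assemble the estimate. For each fixed $j<d$ and all $n$ large enough that $c_{n-d},\dots,c_n$ are positive, the quotient $c_{n-j}/c_{n-d}=r_{n-d}r_{n-d+1}\cdots r_{n-j-1}$ is a product of $d-j\geq 1$ factors, each tending to $0$, so $c_{n-j}/c_{n-d}\to 0$. Hence
\[
\frac{a_n}{p_d c_{n-d}}=1+\sum_{j=0}^{d-1}\frac{p_j}{p_d}\,\frac{c_{n-j}}{c_{n-d}}\longrightarrow 1
\qquad(n\to\infty).
\]
Since $p_d c_{n-d}>0$, this gives $\arg a_n\to\arg 1=0$, which is the assertion of the lemma. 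Everything beyond the limiting argument in Newton's inequality is routine bookkeeping; as an alternative to that argument one could instead invoke the fact that $G$ lies in the Laguerre--Pólya class and quote log-concavity of its Taylor coefficients directly.
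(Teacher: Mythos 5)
Your proof is correct and follows essentially the same route as the paper's: both decompose $F=PG$, use log-concavity of the Taylor coefficients of the canonical product $G$ to get that the ratios of consecutive coefficients decrease, conclude from entireness that these ratios tend to $0$, and then divide the convolution formula by the dominant positive term $p_d c_{n-d}$ to see that the coefficient of $F$ has argument tending to $0$. The only difference is that the paper dismisses the log-concavity $c_m^2\geq c_{m-1}c_{m+1}$ as well known (citing total positivity), whereas you supply a proof via Newton's inequalities for the partial products together with a limiting argument, which is a legitimate way to fill in that detail.
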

\begin{proof}
Let
\begin{equation} \label{g2}
\prod_{k=1}^\infty \left(1+\frac{z}{x_k}\right)=\sum_{n=0}^\infty a_n z^n ,
\quad
P(z)=\sum_{n=0}^d b_n z^n
\quad\text{and}\quad
F(z)=\sum_{n=0}^\infty c_n z^n.
\end{equation}
Then 
\begin{equation}\label{g3}
c_n=\sum_{k=0}^d b_ka_{n-k}.
\end{equation}
It is well-known and easy to prove that $a_n>0$ and  $a_n^2>a_{n-1}a_{n+1}$
for all $n\in\N$.
(More generally, the sequence $(a_n)$ is totally positive; see~\cite{ASW}.)
Thus  $a_{n+1}/a_n$ is decreasing.
Since $F$ is entire, this implies that $a_{n+1}/a_n\to 0$.

Dividing \eqref{g3} by $a_{n-d}$ we find that 
\begin{equation}\label{g4}
\frac{c_n}{a_{n-d}}=b_d+\sum_{k=0}^{d-1}b_k\frac{a_{n-k}}{a_{n-d}}\rightarrow b_d.
\end{equation}
as $n\to\infty$.
Since $a_{n-d}>0$ and $b_d>0$ we conclude that $\arg c_n\to 0$.
\end{proof}
\begin{proof}[Proof of Proposition~\ref{proposition1}]
Let $f$ be a transcendental meromorphic function of order less than~$1$
for which all but finitely many zeros lie on $L_0$, all but finitely many $1$-points 
lie on $L_1$ and all but finitely many poles lie on $L_\infty$.
Without loss of generality we may assume 
that $f(0)\in\C\setminus\{0\}$.
Let $\Pi_0$, $\Pi_1$ and $\Pi_\infty$ be the canonical products of the zeros,
$1$-points and poles. Then 
\begin{equation} \label{c1}
f=f(0)\frac{\Pi_0}{\Pi_\infty}
\quad\text{and}\quad
f-1=C\frac{\Pi_1}{\Pi_\infty}
\end{equation}
for some constant~$C$. It follows that 
\begin{equation} \label{c2}
f(0)\Pi_0=C\Pi_1+\Pi_\infty.
\end{equation}
Let $-\overline{p}$, $-\overline{q}$
and $-\overline{r}$ be the points where the rays $L_0$, $L_1$ and $L_\infty$
intersect~$\partial \D$.
Then $\Pi_0$ can be written in the form
$\Pi_0(z)=a F(pz)$ where $F$ satisfies the hypothesis of Lemma~\ref{lemma15} and
$a\in\C\setminus\{0\}$.
Similarly, 
$\Pi_1(z)=b G(pz)$
and
$\Pi_\infty(z)=c H(pz)$
for entire functions $G$ and $H$ satisfying the hypothesis of Lemma~\ref{lemma15},
and $b,c\in\C\setminus\{0\}$.
Equation~\eqref{c2} says that $F(pz)$, $G(qz)$ and $H(rz)$ are linearly dependent.
This contradicts Lemma~\ref{lemma13}.
\end{proof}
\begin{proof}[Proof of Theorem~\ref{theorem2}]
Let $f$ be a transcendental meromorphic function 
for which all but finitely many zeros lie on $L_0$, all but finitely many $1$-points lie on $L_1$ and all
but finitely many poles lie on $L_\infty$.
Proposition~\ref{proposition1} implies that $f$ has order at least~$1$.
The results of Ostrowski~\cite{Ostrowski1925} already quoted yield that 
the family $\{f(rz)\colon r>0\}$ is not normal in $\C\setminus\{0\}$.
The conclusion now follows from Theorem~\ref{theorem1}.
\end{proof}
\section{Proof of Theorem~\ref{theorem3}}\label{sec-proof3}
Let $g\colon [r_0,\infty)\to\R$ be a positive increasing function
and $\lambda\geq 0$.
A sequence $(r_k)$ tending to $\infty$ is called a sequence of
P\'olya peaks (of the first kind) of order $\lambda$ for $g$ if
for every $\varepsilon>0$, we have 
\begin{equation}\label{5a}
 g(tr_k)\leq (1+\varepsilon)t^{\lambda} g(r_k)
\quad\text{for}\  \varepsilon\leq t\leq \frac{1}{\varepsilon}
\end{equation}
for all large~$k$.
If instead of~\eqref{5a} we have 
\begin{equation}\label{5b}
 g(tr_k)\geq (1-\varepsilon)t^{\lambda} g(r_k)
\quad\text{for}\  \varepsilon\leq t\leq \frac{1}{\varepsilon}
\end{equation}
for all large~$k$, then $(r_k)$ is called a sequence of
P\'olya peaks of the second kind (of order $\lambda$ for~$g$).

Put
\begin{equation}\label{rho1}
\rho^*:=\sup\left\{ p\in {\mathbb R}  \colon  \limsup_{r,t\to\infty}
\frac{ g(tr)}{t^p g(r)}=\infty\right\}
\end{equation}
and
\begin{equation}\label{rho2}
\rho_*:=\inf\left\{ p \in {\mathbb R}  \colon  \liminf_{r,t\to\infty}
\frac{ g(tr)}{t^p g(r)}=0\right\} .
\end{equation}
Then
\begin{equation}\label{rr}
0\leq\rho_*
\leq\liminf_{r\to\infty}\frac{\log g(r)}{\log r}
\leq\limsup_{r\to\infty}\frac{\log g(r)}{\log r}
\leq\rho^*\leq\infty .
\end{equation}
The upper and lower limits in~\eqref{rr} are called the order and lower order of~$g$.
For a meromorphic function
$f$ the order and lower order are obtained by taking for $g(r)$ the 
Nevanlinna characteristic $T(r,f)$.

The following result is due to  Drasin and Shea~\cite{Drasin1972}.
\begin{lemma}\label{lemma6}
Let $g\colon [r_0,\infty)\to\R$ be a positive increasing function and $\lambda\geq 0$.
Then the following are equivalent:
\begin{itemize}
\item[$(a)$] $\rho_*\leq\lambda\leq\rho^*$.
\item[$(b)$] $g$ has P\'olya peaks of the first kind of order $\lambda$.
\item[$(c)$] $g$ has P\'olya peaks of the second kind of order $\lambda$.
\end{itemize}
\end{lemma}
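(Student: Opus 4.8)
The plan is to pass to an additive model and recast everything as a statement about approximate one--sided local extrema. Write $u(x):=\log g(e^x)$, which is nondecreasing on $[\log r_0,\infty)$; the substitution $r=e^y$, $t=e^\sigma$ turns $g(tr)/(t^pg(r))$ into $\exp(u(y+\sigma)-u(y)-p\sigma)$ and $r,t\to\infty$ into $y,\sigma\to\infty$, so that
\[ \rho^*=\sup\{p:\ \limsup_{y,\sigma\to\infty}(u(y+\sigma)-u(y)-p\sigma)=+\infty\}, \]
\[ \rho_*=\inf\{p:\ \liminf_{y,\sigma\to\infty}(u(y+\sigma)-u(y)-p\sigma)=-\infty\}. \]
Setting $v:=u-\lambda\,\mathrm{id}$ and writing $y_k=\log r_k$, $S=\log(1/\varepsilon)$, $\eta=\log(1+\varepsilon)$, the inequality~\eqref{5a} becomes $v(y_k+\sigma)\le v(y_k)+\eta$ for $|\sigma|\le S$, i.e.\ $y_k$ is an $\eta$--approximate local maximum of $v$ on the window of radius $S$; similarly~\eqref{5b} makes $y_k$ an approximate local minimum. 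Since one peak sequence encodes the inequalities for all $\varepsilon>0$, having peaks is equivalent to the existence of $y_k\to\infty$, $S_k\to\infty$, $\eta_k\to0$ realizing these approximate maxima (resp.\ minima).

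The two implications $(b)\Rightarrow(a)$ and $(c)\Rightarrow(a)$ I would obtain by inserting $\sigma=\pm S$. For a first--kind peak, $\sigma=S$ gives $u(y_k+S)-u(y_k)\le\lambda S+\eta$, so for $p>\lambda$ the quantity $u(y_k+S)-u(y_k)-pS\le(\lambda-p)S+\eta\to-\infty$, whence $\rho_*\le\lambda$; and $\sigma=-S$ gives $u(y_k)-u(y_k-S)\ge\lambda S-\eta$, so at the base point $y_k-S$ the quantity $u(y_k)-u(y_k-S)-pS\ge(\lambda-p)S-\eta\to+\infty$ for $p<\lambda$, whence $\lambda\le\rho^*$. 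The second--kind case is identical with the inequalities reversed. Throughout one diagonalizes $\varepsilon\to0$ so that $S\to\infty$ while the base points still tend to $\infty$.

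For $(a)\Rightarrow(b),(c)$ the structural fact I would exploit is that, $u$ being nondecreasing, $v$ satisfies the one--sided bound $v(b)\ge v(a)-\lambda(b-a)$ for $a<b$: to the right $v$ falls at rate at most $\lambda$, to the left it rises at rate at most $\lambda$. Assume first $\rho_*<\lambda<\rho^*$. The sets defining $\rho^*,\rho_*$ are downward and upward closed (add $(p_0-p)\sigma\ge0$), so the strict inequalities give $\limsup_{y,\sigma\to\infty}(v(y+\sigma)-v(y))=+\infty$ and $\liminf_{y,\sigma\to\infty}(v(y+\sigma)-v(y))=-\infty$: far out, $v$ both rises and falls by arbitrarily large amounts over arbitrarily long intervals. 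Fix a target radius $S$ and height $L\ge\lambda S$, and choose, arbitrarily far out, a rise interval $[a,b]$ with $v(b)\ge v(a)+L$ followed by a fall interval $[c,d]$ with $c\ge b$ and $v(d)\le v(c)-L$. Then a maximizer $y^*$ of $v$ on $[a,d]$ satisfies $v(y^*)\ge\max(v(a),v(d))+L$. The one--sided bound forces a right margin $\ge S$ (if $d-y^*<S$ then $v(d)\ge v(y^*)-\lambda S\ge v(d)+L-\lambda S$, impossible), while on the left, for $x\in[a-S,a]$ one has $v(x)\le v(a)+\lambda S\le v(y^*)-L+\lambda S\le v(y^*)$; hence $v\le v(y^*)$ on all of $[y^*-S,y^*+S]$ and $y^*$ is an \emph{exact} first--kind peak of order $\lambda$ on scale $S$. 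Letting $S=S_k\to\infty$ with $L_k\ge\lambda S_k$ (available since heights and widths both tend to $\infty$) yields the sequence. Second--kind peaks come dually, from a fall--then--rise valley and its minimizer, the roles of the two sides interchanged.

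The boundary cases I would reach by approximation. If $\lambda=\rho^*>\rho_*$, pick $p_n\uparrow\lambda$; then $u-p_n\,\mathrm{id}$ still rises unboundedly and, eventually $p_n>\rho_*$ forcing it to fall unboundedly too, so the previous construction gives exact peaks of order $p_n$ on scales $S_n$. Such a peak obeys $u(y_n+\sigma)-u(y_n)\le\lambda\sigma$ for $\sigma\ge0$ and $\le\lambda\sigma+(\lambda-p_n)S_n$ for $\sigma\in[-S_n,0]$, so choosing $S_n\to\infty$ with $(\lambda-p_n)S_n\to0$ promotes the diagonal sequence to a genuine first--kind peak of order $\lambda$; $p_n\downarrow\lambda$ handles $\lambda=\rho_*$. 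The genuinely delicate point, which I expect to be the main obstacle, is the fully regular case $\rho_*=\rho^*=\lambda$: here neither a rise nor a fall of $v=u-\lambda\,\mathrm{id}$ need exist and the simple approximation collapses, since rises force $p<\lambda$ while falls force $p>\lambda$. In that case one must use the \emph{finiteness} of the $\limsup$ and $\liminf$ at the critical exponent to show $u(y+\sigma)-u(y)=\lambda\sigma+o(\sigma)$ with enough uniformity in $y$ that almost every large $y$ already serves as a peak of both kinds. Passing from one--sided control for $p\ne\lambda$ to a two--sided peak at exactly $p=\lambda$ is where the real work of the Drasin--Shea argument resides.
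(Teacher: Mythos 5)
The paper itself does not prove this lemma: it is quoted verbatim as a result of Drasin and Shea \cite{Drasin1972}, so there is no in-paper argument to compare yours against, and your attempt must stand on its own. Most of it does. The passage to the additive model, the two easy implications $(b)\Rightarrow(a)$ and $(c)\Rightarrow(a)$, and your rise-then-fall construction for $\rho_*<\lambda<\rho^*$ (trap an extremum between a rise and a fall of $v=u-\lambda\,\mathrm{id}$, and use the one-sided bound $v(b)\ge v(a)-\lambda(b-a)$ to force margins of width $S$) are sound, as is the promotion of peaks of order $p_n\to\lambda$ to peaks of order $\lambda$ when $\lambda\in\{\rho_*,\rho^*\}$ but $\rho_*<\rho^*$. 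One small repair: $g$ is only assumed positive and increasing, so $v$ need not be upper semicontinuous and a maximizer of $v$ on $[a,d]$ need not exist; you must take a point within $\delta$ of the supremum, which turns your ``exact'' peaks into $\delta$-approximate ones. That is harmless for the definition of P\'olya peaks, but the claim of exactness should be dropped.

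The genuine gap is the case you yourself flag and postpone: $\rho_*=\rho^*=\lambda$. This is not a removable technicality, and your sketched route (uniform behaviour $u(y+\sigma)-u(y)=\lambda\sigma+o(\sigma)$, so that ``almost every large $y$'' is a peak) is a hope, not an argument; worse, your central mechanism provably cannot be adapted to it. Take $g(r)=r^\lambda\exp\bigl(\sqrt{\log r}\bigr)$, i.e.\ $v(y)=\sqrt{y}$. One checks that $\rho_*=\rho^*=\lambda$, yet $v$ is strictly increasing: there are no falls and no local maxima of any kind, so nothing can be trapped between a rise and a fall. Peaks of both kinds do exist (e.g.\ $y_k=k$ with windows $S_k=k^{1/4}$, since $v(y_k+\sigma)-v(y_k)\le \tfrac12 k^{-1/4}$ for $|\sigma|\le S_k$), but only because the windows may grow slowly compared with the flattening of $v$ --- an effect controlled by the \emph{finiteness} of the limsup and liminf for exponents on the wrong side of $\lambda$, with additive constants that do not shrink. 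Extracting peaks from that information is precisely the content of the Drasin--Shea paper and is absent from your proposal. The omission is material for the present paper as well: in the proof of Theorem~\ref{theorem3}, Lemma~\ref{lemma6} is used to rule out $\rho_*=\rho^*\in[1,3/2)$, which invokes the lemma exactly in its critical case, so a version of the lemma missing the case $\rho_*=\rho^*=\lambda$ would not support the application.
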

We will also use the following standard result about positive harmonic functions.
\begin{lemma}\label{lemma12}
Let $u$ be a positive harmonic function in the right half-plane which extends
continuously to $i\R\setminus\{0\}$, with $u(iy)=0$ for $y\in \R\setminus\{0\}$.
Then $u(z)=\re(az+b/z)$ with $a,b\geq 0$.
\end{lemma}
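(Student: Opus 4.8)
The plan is to reduce the assertion to a statement about positive harmonic functions on the upper half-plane and then to exploit the vanishing boundary data by reflection. First I would rotate the right half-plane onto the upper half-plane: setting $w=iz$ and $v(w):=u(-iw)$, the map $z\mapsto iz$ carries $\{\re z>0\}$ onto $\{\im w>0\}$ and the punctured imaginary axis onto $\R\setminus\{0\}$. Thus $v$ is a positive harmonic function on the upper half-plane which extends continuously to $\R\setminus\{0\}$ with $v\equiv 0$ there. It suffices to show that $v(w)=\im(\alpha w-\beta/w)$ for some $\alpha,\beta\ge 0$, since undoing the rotation via $u(z)=v(iz)$ and using $\im(iz)=\re z$ and $\re(1/z)=\re z/|z|^2$ then gives $u(z)=\re(az+b/z)$ with $a=\alpha$ and $b=\beta$.

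Because $v$ vanishes continuously on $\R\setminus\{0\}$, the reflection principle for harmonic functions~\cite[Theorem~1.2.9]{Ransford1995} lets me extend $v$ across this set: defining $V(w):=v(w)$ for $\im w>0$ and $V(w):=-v(\overline{w})$ for $\im w<0$ yields a function $V$ harmonic on $\C\setminus\{0\}$ that is odd under conjugation, $V(\overline{w})=-V(w)$, and satisfies $V\ge 0$ on the upper half-plane. Writing $w=re^{i\theta}$ and using the expansion of a single-valued harmonic function on the punctured plane,
\[
V(re^{i\theta})=c_0+d_0\log r+\sum_{n\ge 1}\bigl(A_n r^n+B_n r^{-n}\bigr)\cos n\theta+\sum_{n\ge 1}\bigl(C_n r^n+D_n r^{-n}\bigr)\sin n\theta ,
\]
the oddness $V(r,-\theta)=-V(r,\theta)$ forces $c_0=d_0=0$ and $A_n=B_n=0$ for all $n$, leaving only the sine terms.

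The heart of the argument is then a growth-versus-positivity comparison. If $C_N\ne 0$ for some $N\ge 2$, I would take the largest such $N$; choosing $\theta_0\in(0,\pi)$ with $C_N\sin N\theta_0<0$ (possible precisely because $N\ge 2$), the term $C_N r^N\sin N\theta_0$ dominates every other term as $r\to\infty$, so $V(r,\theta_0)\to-\infty$, contradicting $V\ge0$ on the upper half-plane. Hence $C_n=0$ for $n\ge2$, and the symmetric argument as $r\to 0$ gives $D_n=0$ for $n\ge2$. Thus $V=(C_1 r+D_1 r^{-1})\sin\theta$, and letting $r\to\infty$ and $r\to 0$ along a ray in the upper half-plane shows $C_1,D_1\ge0$. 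Since $r\sin\theta=\im w$ and $r^{-1}\sin\theta=\im w/|w|^2=\im(-1/w)$, we obtain $v(w)=\im(C_1 w-D_1/w)$, which is the desired form, completing the reduction.

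I expect the domination step to be the point requiring the most care: one must justify that the single surviving top-order term really controls the locally uniformly convergent series tail, which rests on the fact that the coefficients satisfy $\limsup_{n\to\infty}|C_n|^{1/n}=\limsup_{n\to\infty}|D_n|^{1/n}=0$ because $V$ is harmonic on all of $\C\setminus\{0\}$. An alternative, more classical route would bypass reflection and instead invoke the Herglotz representation of the positive harmonic function $v$ on the upper half-plane; there the crux becomes showing, via Fatou's theorem on nontangential boundary values, that the representing measure is a point mass at the origin, after which the same formula $v(w)=\alpha\im w+(\beta/\pi)\im w/|w|^2$ drops out.
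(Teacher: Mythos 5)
Your reduction to the upper half-plane, the reflection of $v$ across $\R\setminus\{0\}$, and the sine-series expansion of the extended function $V$ on $\C\setminus\{0\}$ are all correct, as is the endgame once you know $V=(C_1r+D_1r^{-1})\sin\theta$. The genuine gap is in the step that kills the coefficients with $n\ge 2$: you propose to take \emph{the largest} $N$ with $C_N\neq 0$, but nothing guarantees such a largest index exists. An odd-under-conjugation harmonic function on $\C\setminus\{0\}$ can perfectly well have infinitely many nonzero sine coefficients (e.g.\ $\im e^{w}$, with $C_n=1/n!$), and the coefficient decay $\limsup_n|C_n|^{1/n}=0$ that you invoke only guarantees convergence of the series; it does not produce a top-order term, and without one the ``dominant term'' comparison collapses — indeed when infinitely many $C_n$ are nonzero it is the higher-order terms that dominate any fixed one. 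Finiteness of the expansion is exactly what positivity must be used to prove, so as written the argument is circular at its crucial point.

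The step can be repaired within your framework by applying positivity at the level of Fourier coefficients instead of pointwise domination. By oddness of $V$ in $\theta$, for every $r>0$,
\begin{equation}
C_n r^n + D_n r^{-n} \;=\; \frac{2}{\pi}\int_0^{\pi} V(re^{i\theta})\sin n\theta \, d\theta ,
\end{equation}
and since $V\ge 0$ on the upper half-plane and $|\sin n\theta|\le n\sin\theta$ for $0\le\theta\le\pi$, this yields $|C_n r^n + D_n r^{-n}|\le n\,(C_1 r + D_1 r^{-1})$ for all $r>0$. Letting $r\to\infty$ gives $C_n=0$ and letting $r\to 0$ gives $D_n=0$ for every $n\ge 2$, with no need for a largest index. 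With this substitution your proof is complete, and it is then genuinely different from the paper's: the paper simply quotes the representation theory of positive harmonic functions on a half-space, namely \cite[Theorem~7.26]{Axler1992} to write $u(z)=a\re z+P(z)$ with $P$ a Poisson integral, and \cite[Theorem~7.19]{Axler1992} to conclude from the vanishing boundary values that $P(z)=\re(b/z)$ — essentially the Herglotz/Fatou ``alternative route'' you sketch in your last paragraph. The citation route is shorter; your reflection-and-series argument, once fixed as above, has the merit of being self-contained and elementary.
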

To prove this result, we note that~\cite[Theorem~7.26]{Axler1992} yields that 
$u$ has the form $a \re z+P(z)$ where $P$ is a Poisson integral for the right
half-plane. Applying~\cite[Theorem~7.19]{Axler1992} to $u(z)-a\re z$ shows that
$P$ has the form $P(z)=\re(b/z)$.
  
Lemmas~\ref{lemma6} and~\ref{lemma12}
 will be used to prove that the Schwarzian $S(f)$ is rational.
In order to prove that $S(f)$ is not only rational, but has the form~\eqref{i3},
we need results of 
Elfving~\cite{Elfving1934} 
and Nevanlinna~\cite{Nevanlinna1932} concerning meromorphic functions 
with rational Schwarzian derivative. These results were proved by
Nevanlinna for 
the case of a polynomial Schwarzian derivative and extended to 
rational Schwarzian derivatives by Elfving.

The first result we need is the following.
\begin{lemma}\label{lemma9}
Let $Q$ be a rational function satisfying  $Q(z)\sim a z^d$
as $z\to\infty$, with $d\in\N$ and $a\in\C\setminus\{0\}$.
Let $f$ be a meromorphic function satisfying
\begin{equation} \label{d13}
S(f)=Q.
\end{equation}
Then $f$ has order $(d+2)/2$.
\end{lemma}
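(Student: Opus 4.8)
The plan is to convert the (nonlinear, third-order) Schwarzian equation $S(f)=Q$ into a \emph{linear} second-order equation, whose solutions have a growth that is completely controlled by an asymptotic integration near infinity.

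First I would exploit the classical link between the Schwarzian derivative and linear differential equations. Writing $\phi:=f''/f'$, the relation $S(f)=Q$ reads $\phi'-\tfrac12\phi^2=Q$, a Riccati equation; the substitution $\phi=-2w'/w$ turns it into the linear equation
\[
 w''+\tfrac12 Q\, w=0 .
\]
If $w_1,w_2$ is a basis of solutions, then $f=w_1/w_2$, and since this equation has no first-order term its Wronskian is constant, giving the key identity $f'=c/w_2^2$ with $c\in\C\setminus\{0\}$. In particular $w_2^2=c/f'$ is a genuine meromorphic function, so I can work with it without worrying about the multivaluedness of $w_2$ itself around the finitely many poles of $Q$.

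Next I would read off the growth of $w_2$ from the behaviour of $Q$ at infinity. As $Q$ is rational with $Q(z)\sim az^d$, it is holomorphic outside a large disk, where $\tfrac12 Q(z)\sim \tfrac{a}{2}z^d$. The Liouville--Green (WKB) asymptotics for $w''+\tfrac12 Q w=0$ then give, in each sector bounded by the Stokes lines, solutions of the form $(-\tfrac12 Q)^{-1/4}\exp(\pm\zeta)$, where
\[
 \zeta(z)=\int^{z}\sqrt{-\tfrac12 Q(t)}\,dt\sim \frac{2}{d+2}\sqrt{-\tfrac{a}{2}}\;z^{(d+2)/2}.
\]
Thus $\log|w_2(z)|$ is comparable to $\re(\pm\zeta(z))$: the dominant exponential yields $\log M(r,w_2)\gtrsim r^{(d+2)/2}$, while the bound valid in every sector gives $\log M(r,w_2)\lesssim r^{(d+2)/2}$. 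Hence $w_2$ has order exactly $(d+2)/2$.

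Finally I would transfer this to $f$. The order of a transcendental meromorphic function equals the order of its derivative, so by the identity $f'=c/w_2^2$ the order of $f$ equals that of $w_2^2=c/f'$, which is the order of $w_2$, namely $(d+2)/2$. The main obstacle is making the asymptotic integration rigorous: one must control the WKB error terms uniformly on sectors (recall that the dominant and recessive solutions exchange roles across the anti-Stokes lines, which is the Stokes phenomenon), and one must verify that the finitely many poles of $Q$---which correspond to the critical points of $f$ and contribute only locally---do not affect the order. This is precisely the content of the asymptotic theory of Nevanlinna and Elfving quoted above, which one may either invoke directly or reprove by standard asymptotic integration.
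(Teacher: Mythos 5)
The paper does not actually prove Lemma~\ref{lemma9}: it quotes it as a known result, due to Nevanlinna for polynomial $Q$ and to Elfving for rational $Q$. So the relevant comparison is between your sketch and that classical theory, and your sketch is precisely the standard argument underlying those citations (as your final sentence concedes). The skeleton is correct: the reduction via $\phi=f''/f'$ and $\phi=-2w'/w$ to the linear equation $w''+\tfrac12 Qw=0$; the constant Wronskian giving $f'=c/w_2^{2}$, which is the right device for sidestepping multivaluedness (note that the poles of $Q$ are exactly the critical points of $f$, so $1/f'$ has only finitely many poles); the Liouville--Green asymptotics with $\zeta(z)\sim\frac{2}{d+2}\sqrt{-a/2}\,z^{(d+2)/2}$ in each of the $d+2$ Stokes sectors; and the standard facts that a meromorphic function and its derivative have the same order and that the $T$-order and $\log M$-order agree for functions with finitely many poles.

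One step is glossed over and is a genuine (though repairable) gap: the lower bound $\log M(r,w_2)\geq cr^{(d+2)/2}$. In each sector $w_2=Au_{+}+Bu_{-}$ with $u_{\pm}\asymp|Q|^{-1/4}e^{\pm\re\zeta}$; the upper bound holds in every sector, but for the lower bound you must rule out that $A=0$ in \emph{every} sector, i.e.\ that $w_2$ is recessive in all directions. For polynomial $Q$ this follows from Liouville's theorem, since $w_2$ is then a single-valued entire solution; but in the rational case --- the case at issue --- your $w_2$ is only a sector-by-sector branch of $(c/f')^{1/2}$, and the branches in different sectors may be different solutions, so Liouville does not apply as stated. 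A patch using your own identity: if the branch were recessive in every sector, then $1/f'=w_2^{2}/c\to0$ as $z\to\infty$ in all directions; having finitely many poles, $1/f'$ would then be rational, hence $f'$ rational, hence $f$ rational (a nonrational primitive would have logarithmic singularities, contradicting single-valuedness); but for $f$ meromorphic at infinity one has $S(f)(z)=O(z^{-2})$ as $z\to\infty$, contradicting $S(f)\sim az^{d}$ with $d\geq1$. (Alternatively, one can get the lower bound from the counting function of zeros and poles of $f$ along the anti-Stokes rays.) With this point repaired, and with the uniform control of the WKB error terms taken from the quoted literature --- which is fair, given that the paper itself outsources the entire lemma to Nevanlinna and Elfving --- your argument is sound.
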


We will see that in our case the order of $f$ is $3/2$ so that $d=1$. Thus
$Q(z)\sim a z$ as $z\to\infty$.
It is no loss of generality to assume that $a<0$.
The asymptotics of $f$ are then described by the following result.
\begin{lemma}\label{lemma11}
For $j\in\{1,2,3\}$,
let $L^j=\{re^{i(2j-1)\pi/3}\colon r>0\}$. The rays $L^j$ divide $\C$ into
three congruent sectors. Let $V_j$ be the sector opposite to~$L^j$.

Let $c>0$ and let $Q$ be a rational function satisfying
\begin{equation} \label{d13a}
Q(z)\sim -c z
\quad\text{as}\  z\to\infty.
\end{equation}
Let $f$ be a meromorphic function satisfying~\eqref{d13}.

Then there exist distinct values $a_1,a_2,a_3\in\CC:=\C\cup\{\infty\}$ such that $f(z)\to a_j$ 
as $|z|\to\infty$ in any closed subsector of~$V_j$.
These values $a_j$ are logarithmic singularities, and $f$ has no other asymptotic values.

For each  $j\in\{1,2,3\}$, the function $f$ has infinitely any $a_j$-points,
and given $\varepsilon>0$, all but finitely many $a_j$-points 
are contained in the sector of opening angle $\varepsilon$ bisected by~$L^j$.

Moreover, a meromorphic function $F$ satisfies $S(F)=Q$ if and only if $F$
is of the form $F=L\circ f$ with a linear fractional transformation~$L$.
\end{lemma}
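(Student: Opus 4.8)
The plan is to convert the Schwarzian equation into a second-order linear differential equation and to read off the global behaviour of $f$ from a Liouville--Green (WKB) analysis of that equation. Concretely, if $w_1,w_2$ is any basis of solutions of
\begin{equation}
w''+\tfrac12 Q\,w=0,
\end{equation}
then $f=w_1/w_2$ satisfies $S(f)=Q$, and conversely every meromorphic solution of $S(F)=Q$ arises in this way. Since the solution space of the linear equation is two-dimensional, any two bases differ by an element of $\mathrm{GL}_2(\C)$, which acts on the ratio as a linear fractional transformation; this already yields the final assertion that $S(F)=Q$ holds if and only if $F=L\circ f$. I would record this reduction first, as it is purely formal and underlies everything else.

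Next I would carry out the asymptotic analysis. Since $Q(z)\sim -cz$ with $c>0$, the equation reads $w''=(\tfrac{c}{2}z+o(z))w$, an Airy-type equation, and WKB produces two formal solutions behaving like $(-\tfrac12 Q)^{-1/4}\exp\bigl(\pm\int\sqrt{-\tfrac12 Q}\bigr)$, where $\int\sqrt{-\tfrac12 Q}\sim \tfrac23\sqrt{c/2}\,z^{3/2}$. The real part of $z^{3/2}$ vanishes exactly on the rays $\theta=\pi/3,\pi,5\pi/3$, that is, on $L^1,L^2,L^3$, and is extremal on the bisecting rays of $V_1,V_2,V_3$. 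Thus the $L^j$ are the anti-Stokes (oscillation) lines and each $V_j$ is a Stokes sector in which one solution is recessive and the other dominant. Expressing the fixed basis through the recessive/dominant pair adapted to $V_j$, the ratio $f=w_1/w_2$ tends to a constant $a_j\in\CC$ throughout any closed subsector of $V_j$, which gives the asymptotic values; the exponential rate $f-a_j\asymp\exp\bigl(-2\int\sqrt{-\tfrac12 Q}\bigr)$ then exhibits each $a_j$ as a logarithmic singularity, since on a tract over $a_j$ the inverse is a universal covering. For distinctness I would observe that $f-a_j$ corresponds to the solution recessive in $V_j$; if $a_i=a_j$ for $i\neq j$, a single nontrivial solution would be recessive, hence decaying, across the shared anti-Stokes ray into two adjacent sectors, i.e.\ in a sector of opening $4\pi/3>\pi$, which is impossible for such an equation. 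That these are the only asymptotic values follows from a counting argument: $f$ has order $3/2$ by Lemma~\ref{lemma9}, so by the Denjoy--Carleman--Ahlfors theorem it has at most $2\cdot\tfrac32=3$ direct singularities, and the three logarithmic values $a_j$ already exhaust them.

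For the localization of the $a_j$-points I would note that they are precisely the zeros of the solution recessive in $V_j$ (with $a_j=\infty$ read as the zeros of $w_2$). This solution decays in $V_j$ and is dominant, hence zero-free for large $|z|$, in the interiors of the other two sectors; it becomes oscillatory only across the anti-Stokes line $L^j$ opposite to $V_j$, where its zeros accumulate. A WKB computation of those zeros, or a comparison with the zeros of $\Ai$ (which lie exactly on the negative real axis, the anti-Stokes line of the solution recessive on the positive axis), shows that there are infinitely many such zeros and that, for each $\varepsilon>0$, all but finitely many lie in the sector of opening $\varepsilon$ bisected by $L^j$.

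The main obstacle is making the WKB estimates rigorous and uniform up to the boundary rays, and in particular controlling the recessive solution sharply enough to pin its zeros into arbitrarily thin sectors about $L^j$; this is exactly where I would invoke the Nevanlinna--Elfving description of the global line-complex structure of solutions of $S(f)=Q$ cited above, which packages precisely these connection and oscillation estimates. By contrast, the reduction to the linear equation, the characterization up to $L$, and the distinctness of the $a_j$ are short and formal.
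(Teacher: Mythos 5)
Your proposal is correct in outline and is essentially the paper's own route: the paper gives no proof of this lemma at all, quoting it from Nevanlinna (polynomial $Q$) and Elfving (rational $Q$), and your Liouville--Green/WKB sketch -- reduction to $w''+\tfrac12 Qw=0$, the $\mathrm{GL}_2$ action giving the linear-fractional characterization, anti-Stokes rays at $\arg z=\pi/3,\pi,5\pi/3$, recessive solutions determining the $a_j$, and zeros of the recessive solution clustering on the opposite ray $L^j$ -- is exactly the classical asymptotic-integration argument behind those citations, to which you too defer for the rigorous connection and oscillation estimates. One caveat if you wanted the sketch to be self-contained: the Denjoy--Carleman--Ahlfors theorem bounds only \emph{direct} singularities, so to conclude that $f$ has no other asymptotic values you must also exclude indirect ones -- for instance by noting that $f'$ is a constant multiple of $w_2^{-2}$ up to contributions from the poles of $Q$, so $f$ has only finitely many critical values, while indirect singularities of finite-order meromorphic functions are limits of critical values; relatedly, when $Q$ has poles the solutions $w_j$ are in general multivalued (only their quotient is single-valued), which is precisely the technical difficulty that Elfving's extension of Nevanlinna's work addresses.
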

Replacing $f$ by $L\circ f$ with a linear fractional transformation~$L$
we can replace the values $a_1$, $a_2$ and $a_3$ 
by three other distinct values, in particular by the values $0$, $1$ and~$\infty$.

The following result is due to Gundersen~\cite[Theorem~3]{Gundersen1986}.
Here a meromorphic function $f$ is called real if $f(x)\in\R\cup\{\infty\}$ 
for all $x\in\R$. Otherwise it is called nonreal.
\begin{lemma}\label{lemma7}
Let $A$ be a nonreal polynomial of degree~$n$, put
\begin{equation} \label{d10}
F(z)=\frac{A(z)-\overline{A(\overline{z})}}{2i},
\end{equation}
and let $p$ denote the number of distinct real zeros of~$F$.
Let $w$ be a nontrivial solution of 
\begin{equation} \label{d11}
w''+Aw=0.
\end{equation}
Then the number $k$ of real zeros of $w$ is finite and we have $k\leq p+1$.
In particular, $k\leq n+1$.
\end{lemma}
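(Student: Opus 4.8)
The plan is to reduce counting the real zeros of $w$ to counting the real zeros of the real polynomial $F$, by producing an auxiliary real quantity that is strictly monotone between consecutive zeros of $w$. First I would introduce the Schwarz reflection $v(z):=\overline{w(\overline z)}$. Since $w$ is entire with $w''=-Aw$, expanding in Taylor series gives $v'(z)=\overline{w'(\overline z)}$ and $v''(z)=\overline{w''(\overline z)}=-\overline{A(\overline z)}\,v(z)$, so $v$ solves $v''+Bv=0$ with $B(z):=\overline{A(\overline z)}$. Writing $A(z)=\sum_j a_jz^j$ one checks $A-B=\sum_j(a_j-\overline{a_j})z^j=2iF$, so the Wronskian $W:=wv'-w'v$ satisfies
\begin{equation}
W'=wv''-w''v=(A-B)wv=2iF\,wv .
\end{equation}

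Next I would restrict to the real axis. For $x\in\R$ we have $v(x)=\overline{w(x)}$, hence
$W(x)=w(x)\overline{w'(x)}-w'(x)\overline{w(x)}=2i\,\omega(x)$, where $\omega(x):=\im\!\bigl(w(x)\overline{w'(x)}\bigr)$ is real-valued, and the displayed identity becomes the pointwise relation
\begin{equation}
\omega'(x)=F(x)\,|w(x)|^2 .
\end{equation}
Two features drive the argument. On the one hand, every real zero $x_0$ of $w$ is a zero of $\omega$, because $w(x_0)=0$ forces $\omega(x_0)=0$. On the other hand, on any set where $w\neq0$ the sign of $\omega'$ is governed entirely by the sign of $F$.

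The core step is then a Rolle-type assertion: between any two consecutive real zeros $x_1<x_2$ of $w$ there must lie a real zero of $F$. Indeed, on the open interval $(x_1,x_2)$ we have $w\neq0$, so $|w|^2>0$; if $F$ had no zero there, then $\omega'=F|w|^2$ would keep a strictly constant sign on $(x_1,x_2)$, making the continuous function $\omega$ strictly monotone on $[x_1,x_2]$, contradicting $\omega(x_1)=\omega(x_2)=0$. Since $A$ is nonreal, $F$ is a nonzero real polynomial with $\deg F\le n$, hence has at most $p\le n$ distinct real zeros. If $w$ had real zeros $x_1<\dots<x_k$, the $k-1$ pairwise disjoint intervals $(x_i,x_{i+1})$ would each contain a distinct real zero of $F$, giving $k-1\le p$. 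This both forces $k$ to be finite and yields $k\le p+1\le n+1$.

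I expect the main obstacle to be conceptual rather than computational: recognizing that the right object to track is the reflected solution $v(z)=\overline{w(\overline z)}$ together with its Wronskian against $w$, whose logarithmic derivative factors through exactly the polynomial $F$. Once that identity is in hand, the sign analysis is elementary. The only points requiring mild care are verifying that $F\not\equiv0$ (which is precisely the hypothesis that $A$ is nonreal), and checking that $\omega$ is genuinely continuous with $\omega'=F|w|^2$ holding pointwise on $\R$, so that the monotonicity argument applies verbatim on each interval between consecutive real zeros of $w$.
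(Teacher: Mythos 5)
Your proof is correct, and in fact there is no internal proof to compare it against: the paper states this lemma purely as a quotation of Gundersen's Theorem~3 and supplies no argument, so your proposal fills in exactly what the authors delegate to the literature. Your mechanism --- reflecting $w$ to $v(z)=\overline{w(\overline z)}$, forming the Wronskian $W=wv'-w'v$ with $W'=(A-B)wv=2iFwv$, and reading this on $\R$ as $\omega'=F|w|^2$ for $\omega=\im\bigl(w\overline{w'}\bigr)$ --- is the classical device for real-zero problems of this type (and is, in essence, how Gundersen argues); one can even bypass the reflection by differentiating $\omega(x)=\im\bigl(w(x)\overline{w'(x)}\bigr)$ directly, since $\frac{d}{dx}\bigl(w\overline{w'}\bigr)=|w'|^2-\overline{A}|w|^2$ gives $\omega'=\im(A)\,|w|^2=F|w|^2$ on~$\R$. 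Two points you leave implicit deserve a sentence but are not gaps. First, your final count applies the Rolle step to each interval $(x_i,x_{i+1})$, which requires the listed zeros to be \emph{consecutive} zeros of $w$ (no zeros in between); this is arranged by noting that $w\not\equiv 0$ is entire, so its zeros are isolated and any compact interval contains only finitely many of them, whence one may list \emph{all} real zeros of $w$ in $[x_1,x_k]$. Second, the finiteness claim then follows cleanly: infinitely many real zeros would produce infinitely many pairwise disjoint gaps between consecutive zeros, hence infinitely many distinct real zeros of the real polynomial $F$, which is nonzero precisely because $A$ is nonreal in the paper's sense ($A(x)\notin\R$ for some real $x$, i.e.\ $F=\im A\not\equiv 0$ on $\R$). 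With $p\le\deg F\le n$ this yields $k\le p+1\le n+1$, as required.
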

If $A$ is a polynomial, then every solution $w$ of~\eqref{d11} is entire.
It follows from Lemma~\ref{lemma7} that if there is a solution of~\eqref{d11}
which has infinitely many real zeros, then $A$ is real.

If $w_1$ and $w_2$ are linearly independent solutions of~\eqref{d11},
then $f:=w_1/w_2$ satisfies 
\begin{equation} \label{d12}
S(f)=2A.
\end{equation}
Conversely, every solution $f$ of~\eqref{d12} is a quotient of two linearly
independent solutions of~\eqref{d11}. Thus we find that if $f$ satisfies~\eqref{d12}
for some polynomial $A$ and if $f$ has infinitely many real zeros,
then $A$ is real.
Since $S(L\circ f)=S(f)$ for every linear fractional transformation $L$ 
we see that if a meromorphic function $f$
satisfying~\eqref{d12} has infinitely many real $a$-points 
for some $a\in\CC$, then $A$ is real.

It turns out that this remains valid for rational functions~$A$.
\begin{lemma}\label{lemma8}
Let $Q$ be a rational function and let $f$ be a meromorphic functions
satisfying $S(f)=Q$. If $f$ has infinitely many real $a$-points for some
$a\in\CC$, then $Q$ is real.
\end{lemma}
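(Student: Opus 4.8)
The plan is to reduce to the polynomial case only in spirit: rather than invoke Lemma~\ref{lemma7}, I would run a monotonicity argument of the type underlying Gundersen's theorem, but localized near infinity, so that the finitely many poles of $Q$ never interfere. First I would normalize the value~$a$. Since the $a$-points of $f$ are the zeros of $L\circ f$ for any linear fractional transformation $L$ with $L(a)=0$, and since $S(L\circ f)=S(f)=Q$, I may replace $f$ by $L\circ f$ and assume $a=0$; thus $f$ has infinitely many real zeros. These zeros are isolated, so infinitely many of them tend to $+\infty$ or to $-\infty$, and after replacing $f(z)$ by $f(-z)$ if necessary (which changes $Q(z)$ to $Q(-z)$, again rational and real precisely when $Q$ is) I may assume there is a sequence $x_n\to+\infty$ of real zeros of $f$. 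Finally I write $f=w_1/w_2$, where $w_1,w_2$ are linearly independent solutions of $w''+\tfrac12 Qw=0$, exactly as in~\eqref{d11}--\eqref{d12}.

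Let $T$ be larger than all real poles of $Q$. On $(T,\infty)$ the coefficient $\tfrac12 Q$ is analytic, so $w_1$ is analytic there; and since $w_1,w_2$ have no common zeros (their Wronskian is a nonzero constant), the real zeros of $f$ in $(T,\infty)$ are exactly the zeros of $w_1$ there. In particular $w_1(x_n)=0$ for all large~$n$. Now suppose, for contradiction, that $Q$ is not real. Writing $Q^*(z):=\overline{Q(\bar z)}$, which is again rational, the hypothesis means $Q\neq Q^*$, so $Q-Q^*$ is not identically zero. For real $x$ one has $\im Q(x)=\tfrac{1}{2i}\bigl(Q(x)-Q^*(x)\bigr)$, so $x\mapsto\im Q(x)$ is the restriction to $\R$ of a nonzero rational function and therefore has constant sign for all sufficiently large~$x$; enlarging $T$, I may assume $\im Q$ has constant sign on $(T,\infty)$.

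Consider now $U(x):=2\im\bigl(w_1(x)\overline{w_1'(x)}\bigr)$ on $(T,\infty)$. Differentiating and using $w_1''=-\tfrac12 Q\,w_1$, the cross terms cancel and one finds $U'(x)=\im Q(x)\,|w_1(x)|^2$. Hence $U'$ has constant sign on $(T,\infty)$ and vanishes only at the isolated zeros of $w_1$, so $U$ is strictly monotone there and therefore has at most one zero in $(T,\infty)$. But $U(x_n)=0$ for every large $n$, since $w_1(x_n)=0$, and the $x_n$ are infinitely many points of $(T,\infty)$. This contradiction forces $Q$ to be real.

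The step I expect to require the most care is the one-line computation $U'(x)=\im Q(x)\,|w_1(x)|^2$: it is the mechanism that encapsulates the entire proof and is exactly the phenomenon behind Gundersen's Lemma~\ref{lemma7}, and one must also check that the zero set of $U'$ on $(T,\infty)$ is discrete, so that the monotonicity of $U$ is strict rather than merely weak. Everything else---analyticity of $w_1$ beyond the poles, the identification of the real zeros of $f$ with the zeros of $w_1$, the rationality of $Q^*$, and the eventual constancy of the sign of $\im Q$---is routine once $T$ is chosen beyond all real poles of $Q$ and all real zeros of $Q-Q^*$.
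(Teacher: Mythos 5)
Your proof is correct, and it takes a genuinely more self-contained route than the paper's. The paper does not rerun the real-zeros argument at all: it observes that the statement follows from Gundersen's Lemma~\ref{lemma7} when $Q$ is a polynomial, and then asserts that Gundersen's proof extends to rational $Q$, emphasizing that this extension is not a formality because for rational coefficients the solutions of~\eqref{d11} may be multi-valued while only their quotient is single-valued. You instead extract the mechanism behind Gundersen's theorem --- the identity $U'(x)=\im Q(x)\,|w_1(x)|^2$ for $U=2\im\bigl(w_1\overline{w_1'}\bigr)$, which checks out --- and run it only on a ray $(T,\infty)$ chosen beyond all real poles of $Q$ and all real zeros of $Q-Q^*$. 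This buys two things: you never need the polynomial hypothesis, and you never need Gundersen's count $k\leq p+1$ of real zeros; strict monotonicity of $U$ on the ray against infinitely many zeros of $w_1$ there is already a contradiction. (Your identity is the same Green/Lommel transform the paper itself uses in the proof of Lemma~\ref{lemma14}, so the technique is consonant with the rest of the paper.)

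One step must be made explicit, because it is precisely the subtlety the paper flags: writing $f=w_1/w_2$ ``exactly as in~\eqref{d11}--\eqref{d12}'' is not literally available, since for rational $Q$ there need not exist single-valued solutions of $w''+\tfrac12 Qw=0$ on all of~$\C$, and a globally defined ``$w_1$'' that you then restrict to $(T,\infty)$ is not something you have. Your localization is exactly what rescues this, but you should say so: fix a simply connected neighborhood $D$ of $(T,\infty)$ avoiding the finitely many poles of $Q$ (possible since the poles have positive distance from the ray); on $D$ the equation has a two-dimensional space of single-valued holomorphic solutions, and since $S(f)=Q$ on $D$, the restriction of $f$ to $D$ equals a quotient of two independent ones, because two meromorphic functions with the same Schwarzian on a domain differ by a single fixed linear fractional transformation. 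With that sentence added, the rest of your argument --- constant nonzero Wronskian, identification of the real zeros of $f$ in $(T,\infty)$ with those of $w_1$, eventual constancy of the sign of $\im Q$, and strict (not merely weak) monotonicity of $U$ because the zero set of $w_1$ is discrete --- is complete as you wrote it.
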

As explained above, this result follows from Lemma~\ref{lemma7} if $Q$ is a 
polynomial. However, the proof extends to the case that $Q$ is rational.
We note that in order to prove Lemma~\ref{lemma8} for rational $Q$
it does not suffice to extend Lemma~\ref{lemma7} to the case
that $A$ is rational and $w$ is meromorphic, since for rational $A$ the solutions
of~\eqref{d11} may be multi-valued, but the quotient of two multi-valued 
solutions may be single-valued. However, the proof of Lemma~\ref{lemma7}
given in~\cite{Gundersen1986} also extends to multi-valued functions.

The proof of the following lemma uses Lommel's method 
to prove that the zeros of Bessel functions are real; see~\cite[p.~482]{Watson1966}.
\begin{lemma} \label{lemma14}
Let $r>0$,  $\gamma> -2$ and $0<\alpha< \pi$ with  $(2+\gamma)\alpha<\pi$.
Let $u$ and $A$ be holomorphic in
$S:=\{z\colon |z| > r ,\, |\arg z|< \alpha\}$
and suppose that
$u''+Au=0$.
Suppose also that both $u$ and $A$ are real on the real axis and that there exists $c>0$ 
such that
\begin{equation} \label{y1}
A(z)\sim cz^\gamma
\quad\text{as}\ z\to\infty,\, z\in S.
\end{equation}
Then there exists $x_1>r$ such that all zeros of $u$ in $\{z\colon |\arg(z-x_1)|<\alpha\}$
are real.
\end{lemma}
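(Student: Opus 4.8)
The plan is to prove the stronger statement that, for a suitably chosen large real zero $x_1$ of $u$, the function $u$ has \emph{no} zeros at all in the two open subsectors $\{0<\arg(z-x_1)<\alpha\}$ and $\{-\alpha<\arg(z-x_1)<0\}$; then every zero in $\{|\arg(z-x_1)|<\alpha\}$ is forced onto the real axis. The mechanism is a monotonicity of a Wronskian-type functional along rays issuing from $x_1$, which is the form Lommel's method takes in this setting.

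First I would extract the sign information from the hypotheses. Since $A$ is real on $\R$ we have $A(\bar z)=\overline{A(z)}$, so the asymptotic expansion of $A$ at $\infty$ has real coefficients; together with $A(z)\sim cz^\gamma$, $c>0$, this gives $\arg A(z)\to\gamma\arg z$ as $z\to\infty$ in $S$. Fix a direction $\varphi\in(0,\alpha)$ and the ray $z(t)=x_1+te^{i\varphi}$, $t\ge0$; every point of it has $0\le\arg z(t)\le\varphi$. Setting $g(t):=u(x_1+te^{i\varphi})$ and differentiating twice, $g$ solves
\begin{equation}
g''(t)+B(t)g(t)=0,\qquad B(t):=e^{2i\varphi}A(x_1+te^{i\varphi}).
\end{equation}
Thus $\arg B(t)=2\varphi+\arg A(z(t))$ lies, up to a small error, in $[2\varphi,(\gamma+2)\varphi]$, and here the angle hypothesis enters decisively: since $0<2\varphi$ and $(\gamma+2)\varphi<(\gamma+2)\alpha<\pi$, one gets $\arg B(t)\in(0,\pi)$, that is $\im B(t)>0$, for every $t\ge0$ once $x_1$ is large enough. (For $-2<\gamma<0$, where $2\varphi$ may exceed $\pi$, I would first apply the Liouville transformation $z\mapsto\int\sqrt{A}$, $u\mapsto A^{1/4}u$, which replaces the leading term by a constant and maps the sector to half-angle $(\gamma+2)\alpha/2<\pi/2$, so that the same sign statement holds uniformly.)

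The Lommel step is to introduce the real functional $W(t):=\im\big(g'(t)\overline{g(t)}\big)$, which is (up to a factor) the Wronskian of $g$ and $\overline{g}$. Using $g''=-Bg$,
\begin{equation}
W'(t)=\im\big(g''(t)\overline{g(t)}\big)=-\im\big(B(t)\big)\,|g(t)|^2\le 0,
\end{equation}
with equality only at the isolated zeros of $g$, so $W$ is strictly decreasing on $[0,\infty)$. At any zero $t_\ast$ of $g$ one has $W(t_\ast)=0$, and a strictly decreasing function can vanish at most once. Finally I would choose the base point: since $A(x)\sim cx^\gamma$ with $c>0$ gives $x^2A(x)\to\infty$, Kneser's criterion makes $u''+Au=0$ oscillatory on the real axis, so the real solution $u|_{(r,\infty)}$ has zeros of arbitrarily large modulus; pick $x_1>r$ to be such a zero, large enough for the sign statement. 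Then $g(0)=u(x_1)=0$ yields $W(0)=0$, whence $W(t)<0$ for all $t>0$, so $g$ has no zero for $t>0$. As $\varphi\in(0,\alpha)$ was arbitrary, $u$ has no zero in the open upper subsector, and the symmetric computation ($\im B<0$, $W$ strictly increasing) handles the lower one; hence all zeros in $\{|\arg(z-x_1)|<\alpha\}$ are real.

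I expect the main obstacle to be the uniform sign control $\im B(t)>0$ for \emph{all} $t\ge0$ and all admissible $\varphi$, in particular for rays nearly tangent to $\R$ (small $\varphi$) and, when $\gamma<0$, for $\varphi$ near $\alpha$. The real-symmetry of $A$ is what rules out a thin boundary layer of the wrong sign near the axis — each homogeneous piece of $\im A$ vanishes to first order on $\R$ with the correct slope $A'(x)>0$ — and the Liouville reduction is what I would use to make the angle bookkeeping uniform across the whole range $\gamma>-2$, where the decisive inequality is exactly $(\gamma+2)\alpha<\pi$.
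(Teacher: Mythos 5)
Your proposal is correct and is essentially the paper's own proof: Kneser's theorem supplies the large real zero $x_1$, your sign claim $\im\bigl(e^{2i\varphi}A(x_1+te^{i\varphi})\bigr)>0$ coincides (after multiplication by $t^2>0$) with the paper's inequality \eqref{y5}, and your functional $W(t)=\im\bigl(g'(t)\overline{g(t)}\bigr)$ is exactly the quantity whose derivative the paper integrates from $t=0$ to $t=1$ between a conjugate pair of zeros $\xi,\overline{\xi}$, so your monotonicity argument is just the differential form of the paper's Lommel identity, resting on the same use of the reflection symmetry of $A$ to make the error in $\arg A(z)=(\gamma+o(1))\arg z$ multiplicative near the axis. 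The one genuine difference is your Liouville-transformation patch for $-2<\gamma<0$ with $\alpha>\pi/2$: there the inequality \eqref{y5} can actually fail (for $|z|\ll x_1$ and $\arg z>\pi/2$ one has $\arg\bigl(z^2A(z+x_1)\bigr)\approx 2\arg z>\pi$), a case the paper's deduction passes over silently, so your extra step — though only sketched, and never needed in the paper's application, where $\gamma=1$ — covers a regime the paper's argument as written does not.
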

\begin{proof}
A classical result of Kneser~\cite[Section~6]{Kneser1893} 
implies that $u$ has arbitrarily large positive zeros.
(Kneser's result says that this is the case if there exists $\delta>0$
such that $x^2A(x)\geq 1/4+\delta$ for all large~$x$.)

It follows from~\eqref{y1} that $\arg A(z)=\gamma\arg z+o(1)$
as $z\to\infty$. 
Since $\arg A(x)=\gamma\arg x=0$ for $x>r$ this actually implies that
\begin{equation} \label{y3}
\arg A(z)=(\gamma+o(1))\arg z
\quad\text{as}\ z\to\infty,\, z\in S.
\end{equation}
If $x_1$ is large and $|\arg z|<\alpha$, then $|x_1+z|$ is also large.
Thus~\eqref{y3} yields that
\begin{equation} \label{y4}
\arg\!\left( z^2 A(z+x_1)\right)=2\arg z + (\gamma+o(1))\arg (z+x_1)
\end{equation}
as $x_1\to\infty$.
Since $|\arg (z+x_1)|<|\arg z|$, 
it now follows from the hypotheses $\gamma>-2$ and  $(2+\gamma)\alpha<\pi$ that 
\begin{equation} \label{y5}
\im\!\left( z^2 A(z+x_1)\right)>0 
\quad\text{for}\ z\in S\ \text{with}\ \im z>0,
\end{equation}
provided $x_1$ is sufficiently large.

Put $v(z):=u(x_1+z)$ and $B(z):=A(x_1+z)$,
with a large zero $x_1$ of~$u$.
Then $v''+Bv=0$ and $v(0)=0$.
Let $a,b\in S-x_1=\{z-x_1\colon z\in S\}$. Then
\begin{equation} \label{y6}
\begin{aligned}
\frac{d}{dt}\left( a\,v'(at)v(bt)-b\,v(at)v'(bt) \right)
&=a^2 v''(at)v(bt)- b^2 v(at)v''(bt)
\\ &
= - \left(a^2 B(at)-b^2 B(bt) \right) v(at)v(bt) .
\end{aligned}
\end{equation}
Let now $\xi\in S_1:= \{z\colon |\arg(z-x_1)|<\alpha\}$ be a non-real zero of~$u$.
Then $\overline{\xi}$ is also a zero of~$u$.  We may assume that $\im \xi>0$.
With $a:=\xi-x_1\in S$ and $b:=\overline{\xi}-x_1\in S$
we have $v(a)=v(b)=0$.
It follows from~\eqref{y6} that
\begin{equation} \label{y7}
\begin{aligned} 
0
&= \int_0^1\left(a^2 B(at)-b^2 B(bt) \right) v(at)v(bt) dt
\\ &
=2i\int_0^1 \im\!\left(a^2 B(at)\right) |v(at)|^2 dt.
\end{aligned} 
\end{equation}
By~\eqref{y5} we have 
\begin{equation} \label{y8}
\im\!\left(a^2 B(at)\right)=\frac{1}{t^2} \im\!\left((ta)^2 A(at+x_1)\right)>0.
\end{equation}
This contradicts~\eqref{y7}.
Thus all zeros in the sector $S_1$ lie on the positive axis.
\end{proof}

\begin{remark} \label{remark1}
Considering $u(-z)$ instead of $u(z)$ we see that
Lemma~\ref{lemma14} remains valid if we put
$S:=\{z\colon |z|>r ,\, |\arg z-\pi|\leq \alpha\}$
and assume that there exists $c>0$ such that 
$A(z)\sim -cz^\gamma$ as $z\to\infty$ in~$S$. 
\end{remark}

\begin{proof}[Proof of Theorem~\ref{theorem3}]
Suppose first that there exist $a_1$, $a_2$ and $a_3$ such that 
all but finitely many $a_j$-points are on the ray $L^j$.
We may assume that $a_1=0$, $a_2=1$ and $a_3=\infty$, since otherwise we can replace
$f$ by $L\circ f$ with a suitable linear fractional transformation~$L$.
We switch to the notation previously used by putting
$L_0=L^1$, $L_1=L^2$ and $L_\infty=L^3$.

Let
\begin{equation} \label{defn}
n(r):=n(r,0)+n(r,1)+n(r,\infty).
\end{equation}
and let $\rho_*$ and $\rho^*$ be defined by~\eqref{rho1} and~\eqref{rho2},
with $g(r)$ replaced by~$n(r)$.

Since $f$ has at most two Borel exceptional
values~\cite[Chapter~3, Theorem~2.2]{Goldberg2008},
the order of $n(r)$ is equal to that of~$f$.
Since $f$ has order at least~$1$ by Proposition~\ref{proposition1},
Lemma~\ref{lemma6} yields that $\rho^*\geq 1$.

For a sequence $(r_k)$ tending to $\infty$, we consider the sequence $(f_k)$ 
defined by $f_k(z)=f(r_k z)$.
We will prove the following: 
\begin{itemize}
\item[$(a)$]
If $(f_k)$ is normal in  $\C\setminus\{0\}$,
then $(r_k)$ has a subsequence which is 
a sequence of P\'olya peaks of order $0$ for $n(r)$.
\item[$(b)$]
$\rho_*\leq 3/2$.
\item[$(c)$]
If $(r_k)$ is a sequence of P\'olya peaks for $n(r)$ of finite non-zero order $\lambda$,
then $\lambda=3/2$.
\end{itemize}
Since $\rho^*\geq 1$ we can deduce from $(b)$, $(c)$ and
Lemma~\ref{lemma6} that $\rho_*=\rho^*=3/2$.
This implies that $n(r)$ and hence $f$ have order~$3/2$.
Moreover, it follows from $(a)$ that if $(r_k)$ is a sequence tending to~$\infty$,
then the sequence $(f_k)$ cannot be normal in $\C\setminus\{0\}$.

To prove $(a)$, let $(f_k)$ be normal in  $\C\setminus\{0\}$.
Passing to a subsequence if necessary, we may assume that $(f_k)$ converges, say
$f_k(z)\to \phi(z)$ in $\C\setminus\{0\}$.
Let $0<\varepsilon<1$ and let $K_\varepsilon$ be the number of zeros, $1$-points 
and poles of $\phi$ in $\{z\colon \varepsilon/2<|z|<2/\varepsilon\}$.
For large $k$ we then have 
\begin{equation} \label{b7a}
n(r_k/\varepsilon) - n(\varepsilon r_k)\leq K_\varepsilon .
\end{equation}
For $\varepsilon\leq t\leq 1/\varepsilon$ and large $k$ it follows that 
\begin{equation} \label{b7}
n(t r_k) \leq n(r_k/\varepsilon)
\leq n(\varepsilon r_k)+K_\varepsilon
\leq n(r_k)+K_\varepsilon
\leq (1+\varepsilon) n(r_k)
\end{equation}
as well as
\begin{equation} \label{b7b}
n(t r_k) \geq n(\varepsilon r_k)
\geq n(r_k/\varepsilon )-K_\varepsilon
\geq n(r_k)-K_\varepsilon
\geq (1-\varepsilon) n(r_k) .
\end{equation}
Thus $(r_k)$ is a sequence of P\'olya peaks for $n(r)$ 
of order~$0$ of both the first and second kind.

To prove $(b)$,
we note that $f$ has order at least~$1$
and thus is not normal by Ostrowski's result~\cite{Ostrowski1925}.
Hence there exists a sequence $(r_k)$  such that $(f_k)$ is not normal in  $\C\setminus\{0\}$.
We will proceed as in the proof of Theorem~\ref{theorem1}, but
this time $S_0$ will be the sector in $\C$ which is opposite to~$L_0$, 
and not its intersection with the annulus~$A$.
Similarly, $S_1$ and $S_\infty$ are sectors in~$\C$, and so are the sectors
$T_a$, $S_a'$ and $S_a''$ with $a\in\{0,1,\infty\}$.
For example, $T_1:=S_0\cup S_\infty\cup L_1\setminus\{0\}$.
As the rays $L_0$, $L_1$ and $L_\infty$
are equally spaced, the sectors $S_0$, $S_1$ and $S_\infty$ have opening
angles $2\pi/3$.

As in the proof of Theorem~\ref{theorem1} we define
$u_k$, $v_k$ and $w_k$ by~\eqref{a10}, \eqref{a12} and~\eqref{a15}.
Passing to a subsequence of $(r_k)$ if necessary we find as
in the proof of Theorem~\ref{theorem1} that these sequences
converge in the appropriate sectors; that is,
we have~\eqref{a11}, \eqref{a13} and~\eqref{a16}.
With $h$ defined by~\eqref{a18} we find again that~\eqref{a19} holds. 

Lemma~\ref{lemma12} yields that $u$ has the form
$u(z)=\re (e^{i\tau}(az^{3/2}+b/z^{3/2}))$ where $a,b,\tau\in\R$ with $a,b\geq 0$.
Since
\begin{equation} \label{a10a}
u_k(\xi/r_k)= \frac{\log|f_k(\xi/r_k)|}{\log|f_k(\xi/r_k)|}
= \frac{\log|f(\xi)|}{\log|f(r_k \xi)|} \to 0
\end{equation}
we deduce that $b=0$.
This implies that $h$ has the form
\begin{equation} \label{a10b}
h(z)=\re (cz^3)
\end{equation}
for some $c\in\C\setminus\{0\}$.

It follows from~\eqref{a11} and~\eqref{a10b} 
there exists a sequence $(c_k)$ in $\C$ such that
\begin{equation} \label{a11a}
\log f(r_k z)\sim c_k z^{3/2}
\quad\text{for}\ z\in T_1.
\end{equation}
Now $f(r_k z)=1$ if and only if $\log f(r_k z)=2\pi i m$ for some $m\in \Z$.
This implies that if $0<\delta<\varepsilon<1$, then
\begin{equation} \label{a11b}
n(t r_k , 1) - n(\delta r_k, 1) \sim \frac{|c_k|}{2\pi} \left( t^{3/2}-\delta^{3/2}\right) 
\quad\text{for}\  \varepsilon \leq t\leq\frac{1}{\varepsilon}.
\end{equation}
Putting 
\begin{equation} \label{a11c}
a_k=  n(\delta r_k, 1)-\frac{|c_k|\delta^{3/2}}{2\pi} 
\quad\text{and}\quad
b_k=\frac{|c_k|}{2\pi} 
\end{equation}
we find that there exists a sequence $(\varepsilon_k)$ tending to $0$ such that 
\begin{equation} \label{d1}
n(t r_k,1) - a_k \sim b_k t^{3/2}
\quad\text{for}\  \varepsilon_k \leq t\leq\frac{1}{\varepsilon_k}.
\end{equation}
The same reasoning can be made for zeros and poles and this yields that 
\begin{equation} \label{d2}
n(t r_k) - A_k \sim B_k t^{3/2}
\quad\text{for}\  \varepsilon_k \leq t\leq\frac{1}{\varepsilon_k}
\end{equation}
for suitable $A_k,B_k\in\R$ with $B_k>0$.
Noting that $n(\varepsilon_k r_k)\geq 0$ we deduce from~\eqref{d2} that 
\begin{equation} \label{d2a}
A_k\geq -(1+o(1))B_k\varepsilon_k^{3/2}.
\end{equation}
Together with~\eqref{d2} this implies that 
if $1 \leq t\leq 1/\varepsilon_k$, then
\begin{equation} \label{d2b}
\begin{aligned}
2t^{3/2} n(r_k)-n(tr_k) 
&= 2 t^{3/2} \left( A_k +(1+o(1)) B_k\right) - A_k -(1+o(1)) t^{3/2} B_k
\\
&= (2 t^{3/2} -1) A_k +(1+o(1)) t^{3/2} B_k
\\ &
\geq \left(
- (1+o(1))( 2 t^{3/2} -1) \varepsilon_k^{3/2}
+(1+o(1)) t^{3/2}
\right)B_k
\\ &
\geq 0 
\end{aligned}
\end{equation}
for large~$k$.
Thus
\begin{equation} \label{d2c}
\frac{n(tr_k)}{t^{3/2} n(r_k)}\leq 2
\quad\text{for}\  1 \leq t\leq\frac{1}{\varepsilon_k}
\end{equation}
for large~$k$.
This implies that $\rho_*\leq 3/2$.

To prove $(c)$, let $(r_k)$ be a sequence of P\'olya peaks (of the first kind)
for $n(r)$ of order $\lambda>0$. It follows from $(a)$ that 
$(f_k)$
is not normal. Thus we may assume that~\eqref{d2} holds.

Let $M>1>\varepsilon>0$. By the definition of P\'olya peaks we have 
\begin{equation} \label{d3}
n(\varepsilon r_k)\leq (1+\varepsilon) \varepsilon^\lambda n(r_k) ,
\end{equation}
for large~$k$.  Together with~\eqref{d2} this yields that
\begin{equation} \label{d4}
\begin{aligned}
(1-\varepsilon) B_k \varepsilon^{3/2}
&\leq n(\varepsilon r_k) - A_k
\\ &
\leq (1+\varepsilon)\varepsilon^\lambda n(r_k) - A_k
\\ &
\leq (1+\varepsilon)\varepsilon^\lambda \left( A_k+(1+\varepsilon)
B_k \right) -A_k .
\end{aligned}
\end{equation}
Hence
\begin{equation} \label{d5}
\left(1- (1+\varepsilon)\varepsilon^\lambda \right)A_k 
\leq \left((1+\varepsilon)^2 \varepsilon^\lambda -(1-\varepsilon)\varepsilon^{3/2}\right) B_k.
\end{equation}
Similarly,
\begin{equation} \label{d6}
\left(1- (1+\varepsilon)M^\lambda \right)A_k 
\leq \left((1+\varepsilon)^2 M^\lambda -(1-\varepsilon)M^{3/2}\right) B_k .
\end{equation}
The last two inequalities imply  that 
\begin{equation} \label{d7}
\frac{(1+\varepsilon)^2 \varepsilon^\lambda -(1-\varepsilon)\varepsilon^{3/2}}{1- (1+\varepsilon)\varepsilon^\lambda}
\geq
\frac{A_k}{B_k}
\geq
\frac{(1-\varepsilon)M^{3/2}-(1+\varepsilon)^2 M^\lambda }{(1+\varepsilon)M^\lambda-1} .
\end{equation}
Suppose now that $\lambda<3/2$. Then for small $\varepsilon$ the left hand side
is less than~$1$, while for large $M$ the right hand side is greater than~$1$.
This is a contradiction. This implies that there are no P\'olya peaks of 
the first kind of order less than~$3/2$.

The same arguments arguments can be made for P\'olya peaks of the second kind.
This yields there are no  P\'olya peaks of the second kind of order greater than~$3/2$.
Lemma~\ref{lemma6} yields that $\rho^*=\rho_*=3/2$, 
meaning that all P\'olya peaks of the first or second kind have order $3/2$.
This completes the proof of~$(c)$.

As explained above, it follows from $(a)$, $(b)$ and $(c)$ 
that if $(r_k)$ tends to~$\infty$, then $(f_k)$ is not normal  in $\C\setminus\{0\}$.
Moreover, $f$ and $n(r)$ have order~$3/2$. 

Next we show that $f$ has only finitely many critical points; that is,
$f'$ has only finitely many zeros and $f$ has only finitely many multiple poles.
Suppose that  $f$ has infinitely many critical points.
Then one of the sectors $T_0$, $T_1$ and $T_\infty$ contains a closed subsector
which contains infinitely many critical points. 
Without loss of generality we may assume that this holds for~$T_1$; 
 say $(z_k)$ is a sequence of critical points contained in a closed
subsector $T_1'$ of $T_1$ such that $r_k:=|z_k|\to\infty$.
As the sequence $(f_k)$ is not normal, we may assume that~\eqref{a11a} holds.
Differentiating we obtain 
\begin{equation} \label{d8}
\frac{r_k f'(r_k z)}{f(r_k z)}\sim \frac32c_k z^{1/2}
\quad\text{for}\ z\in T_1.
\end{equation}
This contradicts the assumption that $T_1'$ contains a critical point of 
modulus~$r_k$.
Hence $f$ has only finitely many critical points. This implies that 
the Schwarzian $S(f)$ has only finitely many poles so that $N(r,S(f))=\O(\log r)$.

Since $f$ has finite order, the lemma on the logarithmic
derivative (see~\cite[Section~3.1]{Goldberg2008} or~\cite[Section~2.2]{Hayman1964})
yields that $m(r,S(f))=\O(\log r)$. It follows that 
\begin{equation} \label{d9}
T(r,S(f))=N(r,S(f))+m(r,S(f))=\O(\log r)
\end{equation}
so that $S(f)$ is rational.

Let $Q:=S(f)$. 
Since $f$ has order $3/2$, Lemma~\ref{lemma9} yields that there exists 
$a\in\C\setminus\{0\}$ such that $Q(z)\sim a z$ as $z\to\infty$.
With out loss of generality we may assume that $a$ is negative, say
$a=-c$ with $c>0$.

Lemma~\ref{lemma11} implies that the set $\{L^1,L^2,L^3\}$ of rays 
considered there coincides with the set $\{L_0,L_1,L_\infty\}$.
As $L^2$ is the negative real axis, Lemma~\ref{lemma8} implies that $Q$ is real.

Let $\omega=e^{2\pi i/3}$ and put $f_1(z):=f(\omega z)$.
Then $S(f_1)(z)=\omega^2 Q(\omega z)$.
Lemma~\ref{lemma8} implies that $\omega^2 Q(\omega z)$ is also real.

Writing 
\begin{equation} \label{d15}
Q(z)=-cz+\sum_{j=-\infty}^0 c_jz^j
\end{equation}
we have
\begin{equation} \label{d14}
\omega^2 Q(\omega z)=-cz+\sum_{j=-\infty}^0 c_j \omega^{2+j}z^j .
\end{equation}
It follows that both $c_j$ and $c_j \omega^{2+j}$ are real for all $j\leq 0$.
This implies that $c_j=0$ if $c\neq 1 \pmod 3$.
Hence $Q$ has the form $Q(z)=-z R(z^3)$ where $R(\infty)=c>0$.
Thus $f$ satisfies~\eqref{i3}.

It remains to prove the converse direction. Thus suppose 
that $R$ is a real rational functions
satisfying $0<R(\infty)<\infty$ and that~\eqref{i3} has a meromorphic solution. 
Then, as remarked after Lemma~\ref{lemma11},
the equation~\eqref{i3} also has a meromorphic solution $f$ with the 
asymptotic values $0$, $1$ and~$\infty$.

Without loss of generality we may assume that $e^{3\theta i}=-1$.
Putting $Q(z):=-zR(z^3)$ we thus have $S(f)=Q$.
In view of Lemma~\ref{lemma11} we may assume without loss of generality 
that all but finitely many $1$-points of $f$ are contained in a small 
sector bisected by $L^2=(-\infty,0]$.

The functions $\overline{f(\overline{z})}$ and $1/f(z)$ have the same asymptotic
values in the sectors~$V_j$.
Since both functions have Schwarzian derivative~$Q$, and thus by Lemma~\ref{lemma11}
differ only by a linear fractional transformation, 
this yields that they are actually equal; that is,
\begin{equation} \label{d16}
\frac{1}{f(z)}=\overline{f(\overline{z})}.
\end{equation}
It follows from~\eqref{d16} that
the $1$-points of $f$ are symmetric with respect to the real axis.

We may write $f=w_1/w_2$ where the $w_j$ satisfy $w_j''+Aw_j=0$ with $A=Q/2$.
We have $f=1$ if and only if $w:=w_1-w_2=0$.
Thus the zeros of $w$ are also symmetric with respect to the real axis.
This implies that $\overline{w(\overline{z})}=cw(z)$ where $c=e^{i\gamma}$
for some~$\gamma\in\R$. Thus $u:=e^{i\gamma/2}w$ is real on the real axis.
Choosing $\alpha<\pi/3$ we deduce from Lemma~\ref{lemma14} and
Remark~\ref{remark1} all but finitely many zeros of $u$ are negative.

It follows that all but finitely many $1$-points are contained in the negative real 
axis~$L^2$. The proof that the other two rays $L^1$ and $L^3$ contain all but finitely
many 
zeros and poles follows with the same argument.
\end{proof}
\begin{remark}\label{Rpoles}
The main objective of the papers of Nevanlinna~\cite{Nevanlinna1932} and 
Elf\-ving~\cite{Elfving1934} cited above
was to study Riemann surfaces with finitely many branch points. They showed that
such surfaces correspond to meromorphic functions with rational Schwarzian derivative.

Elfving described such surfaces (and functions) in terms of \emph{line complexes}
(also called \emph{Speiser graphs}).
We do not give the definition of a line complex here, but refer
to~\cite[Section~ 7.4]{Goldberg2008} and~\cite[Section XI.2]{Nevanlinna1953}.
Two line complexes are sketched in Figure~\ref{line-complex}.
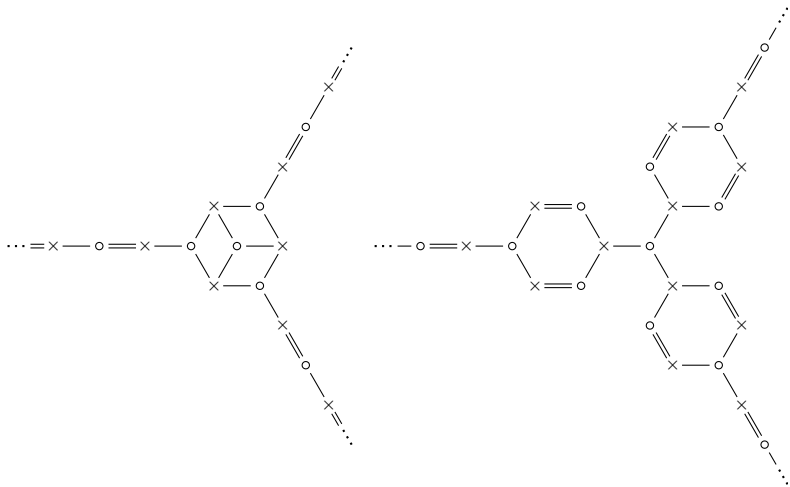
\begin{figure}[!htb]
\captionsetup{width=.85\textwidth}
\centering
\begin{tikzpicture}[scale=0.61,>=latex](-5,-5)(5,5)
\clip (-5,-5.2) rectangle (12.0,5.2);
\draw (0,0) circle (0.08);
\draw (-1,0) circle (0.08);
\draw (-3,0) circle (0.08);
\draw[-] (-2+0.2,0) -- (-2+0.8,0);
\draw[-] (-3+0.2,0.04) -- (-3+0.8,0.04);
\draw[-] (-3+0.2,-0.04) -- (-3+0.8,-0.04);
\draw[-] (-4+0.2,0) -- (-4+0.8,0);
\draw[-] (-5+0.5,0.04) -- (-5+0.8,0.04);
\draw[-] (-5+0.5,-0.04) -- (-5+0.8,-0.04);
\draw[-,thick,dotted] (-5,0) -- (-5+0.43,0);
\node at (-2,0) {\tiny $\times$};
\node at (-4,0) {\tiny $\times$};
\node at (-0.5,0.866) {\tiny $\times$};
\node at (-0.5,-0.866) {\tiny $\times$};
\draw[-] (-1+0.2*0.5,0.2*0.866) -- (-1+0.8*0.5,0.8*0.866);
\draw[-] (-1+0.2*0.5,-0.2*0.866) -- (-1+0.8*0.5,-0.8*0.866);
\draw[-] (-0.2*0.5,0.2*0.866) -- (-0.8*0.5,0.8*0.866);
\begin{scope}[shift={(0,0)},rotate=120]
\draw (-1,0) circle (0.08);
\draw (-3,0) circle (0.08);
\draw[-] (-2+0.2,0) -- (-2+0.8,0);
\draw[-] (-3+0.2,0.04) -- (-3+0.8,0.04);
\draw[-] (-3+0.2,-0.04) -- (-3+0.8,-0.04);
\draw[-] (-4+0.2,0) -- (-4+0.8,0);
\draw[-] (-5+0.5,0.04) -- (-5+0.8,0.04);
\draw[-] (-5+0.5,-0.04) -- (-5+0.8,-0.04);
\draw[-,thick,dotted] (-5,0) -- (-5+0.43,0);
\draw[-] (-1+0.2*0.5,0.2*0.866) -- (-1+0.8*0.5,0.8*0.866);
\draw[-] (-1+0.2*0.5,-0.2*0.866) -- (-1+0.8*0.5,-0.8*0.866);
\draw[-] (-0.2*0.5,0.2*0.866) -- (-0.8*0.5,0.8*0.866);
\end{scope}
\begin{scope}[shift={(0,0)},rotate=240]
\draw (-1,0) circle (0.08);
\draw (-3,0) circle (0.08);
\draw[-] (-2+0.2,0) -- (-2+0.8,0);
\draw[-] (-3+0.2,0.04) -- (-3+0.8,0.04);
\draw[-] (-3+0.2,-0.04) -- (-3+0.8,-0.04);
\draw[-] (-4+0.2,0) -- (-4+0.8,0);
\draw[-] (-5+0.5,0.04) -- (-5+0.8,0.04);
\draw[-] (-5+0.5,-0.04) -- (-5+0.8,-0.04);
\draw[-,thick,dotted] (-5,0) -- (-5+0.43,0);
\draw[-] (-1+0.2*0.5,0.2*0.866) -- (-1+0.8*0.5,0.8*0.866);
\draw[-] (-1+0.2*0.5,-0.2*0.866) -- (-1+0.8*0.5,-0.8*0.866);
\draw[-] (-0.2*0.5,0.2*0.866) -- (-0.8*0.5,0.8*0.866);
\end{scope}
\node at (1,0) {\tiny $\times$};
\node at (4*0.5,4*0.866) {\tiny $\times$};
\node at (4*0.5,-4*0.866) {\tiny $\times$};
\node at (1,1.732) {\tiny $\times$};
\node at (1,-1.732) {\tiny $\times$};
\draw (9-0,0) circle (0.08);
\node at (9-2.5,0.866) {\tiny $\times$};
\node at (9-2.5,-0.866) {\tiny $\times$};
\node at (9-1,0) {\tiny $\times$};
\node at (9-4,0) {\tiny $\times$};
\begin{scope}[shift={(9,0)},rotate=0]
\draw (-3,0) circle (0.08);
\draw (-5,0) circle (0.08);
\draw (-1.5,0.866) circle (0.08);
\draw (-1.5,-0.866) circle (0.08);
\draw[-] (-1-0.2*0.5,0.2*0.866) -- (-1-0.8*0.5,0.8*0.866);
\draw[-] (-3+0.2*0.5,0.2*0.866) -- (-3+0.8*0.5,0.8*0.866);
\draw[-] (-1-0.2*0.5,-0.2*0.866) -- (-1-0.8*0.5,-0.8*0.866);
\draw[-] (-3+0.2*0.5,-0.2*0.866) -- (-3+0.8*0.5,-0.8*0.866);
\draw[-] (-1+0.2,0) -- (-1+0.8,0);
\draw[-] (-5+0.2,0.04) -- (-5+0.8,0.04);
\draw[-] (-5+0.2,-0.04) -- (-5+0.8,-0.04);
\draw[-] (-4+0.2,0) -- (-4+0.8,0);
\draw[-] (-6+0.5,0) -- (-6+0.8,0);
\draw[-,thick,dotted] (-6,0) -- (-6+0.43,0);
\draw[-] (-2.5+0.2,0.866+0.04) -- (-2.5+0.8,0.866+0.04);
\draw[-] (-2.5+0.2,0.866-0.04) -- (-2.5+0.8,0.866-0.04);
\draw[-] (-2.5+0.2,-0.866+0.04) -- (-2.5+0.8,-0.866+0.04);
\draw[-] (-2.5+0.2,-0.866-0.04) -- (-2.5+0.8,-0.866-0.04);
\end{scope}
\begin{scope}[shift={(9,0)},rotate=120]
\draw (-3,0) circle (0.08);
\draw (-5,0) circle (0.08);
\draw (-1.5,0.866) circle (0.08);
\draw (-1.5,-0.866) circle (0.08);
\draw[-] (-1-0.2*0.5,0.2*0.866) -- (-1-0.8*0.5,0.8*0.866);
\draw[-] (-3+0.2*0.5,0.2*0.866) -- (-3+0.8*0.5,0.8*0.866);
\draw[-] (-1-0.2*0.5,-0.2*0.866) -- (-1-0.8*0.5,-0.8*0.866);
\draw[-] (-3+0.2*0.5,-0.2*0.866) -- (-3+0.8*0.5,-0.8*0.866);
\draw[-] (-1+0.2,0) -- (-1+0.8,0);
\draw[-] (-5+0.2,0.04) -- (-5+0.8,0.04);
\draw[-] (-5+0.2,-0.04) -- (-5+0.8,-0.04);
\draw[-] (-4+0.2,0) -- (-4+0.8,0);
\draw[-] (-6+0.5,0) -- (-6+0.8,0);
\draw[-,thick,dotted] (-6,0) -- (-6+0.43,0);
\draw[-] (-2.5+0.2,0.866+0.04) -- (-2.5+0.8,0.866+0.04);
\draw[-] (-2.5+0.2,0.866-0.04) -- (-2.5+0.8,0.866-0.04);
\draw[-] (-2.5+0.2,-0.866+0.04) -- (-2.5+0.8,-0.866+0.04);
\draw[-] (-2.5+0.2,-0.866-0.04) -- (-2.5+0.8,-0.866-0.04);
\end{scope}
\begin{scope}[shift={(9,0)},rotate=240]
\draw (-3,0) circle (0.08);
\draw (-5,0) circle (0.08);
\draw (-1.5,0.866) circle (0.08);
\draw (-1.5,-0.866) circle (0.08);
\draw[-] (-1-0.2*0.5,0.2*0.866) -- (-1-0.8*0.5,0.8*0.866);
\draw[-] (-3+0.2*0.5,0.2*0.866) -- (-3+0.8*0.5,0.8*0.866);
\draw[-] (-1-0.2*0.5,-0.2*0.866) -- (-1-0.8*0.5,-0.8*0.866);
\draw[-] (-3+0.2*0.5,-0.2*0.866) -- (-3+0.8*0.5,-0.8*0.866);
\draw[-] (-1+0.2,0) -- (-1+0.8,0);
\draw[-] (-5+0.2,0.04) -- (-5+0.8,0.04);
\draw[-] (-5+0.2,-0.04) -- (-5+0.8,-0.04);
\draw[-] (-4+0.2,0) -- (-4+0.8,0);
\draw[-] (-6+0.5,0) -- (-6+0.8,0);
\draw[-,thick,dotted] (-6,0) -- (-6+0.43,0);
\draw[-] (-2.5+0.2,0.866+0.04) -- (-2.5+0.8,0.866+0.04);
\draw[-] (-2.5+0.2,0.866-0.04) -- (-2.5+0.8,0.866-0.04);
\draw[-] (-2.5+0.2,-0.866+0.04) -- (-2.5+0.8,-0.866+0.04);
\draw[-] (-2.5+0.2,-0.866-0.04) -- (-2.5+0.8,-0.866-0.04);
\end{scope}
\node at (9+0.5,-3*0.866) {\tiny $\times$};
\node at (9+0.5,+3*0.866) {\tiny $\times$};
\node at (9+0.5,0.866) {\tiny $\times$};
\node at (9+0.5,-0.866) {\tiny $\times$};
\node at (9+2,3.464) {\tiny $\times$};
\node at (9+2,-3.464) {\tiny $\times$};
\node at (9+2,-1.732) {\tiny $\times$};
\node at (9+2,1.732) {\tiny $\times$};
\end{tikzpicture}
\caption{Two line complexes.}
\label{line-complex}
\end{figure}
The left one was also considered by Elfving~\cite[Section~2, Abb.~3]{Elfving1934}.
The function corresponding to this line complex 
has three logarithmic singularities and three critical points,
and the critical values corresponding to these three critical points
coincide with the three logarithmic singularities.

Elfving~\cite[Section~47]{Elfving1934} 
considered how symmetry of the line complex is reflected in the function;
see also~\cite[Section~42]{Nevanlinna1932}.
For the line complexes given in Figure~\ref{line-complex}, and the associated meromorphic
functions~$f$, it follows~\cite[p.~59]{Elfving1934} that 
$S(f)$ has the form~\eqref{i3} with rational functions~$R$
satisfying $R(\infty)\in\C\setminus\{0\}$.
In addition, the mirror symmetry of the line complexes implies that $R$ is real.

For the left line complex in Figure~\ref{line-complex},
the function $f$ has only three (simple) critical points.
Hence $S(f)$ has three (double) poles. Thus $R$ has only one (double) pole $p$ and hence
the form 
\begin{equation} \label{d17}
R(z) =-c +\frac{a}{z-p}+\frac{b}{(z-p)^2}.
\end{equation}
We recall that Elfving~\cite[Kapitel IV]{Elfving1934} determined 
for which rational functions $Q$ the equation $S(f)=Q$ has a meromorphic solution~$f$.
It can be deduced from his result that if $R$ is given by~\eqref{d17},
then~\eqref{i3} has a meromorphic solution
if and only if $b=-27p/2$ and $c=(4a^2+36a+45)/72p$.

We may assume that $-c=R(\infty)<0$ and that $f$ has logarithmic singularities over
$0$, $1$ and~$\infty$, with the $1$-points close to the negative real axis,
corresponding to the branch of the line complex which extends to the left.
The simple $1$-points then correspond to the double edges of the line complex on
this branch, and there is one double $1$-point corresponding to the diamond 
at the end of this branch.
Since $1$-points are symmetric with respect to the real axis,
it follows that all $1$-points must lie on the negative real axis.

Thus there are rational functions $R$ with poles such that~\eqref{i3} has a solution $f$ 
for which all (and not only all but finitely many) zeros, $1$-points and poles lie on three rays.

For the right line complex in Figure~\ref{line-complex} the situation is different.
Assume again that the $1$-points are distributed along the negative real axis,
corresponding to the branch of the line complex which extends to the left.
The center of the hexagon on this branch corresponds to a negative $1$-point.
However, there are also further $1$-points corresponding to double
edges of the hexagons on the other branches. So it may happen that not all but only 
all but finitely many zeros, $1$-points and poles lie on the rays.

Putting more than one hexagon on the branches stretching to~$\infty$, or replacing the
hexagons by $(4n+2)$-gons for some $n>1$, we find that
the rational function $R$ in~\eqref{i3} may have arbitrarily high degree.
\end{remark}
\begin{remark}\label{remark2}
In the proof of Theorem~\ref{theorem3}, we have used Lemma~\ref{lemma14}
to prove that zeros, $1$-points and poles are on the respective rays.
Alternatively, we could have used
the symmetry of the associated line complex, similarly to the 
reasoning in Remark~\ref{Rpoles}.
\end{remark}

\noindent
Mathematisches Seminar\\
Christian-Albrechts-Universit\"at zu Kiel\\
Ludewig-Meyn-Str.\ 4\\
24098 Kiel\\
Germany\\
{\tt Email: bergweiler@math.uni-kiel.de}

\medskip

\noindent
Department of Mathematics\\
Purdue University\\
West Lafayette, IN 47907\\
USA\\
{\tt Email: eremenko@math.purdue.edu}

\end{document}